\patchcmd{\abstract}{\scshape\abstractname}{\textbf{\abstractname}}{}{}
\DeclareMathAlphabet{\mathcal}{OMS}{cmsy}{m}{n}
\DeclareSymbolFont{operators}{OT1}{ztmcm}{m}{n}
\DeclareSymbolFont{letters}{OML}{ztmcm}{m}{it}
\DeclareSymbolFont{symbols}{OMS}{ztmcm}{m}{n}
\DeclareSymbolFont{largesymbols}{OMX}{ztmcm}{m}{n}
\DeclareSymbolFont{bold}{OT1}{ptm}{bx}{n}
\DeclareSymbolFont{italic}{OT1}{ptm}{m}{it}
\DeclareMathSymbol{\omicron}{0}{operators}{`\o}
\DeclareMathAlphabet{\mathpzc}{OT1}{pzc}{m}{it}
\DeclareSymbolFont{operators}{OT1}{txr}{m}{n}
\def\operator@font{\mathgroup\symoperators}
\DeclareSymbolFont{italic}{OT1}{txr}{m}{it}
\DeclareSymbolFontAlphabet{\mathrm}{operators}
\DeclareMathAlphabet{\mathbf}{OT1}{txr}{bx}{n}
\DeclareMathAlphabet{\mathit}{OT1}{txr}{m}{it}
\SetMathAlphabet{\mathit}{bold}{OT1}{txr}{bx}{it}
\DeclareSymbolFont{letters}{OML}{txmi}{m}{it}
\DeclareSymbolFont{lettersA}{U}{txmia}{m}{it}
\DeclareSymbolFontAlphabet{\mathfrak}{lettersA}
\DeclareSymbolFont{symbols}{OMS}{txsy}{m}{n}
\renewcommand\abstractname{\scshape\bfseries Abstract}
\renewenvironment{proof}[1][\proofname]{\par \pushQED{\qed} \normalfont
  \topsep6\p@\@plus6\p@ \trivlist \itemindent\z@
  \item[\hskip\labelsep\bfseries
    #1\@addpunct{.}]\ignorespaces
}{
  \popQED\endtrivlist\@endpefalse
}
    \renewcommand{\theequation}{{\thesection}.\@arabic\c@equation} 
\def\section{\@ifstar\unnumberedsection\numberedsection}
\def\numberedsection{\@ifnextchar[
  \numberedsectionwithtwoarguments\numberedsectionwithoneargument}
\def\unnumberedsection{\@ifnextchar[
  \unnumberedsectionwithtwoarguments\unnumberedsectionwithoneargument}
\def\numberedsectionwithoneargument#1{\numberedsectionwithtwoarguments[#1]{#1}}
\def\unnumberedsectionwithoneargument#1{\unnumberedsectionwithtwoarguments[#1]{#1}}
\def\numberedsectionwithtwoarguments[#1]#2{%
  \ifhmode\par\fi
  \removelastskip
  \vskip 4ex\goodbreak
  \refstepcounter{section}%
  \noindent
  \begingroup
  \leavevmode\centering\scshape\bfseries
  \thesection.
  #2
  \par
  \endgroup
  \vskip 1ex\nobreak
  \addcontentsline{toc}{section}{%
    \protect\numberline{\thesection}%
    #1}%
  }
\def\unnumberedsectionwithtwoarguments[#1]#2{%
  \ifhmode\par\fi
  \removelastskip
  \vskip 2ex\goodbreak
  \noindent
  \begingroup
  \leavevmode\centering\scshape\bfseries
  \leavevmode\centering\scshape\bfseries
  #2
  \par
  \endgroup
  \vskip 1ex\nobreak
  \addcontentsline{toc}{section}{%
    #1}%
}
\def\@seccntformat#1{\csname mythe#1\endcsname}
\let\latex@subsection\subsection
\def\subsection{\@ifstar{\refstepcounter{subsection}\latex@subsection*}{\latex@subsection}}
\def\@makechapterhead#1{%
  \vspace*{40\p@}%
  {\parindent \z@ \raggedright \normalfont
    \interlinepenalty\@M
    \Huge \bfseries #1\par \nobreak
    \vskip 40\p@
  }}
\let\latex@l@chapter\l@chapter
\def\l@chapter#1#2{\begingroup\let\numberline\@gobble\latex@l@chapter{#1}{#2}\endgroup}
\theoremstyle{plain}
\newtheorem{Th}{Theorem}[section]
\newtheorem{Prop}[Th]{Proposition}
\newtheorem{Lem}[Th]{Lemma}
\newtheorem{Cor}[Th]{Corollary}
\theoremstyle{definition}
\newtheorem{Rem}[Th]{Remark}
\newtheorem{Ex}[Th]{Example}
\newtheorem{Def}[Th]{Definition}
\def\bf{\textbf}
\def\it{\textit}
\def\tn{\textnormal}
\def\leq{\leqslant}
\def\geq{\geqslant}
\def\R{{\mathds R}}
\def\N{{\mathds N}}
\def\B{\textit{I\!B}}
\def\D{{\mathrm{dom}}\,}
\def\E{{\mathrm{epi}}\,}
\def\G{{\mathrm{gph}}\,}
\def\dist{\mathrm{dist}}
\def\e{{\scriptstyle\Theta}}
\begin{document}
\vspace*{0cm}
\title{Reduction of lower semicontinuous solutions of Hamilton-Jacobi-Bellman equations}

\author{\vspace*{-0.2cm}{Arkadiusz Misztela \textdagger}\vspace*{-0.2cm}}
\thanks{\textdagger\, Institute of Mathematics, University of Szczecin, Wielkopolska 15, 70-451 Szczecin, Poland; e-mail: arkadiusz.misztela@usz.edu.pl}

\begin{abstract} 
This article is devoted to the study of lower semicontinuous solutions of Hamilton-Jacobi  equations with convex Hamiltonians in a gradient variable. Such\linebreak Hamiltonians appear in the optimal control theory. We present a necessary and sufficient condition for a reduction of a Hamiltonian satisfying optimality conditions to the case when the Hamiltonian is positively homogeneous and also satisfies optimality conditions. It allows us to reduce some uniqueness problems of lower semicontinuous solutions to Barron-Jensen and Frankowska theorems. For  Hamiltonians, which cannot be reduced in that way, we prove the new existence and uniqueness theorems.\\
\vspace{0mm}

\hspace{-1cm}
\noindent  \bf{\scshape Keywords.} Hamilton-Jacobi equations, viscosity solutions,  optimal control   theory, \\\hspace*{-0.55cm} set-valued analysis, nonsmooth analysis, convex analysis.

\vspace{3mm}\hspace{-1cm}
\noindent \bf{\scshape Mathematics Subject Classification.} 34A60, 49J52, 49L20, 49L25, 35Q93.
\end{abstract}

\maketitle

\pagestyle{myheadings}  \markboth{\small{\scshape Arkadiusz Misztela}
}{\small{\scshape Hamilton-Jacobi-Bellman Equations}}

\thispagestyle{empty}

\vspace{-0.8cm}


\section{Introduction}

\noindent The Cauchy problem for the  Hamilton-Jacobi equation 
\begin{equation}\label{rowhj}
\begin{array}{rll}
-U_{t}+ H(t,x,-U_{x})=0 &\!\!\textnormal{in}\!\! & (0,T)\times\R^{\scriptscriptstyle N}, \\[0mm]
U(T,x)=g(x) & \!\!\textnormal{in}\!\! & \;\R^{\scriptscriptstyle N},
\end{array}
\end{equation}
with a  convex Hamiltonian $H$  in the gradient variable can be studied with connection to 
 a calculus of variations problem. Let  $H^{\ast}$ be the Legendre-Fenchel conjugate of $H$ in its gradient variable (in our case $H^{\ast}$ is an extended real-valued function):
\begin{equation*}
H^{\ast}(t,x,v)= \sup_{p\in\R^{\scriptscriptstyle N}}\,\{\,\langle v,p\rangle-H(t,x,p)\,\}.
\end{equation*}
Here $\langle v,p\rangle$ denotes the inner product of $v$ and $p$. We will use the notation $\D H^{\ast}(t,x,\cdot)$ for the \emph{effective domain} of  $H^{\ast}(t,x,\cdot)$, which is a set of all $v$ such that $H^{\ast}(t,x,v)\not=\pm\infty$.
The value function of a  calculus of variations problem is defined by the formula
\begin{equation}\label{fwwp}
V(t_0,x_0)= \inf_{\begin{array}{c}
\scriptstyle x(\cdot)\,\in\,\mathcal{A}\left([t_0,T],\R^{\scriptscriptstyle N}\right)\\[-1mm]
\scriptstyle x(t_0)=x_0
\end{array}}\,\Big\{\,g(x(T))+\int_{t_0}^TH^{\ast}(t,x(t),\dot{x}(t))\,dt\,\Big\},
\end{equation}
where $\mathcal{A}\!\left([t_0,T],\R^{\scriptscriptstyle N}\right)$ denotes the space of all absolutely continuous functions from $[t_0,T]$ into $\R^{\scriptscriptstyle N}$.
If the value function is real-valued and differentiable, it is well-known that it satisfies \eqref{rowhj} in the classical sense. However, in many situations the value function is extended real-valued and merely lower semicontinuous. Then the solution of \eqref{rowhj} must be defined in a nonsmooth sense in such a way that under quite general assumptions on $H$ and $g$, $V$ is the unique solution of \eqref{rowhj}.

\pagebreak
 We consider the following optimality conditions:
\begin{enumerate}[leftmargin=9.7mm]
\item[\tn{\bf{(H1)}}] $H:[0,T]\times\R^{\scriptscriptstyle N}\times\R^{\scriptscriptstyle N}\rightarrow\R$ is continuous with
respect to all variables;
\item[\tn{\bf{(H2)}}] $H(t,x,p)$ is convex with respect to $p$ for every $t\in[0,T]$ and 
$x\in\R^{\scriptscriptstyle N}$;
\item[\tn{\bf{(H3)}}] For any $R\geq 0$ there exists $C_R\geq 0$ such that for all $t\in[0,T]$, $x\in\B_R$ \\
and $p,q\in\R^{\scriptscriptstyle N}$ one has $|H(t,x,p)-H(t,x,q)|\leq C_R|p-q|$;
\item[\tn{\bf{(H4)}}] There exists an integrable function $c:[0,T]\to[0,+\infty)$ such that for almost all\\ 
$t\in[0,T]$ and all $x,p,q\in\R^{\scriptscriptstyle N}$ one has $|H(t,x,p)-H(t,x,q)|\leq c(t)(1+|x|)|p-q|$;
\item[\tn{\bf{(H5)}}] For any $R\geq 0$ there exists an integrable function $k_R:[0,T]\to[0,+\infty)$ such that\\ $|H(t,x,p)-H(t,y,p)|\leq k_R(t)(1+|p|)|x-y|$ for all $x,y\!\in\!\B_R$, $p\!\in\!\R^{\scriptscriptstyle N}$ and a.e. $\!t\in[0,T]$.
\end{enumerate}
where  $\B_R$ denotes a closed ball in  $\R^{\scriptscriptstyle N}$ of center $0$ and radius $R\geq 0$ and $|\cdot|$  denotes the Euclidean norm on  $\R^{\scriptscriptstyle N}$. For a nonempty subset $W$ of $\R^{\scriptscriptstyle N}$ we define $\|W\|:=\sup_{\xi\in W}|\xi|$.

\vspace{2mm}
Barron-Jensen \cite{B-J} and Frankowska \cite{HF} studied extended viscosity solutions to semicontinuous functions for Hamiltonians that are convex with respect to the last variable.   Frankowska~\cite{HF}  called these solutions   \it{lower semicontinuous solutions}.

\begin{Def}\label{lsc-solutions}  A function $U:[0,T]\times\R^{\scriptscriptstyle N}\rightarrow\R\cup\{+\infty\}$ is a \it{lower semicontinuous solution} of  \eqref{rowhj} if $U$ is a lower semicontinuous function, $U(T,x)=g(x)$ for all $x\in\R^{\scriptscriptstyle N}$, and for any $(t,x)\in \D U$, for all $(p_{t},p_{x})\in\partial U(t,x)$,  one has
\begin{align}
& -p_{t}+H(t,x,-p_{x})\geq 0\;\;\;\;\;\tn{if}\;\;\;\;\;0\leq t< T,\label{lsc-solutions-1}\\
& -p_{t}+H(t,x,-p_{x})\leq 0\;\;\;\;\;\tn{if}\;\;\;\;\;0<t\leq T,\label{lsc-solutions-2}
\end{align}
where $\partial U(t,x)$ denotes subdifferential of $U$ at $(t,x)$.
\end{Def}

Frankowska \cite{HF} proved that the value function $V$ is the unique lower semicontinuous solution of   \eqref{rowhj} if the Hamiltonian $H$ satisfies (H1)-(H5) and it is positively homogeneous in $p$, i.e. $\forall _{r\geq 0}\;H(t,x,rp)=rH(t,x,p)$. Whereas  $g$ is a lower semicontinuous extended real-valued function  which  does not take on the value $-\infty$. Earlier, Barron-Jensen \cite{B-J,B-J-1991} using quite different methods obtained similar results to those of Frankowska  assuming slightly stronger conditions on the Hamiltonian $H$. The paper by Barles \cite{GB} provides some extensions and an informal discussion of Barron and Jensen's ideas. 

\vspace{2mm}
For lower semicontinuous solutions the uniqueness results of \cite{B-J,HF} use   positively homogeneous Hamiltonian in $p$. This is not quite as restrictive as it seems at first. Because  $H:[0,T]\times\R^{\scriptscriptstyle N}\times\R^{\scriptscriptstyle N}\rightarrow\R$ can be replaced by $\bar{H}:[0,T]\times\R^{\scriptscriptstyle N+1}\times\R^{\scriptscriptstyle N+1}\rightarrow\R$ such that
\begin{equation}\label{phc}\begin{split}
&\bar{H}(t,x,r,p,q)\;\tn{is positively homogeneous in}\;(p,q)\; \tn{for all}\;t\!\in\![0,\!T],x\!\in\!\R^{\scriptscriptstyle N}\!\!,r\!\in\!\R,\\[-1.5mm]
&\bar{H}(t,x,r,p,-1)=H(t,x,p)\;\tn{for all}\;t\!\in\![0,T],x\!\in\!\R^{\scriptscriptstyle N}\!\!,r\!\in\!\R,p\!\in\!\R^{\scriptscriptstyle N}.
\end{split}
\end{equation}
Indeed, suppose that there are  two lower semicontinuous solutions $U_1$ and $U_2$ of \eqref{rowhj} with $H$ and $g$. Then, $\bar{U}_1(t,x,r)=U_1(t,x)+r$ and $\bar{U}_2(t,x,r)=U_2(t,x)+r$ both satisfy \eqref{rowhj} with $\bar{H}$ and $\bar{g}(x,r)=g(x)+r$. In view of the uniqueness result of \cite{HF} we get $\bar{U}_1=\bar{U}_2$. Hence it follows that $U_1=U_2$. It is possible, provided that $\bar{H}$ satisfies not only \eqref{phc}, but also $(\bar{\tn{H}}1)$-$(\bar{\tn{H}}5)$, where $(\bar{\tn{H}}1)$-$(\bar{\tn{H}}5)$ denotes conditions (H1)-(H5) for $\bar{H}$ with doubled arguments $(x,r)$ and $(p,q)$. Therefore, it should be stated precisely what conditions on $H$ imply the existence of $\bar{H}$ which satisfies conditions \eqref{phc} and $(\bar{\tn{H}}1)$-$(\bar{\tn{H}}5)$. We define such conditions on the Hamiltonian $H$ in the following theorem. 

\begin{Th}\label{thmrd}
Let $H$ be given. Then the following conditions are equivalent:
\begin{enumerate}[leftmargin=7.5mm]
\item[\tn{\bf{(A)}}] $H$ satisfies \tn{(H1)-(H5)} and there is a continuous function $\lambda:\![0,T]\times\R^{\scriptscriptstyle N}\to\R^+$ such that $\|\D H^{\ast}(t,x,\cdot)\|\leq\lambda(t,x)$ and  $\|H^{\ast}(t,x,\D H^{\ast}(t,x,\cdot))\|\leq\lambda(t,x)$ for all $t\!\in\![0,\!T]$, $x\!\in\!\R^{\scriptscriptstyle N}$\tn{;}\linebreak $\forall R\!\geq\!0\;\exists\,\zeta_R(\cdot)\!\in\! L^{\scriptscriptstyle 1}([0,\!T],\R^+)$ such that $\lambda(t,\cdot)$ is $\zeta_R(t)$-Lipschitz on $\B_R$ for a.e. $t\in[0,\!T]$\tn{;}\linebreak
$\exists\,\vartheta(\cdot)\in L^{\scriptscriptstyle 1}([0,\!T],\R^+)$ such that $\lambda(t,x)\leq\vartheta(t)(1+|x|)$ for all $x\in\R^{\scriptscriptstyle N}$ and a.e. $t\in[0,T]$.
\item[\tn{\bf{(B)}}] There exists $\bar{H}$ satisfying  \eqref{phc} and $(\bar{\tn{H}}1)$-$(\bar{\tn{H}}5)$.
\end{enumerate}
\end{Th}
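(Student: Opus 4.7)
The proof proceeds via Legendre duality: real-valued sublinear (i.e.\ convex and positively homogeneous of degree one) functions on $\R^{\scriptscriptstyle N+1}$ are exactly the support functions of nonempty compact convex subsets of $\R^{\scriptscriptstyle N+1}$. For the implication (A)$\Rightarrow$(B) I would define $\bar H$ as the support function of the set
\begin{equation*}
K(t,x):=\big\{(v,\rho)\in\R^{\scriptscriptstyle N}\!\times\R : H^{\ast}(t,x,v)\leq \rho\leq\lambda(t,x)\big\},
\end{equation*}
that is, $\bar H(t,x,r,p,q):=\sup\{\langle v,p\rangle+\rho q : (v,\rho)\in K(t,x)\}$. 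The bounds $\|\D H^{\ast}(t,x,\cdot)\|\leq\lambda(t,x)$ and $\|H^{\ast}(t,x,\D H^{\ast}(t,x,\cdot))\|\leq\lambda(t,x)$ from (A) guarantee that $K(t,x)$ is nonempty (pick any $v\in\D H^{\ast}(t,x,\cdot)$ and take $\rho=\lambda(t,x)$) and contained in the Euclidean ball of radius $\lambda(t,x)$, while lower semicontinuity of $H^{\ast}(t,x,\cdot)$ makes $K(t,x)$ compact. Consequently $\bar H$ is real-valued, convex and positively homogeneous in $(p,q)$, and manifestly independent of $r$. At $q=-1$ the supremum over $\rho$ is attained at the lower bound $\rho=H^{\ast}(t,x,v)$, giving $\bar H(t,x,r,p,-1)=\sup_v(\langle v,p\rangle-H^{\ast}(t,x,v))=H^{\ast\ast}(t,x,p)=H(t,x,p)$, which is \eqref{phc}.

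The regularity axioms $(\bar{\tn{H}}1)$-$(\bar{\tn{H}}5)$ for $\bar H$ then reduce to continuity and local Lipschitz estimates for the set-valued map $(t,x)\mapsto K(t,x)$ in the Hausdorff metric, through the standard facts that the support function of a compact convex set has Lipschitz constant equal to the norm of the set in its linear argument, and depends Lipschitzly on the set (with constant equal to the magnitude of the linear argument) in the Hausdorff metric. The $(p,q)$-estimates $(\bar{\tn{H}}3)$ and $(\bar{\tn{H}}4)$ follow immediately from $\|K(t,x)\|\leq\lambda(t,x)\leq\vartheta(t)(1+|x|)$ together with continuity of $\lambda$. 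The main obstacle is the spatial Lipschitz bound $(\bar{\tn{H}}5)$: given $(v,\rho)\in K(t,x)$ one must exhibit $(v',\rho')\in K(t,y)$ within distance $O(|x-y|)$. The vertical coordinate is controlled by the hypothesis that $\lambda(t,\cdot)$ is $\zeta_R(t)$-Lipschitz, combined with the requirement $\rho'\geq H^{\ast}(t,y,v')$; the horizontal coordinate requires Hausdorff-Lipschitz dependence of the multifunction $\D H^{\ast}(t,\cdot)$ and pointwise Lipschitz dependence of $H^{\ast}(t,\cdot,v)$. Both of the latter are extracted by combining (H3), which bounds the near-maximizers $p$ in the Legendre supremum defining $H^{\ast}$ and thus confines them to a compact ball, with (H5), which then transfers the Lipschitz-in-$x$ behaviour from $H$ to $H^{\ast}$. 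This conjugate-analysis step is the delicate core of the argument.

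For the converse (B)$\Rightarrow$(A) the duality is unwound in reverse. Since $\bar H(t,x,r,\cdot,\cdot)$ is a real-valued sublinear function, its Fenchel conjugate is the indicator of a nonempty compact convex set $K(t,x,r)\subset\R^{\scriptscriptstyle N+1}$, and the identity $\bar H(t,x,r,p,-1)=H(t,x,p)$ forces $K$ to be independent of $r$. Setting $\lambda(t,x):=\|K(t,x)\|$, the support-function representation yields that $\D H^{\ast}(t,x,\cdot)$ equals the projection of $K(t,x)$ onto the first $N$ coordinates and $H^{\ast}(t,x,v)=\inf\{\rho:(v,\rho)\in K(t,x)\}$, both quantities bounded in norm by $\lambda(t,x)$. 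Continuity of $\lambda$ and the Lipschitz/growth conditions it is required to satisfy follow from $(\bar{\tn{H}}1)$-$(\bar{\tn{H}}5)$ by evaluating $\bar H$ at coordinate directions such as $(e_i,0)$ and $(0,\pm 1)$, which linearly recover the extremal coordinates of $K$, while (H1)-(H5) for $H$ are inherited directly from $(\bar{\tn{H}}1)$-$(\bar{\tn{H}}5)$ via the restriction $q=-1$.
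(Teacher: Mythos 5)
Your overall architecture---take a compact convex set sandwiched between $\G H^{\ast}(t,x,\cdot)$ and $\E H^{\ast}(t,x,\cdot)$, bounded in norm by $\lambda(t,x)$, and define $\bar H$ as its support function---is essentially the one the paper uses: there the set is realized as $\mathrm{e}(t,x,\B)$ through a Steiner-selection parametrization (Theorems \ref{rtwcs1} and \ref{rtwcs2}, imported from \cite{AM,AM1}), which simultaneously produces the representation $(\B,f,l)$, whereas you work directly with the truncated epigraph $K(t,x)$. The verification of \eqref{phc}, the homogeneity, $(\bar{\tn{H}}3)$--$(\bar{\tn{H}}4)$, and the outline of (B)$\Rightarrow$(A) are all fine.

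The genuine gap is in the step you yourself single out as the delicate core: the spatial Lipschitz estimate for $(t,x)\mapsto K(t,x)$ that must deliver $(\bar{\tn{H}}5)$. You justify it by ``pointwise Lipschitz dependence of $H^{\ast}(t,\cdot,v)$,'' obtained by claiming that (H3) confines the near-maximizers of the Legendre supremum to a compact ball. Neither claim is correct. Condition (H3) bounds $\D H^{\ast}(t,x,\cdot)$, not the maximizing $p$'s in $\sup_{p}\{\langle v,p\rangle-H(t,x,p)\}$; those escape to infinity as $v$ approaches the boundary of $\D H^{\ast}(t,x,\cdot)$, which is exactly where $H^{\ast}$ blows up. And $x\mapsto H^{\ast}(t,x,v)$ is in general not Lipschitz---not even upper semicontinuous: the paper's own example opening Section \ref{section-3} ($H(x,p)=\max\{|p|\,|x|-1,0\}$, with $\limsup_i L(1/i,1/i)=1>0=L(0,0)$) is a counterexample, and is precisely the stated reason the Barron--Jensen graph construction fails. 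All that survives is the one-sided estimate (L6) of Theorem \ref{tw2_rlhmh} (equivalent to (H5)): for $v\in\D L(t,x,\cdot)$ there is $w\in\D L(t,y,\cdot)$ with $|w-v|\leq k_R(t)|x-y|$ and $L(t,y,w)\leq L(t,x,v)+k_R(t)|x-y|$. Your construction is in fact reparable with this weaker input, because the truncation fills in the epigraph above the graph: mapping $(v,\rho)\in K(t,x)$ to $\big(w,\max\{\min\{\rho,\lambda(t,y)\},L(t,y,w)\}\big)$ gives $K(t,x)\subset K(t,y)+(k_R(t)+\zeta_R(t))\,|x-y|\,(\B\times[-1,1])$, using the $\zeta_R(t)$-Lipschitz continuity of $\lambda(t,\cdot)$ for the upper cap. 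But as written, the mechanism you offer for the decisive estimate is false and must be replaced by this one-sided argument.
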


From the above theorem it follows that boundedness of the sets  $H^{\ast}(t,x,\D H^{\ast}(t,x,\cdot))$ and $\D H^{\ast}(t,x,\cdot)$ by an appropriately regular function $\lambda$ is a condition not only sufficient\linebreak but also necessary for the reduction described above. We prove Theorem \ref{thmrd} using\linebreak recently derived results \cite{AM,AM1} concerning representations of Hamilton-Jacobi equations in the optimal control theory; see Section \ref{section-3}. In papers
 \cite{DM-F-V,G,AM2,P-Q,FR-S} one investigates\linebreak  existence and uniqueness of lower semicontinuous solutions of \eqref{rowhj}, assuming a weak growth and a weak Lipschitz continuity for the Hamiltonian $H$. These nonrestrictive\linebreak conditions imply unboundedness of the sets $\D H^{\ast}(t,x,\cdot)$ and $H^{\ast}(t,x,\D H^{\ast}(t,x,\cdot))$. Thus, such Hamiltonian
does not satisfy the condition (A).
The conditions (H3) and (H4) imposed on the Hamiltonian $H$ imply some type of boundedness of the set $\D H^{\ast}(t,x,\cdot)$. However, there exists a large class of Hamiltonians $H$, that satisfy
 (H1)-(H5), but do not have bounded sets  $H^{\ast}(t,x,\D H^{\ast}(t,x,\cdot))$; see Section \ref{section-2}. This kind of Hamiltonians\linebreak  derive from the optimal control problems with unbounded control set; see Example \ref{ex-5}. We also give an example of Hamiltonian $H$ with the bounded sets $\D H^{\ast}(t,x,\cdot)$ and $H^{\ast}(t,x,\D H^{\ast}(t,x,\cdot))$ such that an appropriately regular function $\lambda$  bounding them does not exist; see Example \ref{ex-2}. 
Thus, we see that there exists a large class of Hamiltonians, that satisfy conditions (H1)-(H5), but do not fulfill the condition (A). For such kind of Hamiltonians we prove the following theorem.

\begin{Th}\label{eauhje}
 Let $g$ be a lower semicontinuous extended real-valued function  which  does not take on the value $-\infty$. Assume that $H$ satisfies \tn{(H1)-(H5)}. If $V$ is the value function associated with $H^{\ast}$ and $g$, then  $V$ is a lower semicontinuous solution of \eqref{rowhj}. Moreover, if $U$ is a lower semicontinuous solution of \eqref{rowhj}, then $U=V$ on $[0,T]\times\R^{\scriptscriptstyle N}$.
\end{Th}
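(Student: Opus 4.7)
My plan is to treat existence and uniqueness separately, combining the optimal-control representations of $H$ from \cite{AM,AM1} with Frankowska's uniqueness theorem \cite{HF} applied to a controlled truncation of $H$.

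For existence, under (H1)-(H5) the results of \cite{AM,AM1} supply a parametrization
\[ H(t,x,p)=\sup_{a\in A}\,\{\langle f(t,x,a),p\rangle-l(t,x,a)\},\qquad H^{\ast}(t,x,v)=\inf_{\{a\in A:\,f(t,x,a)=v\}}l(t,x,a), \]
with $f$ Lipschitz in $x$ and $l$ lower semicontinuous. The value $V$ then coincides with that of a Mayer-Bolza optimal control problem, so standard compactness and lower-closure arguments yield that $V$ is lower semicontinuous, satisfies $V(T,\cdot)=g$, and obeys the dynamic programming principle. From the DPP the subdifferential inequalities \eqref{lsc-solutions-1}-\eqref{lsc-solutions-2} follow by classical Frankowska-type arguments: \eqref{lsc-solutions-2} by propagating smooth subtangents along an optimal trajectory through $(t,x)$, and \eqref{lsc-solutions-1} by combining DPP with the convexity of $H$ in $p$.

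For uniqueness the main obstruction is that $H$ may fail condition (A), so Theorem~\ref{thmrd} does not directly reduce it to a positively homogeneous Hamiltonian for which \cite{HF} applies. I would truncate in the control parameter by setting
\[ H_n(t,x,p):=\sup_{a\in A,\,|a|\leq n}\,\{\langle f(t,x,a),p\rangle-l(t,x,a)\}, \]
and verify that $H_n$ satisfies (H1)-(H5) together with condition~(A): now $\D H_n^{\ast}(t,x,\cdot)\subseteq f(t,x,\{|a|\leq n\})$ is locally uniformly bounded and $H_n^{\ast}$ is correspondingly bounded on its domain, so a continuous $\lambda_n$ of linear growth exists. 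Theorem~\ref{thmrd} together with \cite{HF} then delivers a unique lsc solution $V_n$ of \eqref{rowhj} with $H$ replaced by $H_n$, and monotone convergence in the cost functional gives $V_n\searrow V$ pointwise.

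It remains to equate an arbitrary lsc solution $U$ of \eqref{rowhj} with $V$. Translating \eqref{lsc-solutions-1}-\eqref{lsc-solutions-2} into the epigraph-invariance language of \cite{HF} for the multifunction $G(t,x):=\{(v,r):\,r\geq H^{\ast}(t,x,v)\}$, and exploiting that the truncated multifunction $G_n$ is contained in $G$, I would show that $U$ inherits the invariance properties for $G_n$ up to an explicit error vanishing as $n\to\infty$. Frankowska's uniqueness applied to $H_n$ then sandwiches $U$ against $V_n$, and the monotone limit forces $U=V$. The main obstacle is precisely this transfer across the truncation: the weak-invariance half of the characterization is not immediate for $G_n$ since trajectories of $G$ need not lie in $G_n$, so one has to produce near-minimizers for the truncated problem close to those of the full problem. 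I expect this to be controlled by the linear-growth condition (H4) and the local Lipschitz condition (H5), which provide uniform a priori bounds on near-optimal trajectories over each compact subset of $[0,T]\times\R^{\scriptscriptstyle N}$ and thus allow the untruncated problem to be recovered from its truncations.
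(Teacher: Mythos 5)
Your existence argument follows essentially the paper's route (minimizers supplied by the representation results of \cite{AM,AM1}, then the forward and backward subderivative inequalities), but the uniqueness argument has a fatal gap: the claim that $V_n\searrow V$ pointwise is unjustified and in general false. A near-optimal trajectory for the full problem only satisfies $\int_{t_0}^{T}H^{\ast}(t,x(t),\dot x(t))\,dt<+\infty$; the integrand may be unbounded in $t$, so $\dot x(t)$ leaves $\D H_n^{\ast}(t,x(t),\cdot)$ on a set of positive measure for every $n$ and the trajectory is infeasible for every truncated problem. Repairing $\dot x$ on that set moves the endpoint $x(T)$, and since $g$ is only lower semicontinuous this can raise the cost by a fixed amount. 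Concretely, take the Hamiltonian of Example \ref{ex-2} with $T<1/4$: the point $x_0+T$ is reached from $(0,x_0)$ only by $\dot x\equiv 1$, with cost $\int_0^T t^{-1/2}dt=2\sqrt{T}$, whereas any truncation confines the velocity to $|v|\leq \mathrm{const}\cdot n\sqrt{t}$ near $t=0$ and therefore cannot reach $x_0+T$ at all. Choosing $g=0$ at $x_0+T$ and $g=1$ elsewhere (proper and lsc) gives $V(0,x_0)=2\sqrt{T}<1\leq\inf_n V_n(0,x_0)$, so the sandwich $U\leq\inf_nV_n$ does not close.

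The second obstacle, which you flag but do not resolve, is equally serious: the supersolution inequality \eqref{lsc-solutions-1} for $H$ does not transfer to $H_n$ with a vanishing error. Since $H_n\leq H$, you would need to control $H-H_n$ at subgradients $-p_x$ ranging over an unbounded set, and the velocity $v$ witnessing the tangential condition for $H^{\ast}$ need not lie in $\D H_n^{\ast}(t,x,\cdot)$ — it may sit where $H^{\ast}$ is arbitrarily large or at the (possibly non-closed) boundary of the domain, exactly the situations excluded by condition (A). This is why the paper does not truncate: it proves new invariance and viability theorems (Theorems \ref{Twon} and \ref{Twi5}) directly for the unbounded multifunction $Q$. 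The invariance half uses a parametrization of $Q$ over the \emph{unbounded} parameter set $\R^{\scriptscriptstyle N+1}$ whose extra fixed-point property (P2) makes the control $a(t)=(\dot x,\dot u)(t)$ automatically integrable, so Frankowska's approximation scheme survives; the viability half replaces the classical pointwise tangential condition \eqref{swnc} — which genuinely fails for some lsc solutions, cf. the example \eqref{expnun1}--\eqref{expnun2} — by the weakened condition \eqref{swn} with approximating points $(t_k,x_k)$ and errors $\alpha_k\to 0$, and runs an Euler broken-line construction. Section \ref{section-4} of the paper records explicitly why reductions to compact-valued maps, which is what your truncation amounts to, break down here.
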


We prove Theorem \ref{eauhje} using methods of  the viability theory similarly to Frankowska \cite{HF}; see Sections \ref{section-4}-\ref{section-5}. The difference is that we consider set-valued maps with unbounded values. Whereas in paper \cite{HF} set-valued maps with compact values are considered. These differences cause new difficulties, but we are able to deal with them. Thus, we obtain\linebreak result that do not need the positively homogeneous assumption on the Hamiltonian in $p$.\linebreak In papers \cite{B-CD,B-B,C-S-2004,C-L-87,HI0,HI,AM3, AP} one can find similar results to Theorem \ref{eauhje}. However,\linebreak these results usually require that the solution is continuous and bounded and that the Hamiltonian satisfies some kind of uniform continuity conditions with constant functions $c(\cdot)$ and $k_R(\cdot)$. Galbraith \cite{G} also obtained similar results to Theorem~\ref{eauhje}. However, his methods need a strong Lipschitz-type assumption on the Hamiltonian with respect to the time variable. In the literature related to such results one usually assumes that the Hamiltonian is continuous in the time variable \cite{HF} or only measurable \cite{F-P-Rz}. Recently, there has been a paper  \cite{B-V} assuming that the Hamiltonian is discontinuous with respect to the time variable in the following sense: it has everywhere left and right limits and  is continuous on a set of the full measure. Finally, it is worth considering whether the positively homogeneous assumption on the Hamiltonian in $p$ can be removed in papers \cite{B-V,F-M-2013, F-P-Rz} using the methods of this paper. At the moment we do not know if it is possible.

\vspace{2mm}
The outline of the paper is as follows. Section \ref{section-2} contains a preliminary material and examples. In Section \ref{section-3} we prove Theorem \ref{thmrd}. Section~\ref{section-4} contains new viability and invariance theorems along with their proofs. In Section \ref{section-5} using viability and invariance theorems from Section \ref{section-4}  we prove Theorem \ref{eauhje}.

\section{Preliminary Material and Examples}\label{section-2}

\noindent Let $\overline{\R}=\R\cup\{\pm\infty\}$ and $\varphi:\R^{\scriptscriptstyle M}\to\overline{\R}$ be a function. The sets: $\D\varphi=\{\,z\in\R^{\scriptscriptstyle M}\mid\varphi(z)\not=\pm\infty\,\}$, $\G\varphi=\{\,(z,r)\in\R^{\scriptscriptstyle M}\times\R\mid\varphi(z)=r\,\}$ and $\E\varphi=\{\,(z,r)\in\R^{\scriptscriptstyle M}\times\R\mid\varphi(z)\leq r\,\}$ are called the \emph{effective domain}, the \it{graph} and the \it{epigraph} of $\varphi$, respectively. We say that $\varphi$ is \it{proper} if it never takes  the value $-\infty$ and it is not identically equal to $+\infty$.  Using properties of the Legendre-Fenchel conjugate from~\cite{R-W} we can prove the following proposition.

\begin{Prop}\label{prop2-fmw} Assume that $H$ satisfies \tn{(H1)-(H2)}. If $L(t,x,\cdot\,)=H^{\ast}(t,x,\cdot\,)$, then
\begin{enumerate}
\item[\tn{\bf{(L1)}}] $L:[0,T]\times\R^{\scriptscriptstyle N}\times\R^{\scriptscriptstyle N}\to\overline{\R}$ is lower semicontinuous with respect to all variables\tn{;}
\item[\tn{\bf{(L2)}}] $L(t,x,v)$ is convex and proper with respect to $v$ for every $t\in[0,T]$ and 
$x\in\R^{\scriptscriptstyle N}$\tn{;}
\item[\tn{\bf{(L3)}}] $\forall\,(t,x,v)\in[0,T]\times\R^{\scriptscriptstyle N}\times\R^{\scriptscriptstyle N}\;\forall\,(t_n,x_n)\rightarrow (t,x)\;\exists\,v_n\rightarrow v\,:\,L(t_n,x_n,v_n)\rightarrow L(t,x,v)$.
\item[]\hspace{-1.3cm}Additionally, if $H$ satisfies \tn{(H3)}, then the following property holds
\item[\tn{\bf{(L4)}}] $\forall\,R\geq 0\;\exists\,C_R\geq 0\;\forall\,(t,x,v)\in[0,T]\times\B_R\times\R^{\scriptscriptstyle N} \,:\, |v|>C_R\;\Rightarrow\; L(t,x,v) =+\infty$.
\item[]\hspace{-1.3cm}Additionally, if $H$ satisfies \tn{(H4)}, then there exists a measure zero set $\mathcal{N}$ such that
\item[\tn{\bf{(L5)}}] $\forall\,(t,x,v)\in[0,T]\setminus\mathcal{N}\times\R^{\scriptscriptstyle N}\times\R^{\scriptscriptstyle N} \;:\; |v|>c(t)(1+|x|)\;\Rightarrow\; L(t,x,v) =+\infty$.
\end{enumerate}
\end{Prop}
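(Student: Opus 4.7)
The plan is to treat (L1)--(L3) as consequences of parameterized Legendre--Fenchel duality and (L4)--(L5) by direct estimates using (H3) and (H4) respectively. Throughout, $L(t,x,v)=H^{\ast}(t,x,v)$ and the relevant facts of convex analysis are drawn from \cite{R-W}.

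For (L2): since $H(t,x,\cdot)$ is convex, continuous and real-valued on $\R^{\scriptscriptstyle N}$ by (H1)--(H2), it is in particular proper, so its conjugate $L(t,x,\cdot)$ is automatically convex, lower semicontinuous and proper, with $L(t,x,v)\geq -H(t,x,0)>-\infty$. For (L1), I would write
\[
L(t,x,v)=\sup_{p\in\R^{\scriptscriptstyle N}}\bigl\{\langle v,p\rangle-H(t,x,p)\bigr\}
\]
and observe that this is a pointwise supremum of functions jointly continuous in $(t,x,v)$, hence lower semicontinuous in $(t,x,v)$.

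The main step is (L3). The decisive fact I would invoke is that, for finite convex functions on $\R^{\scriptscriptstyle N}$, pointwise convergence upgrades automatically to uniform convergence on compacta. Hence, for any $(t_n,x_n)\to(t,x)$, continuity of $H$ together with convexity of $H(t_n,x_n,\cdot)$ yields $H(t_n,x_n,\cdot)\to H(t,x,\cdot)$ locally uniformly on $\R^{\scriptscriptstyle N}$. By the standard stability of Legendre--Fenchel conjugation under such convergence (epi-convergence of conjugates, \cite{R-W}), $L(t_n,x_n,\cdot)$ epi-converges to $L(t,x,\cdot)$. The ``limsup'' part of epi-convergence then produces, for every $v$, a recovery sequence $v_n\to v$ with $L(t_n,x_n,v_n)\to L(t,x,v)$, which is precisely (L3). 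As a bonus, the ``liminf'' part gives back (L1) as well.

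For (L4), given $R\geq 0$, let $C_R$ be the constant from (H3), fix $(t,x)\in[0,T]\times\B_R$ and $|v|>C_R$, and test the supremum defining $L$ at $p_n=nv/|v|$; by (H3),
\[
L(t,x,v)\geq n|v|-H(t,x,p_n)\geq n(|v|-C_R)-H(t,x,0)\longrightarrow+\infty,
\]
so $L(t,x,v)=+\infty$. For (L5), I would take $\mathcal{N}$ to be the measure zero set excluded in (H4) and repeat the identical one-line computation with $C_R$ replaced by $c(t)(1+|x|)$ and $t\notin\mathcal{N}$. The main obstacle lies in (L3): everything hinges on the upgrade from pointwise continuity of $H$ to local uniform convergence of the convex functions $H(t_n,x_n,\cdot)$. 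Once this is in hand, the rest is a direct application of standard facts from convex duality and a pair of one-line estimates.
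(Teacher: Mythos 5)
Your proposal is correct and follows exactly the route the paper indicates: the paper gives no written proof of this proposition, stating only that it follows from properties of the Legendre--Fenchel conjugate in \cite{R-W}, and your argument supplies precisely those ingredients (properness and lower semicontinuity of conjugates for (L1)--(L2), the upgrade from pointwise to locally uniform convergence of finite convex functions plus Wijsman's theorem on epi-convergence of conjugates for (L3), and the direct coercivity estimates for (L4)--(L5)). The only point worth polishing is that (L3) requires combining both halves of epi-convergence — the recovery sequence from the limsup inequality together with the liminf inequality — to turn the one-sided bounds into actual convergence of $L(t_n,x_n,v_n)$, including the case $L(t,x,v)=+\infty$ where the liminf half alone forces divergence along any $v_n\to v$.
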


Actually, we can prove that (H1)-(H4) are equivalent to  (L1)-(L5).
  The set $\G S:= \{\,(z,w)\in\R^{\scriptscriptstyle M}\times\R^{\scriptscriptstyle N}\mid w\in S\!(z)\,\}$ is called a \it{graph} of the set-valued map $S:\R^{\scriptscriptstyle M}\multimap\R^{\scriptscriptstyle N}$. A~set-valued map $S:\R^{\scriptscriptstyle M}\multimap\R^{\scriptscriptstyle N}$ is \it{lower semicontinuous} in  Kuratowski's sense if for each open set $O\subset\R^{\scriptscriptstyle N}$ the inverse image  $S^{-1}(O):= \{\,z\in\R^{\scriptscriptstyle M}\mid S\!(z)\cap O\not=\emptyset\,\}$ is open in $\R^{\scriptscriptstyle M}$. It is equivalent to  $\forall\,(z,w)\in\G S\;\forall\,z_n\to z\;\exists\,w_n\to w\,:\,w_n\in S\!(z_n)$ for large $n\in\N$.     

\vspace{2mm}
Let us define the set-valued map $Q:[0,T]\times\R^{\scriptscriptstyle N}\multimap\R^{\scriptscriptstyle N}\times\R$ by the formula
\begin{equation*}
Q(t,x):= \{\,(v,\eta)\in\R^{\scriptscriptstyle N}\times\R\,\mid\, (v,-\eta)\in\E L(t,x,\cdot)\,\}.
\end{equation*}

From results in \cite[Chap. 5]{R-W} we deduce the following corollary. 

\begin{Cor}\label{wrow-wm}
If $L$ satisfies \tn{(L1)-(L3)}, then
\begin{enumerate}
\item[$\pmb{(\mathcal{Q}1)}$] the set-valued map $(t,x)\to Q(t,x)$ has nonempty, closed, convex values;
\item[$\pmb{(\mathcal{Q}2)}$] the set-valued map $(t,x)\to Q(t,x)$ is lower semicontinuous;
\item[$\pmb{(\mathcal{Q}3)}$] the set-valued map $(t,x)\to Q(t,x)$ has a closed graph.
\item[]\hspace{-1.3cm}Additionally, if $L$ satisfies \tn{(L4)}, then the following inequality holds
\item[$\pmb{(\mathcal{Q}4)}$] $\|\D L(t,x,\cdot)\|\leq C_R$\, for every $(t,x)\in[0,T]\times\B_R$ and  $R\geq 0$.
\item[]\hspace{-1.3cm}Additionally, if $L$ satisfies \tn{(L5)}, then the following inequality holds
\item[$\pmb{(\mathcal{Q}5)}$] $\|\D L(t,x,\cdot)\|\leq c(t)(1+|x|)$ for  almost all $t\in[0,T]$ and every $x\in\R^{\scriptscriptstyle N}$.
\end{enumerate}
\end{Cor}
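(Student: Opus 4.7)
The plan is to read off each of $(\mathcal{Q}1)$--$(\mathcal{Q}5)$ directly from the corresponding property of $L$, by viewing $Q(t,x)$ as the image of $\E L(t,x,\cdot)$ under the linear reflection $(v,r)\mapsto(v,-r)$. Properties $(\mathcal{Q}1)$ and $(\mathcal{Q}3)$ are then essentially formal. By (L2), $L(t,x,\cdot)$ is proper and convex, so $\E L(t,x,\cdot)$ is nonempty and convex; by (L1) it is closed; and the reflection preserves each of these three properties, yielding $(\mathcal{Q}1)$. For $(\mathcal{Q}3)$, suppose $(t_n,x_n,v_n,\eta_n)\to (t,x,v,\eta)$ with $L(t_n,x_n,v_n)\leq -\eta_n$. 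Taking $\liminf$ and invoking the joint lower semicontinuity in (L1) gives
\[
L(t,x,v)\;\leq\;\liminf_{n\to\infty}L(t_n,x_n,v_n)\;\leq\;-\eta,
\]
so $(v,\eta)\in Q(t,x)$.

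The main step is $(\mathcal{Q}2)$, where (L3) is used in a nontrivial way. Fix $(v,\eta)\in Q(t,x)$ and a sequence $(t_n,x_n)\to(t,x)$. Since $\eta\in\R$ and $L(t,x,v)\leq -\eta<+\infty$, and $L(t,x,\cdot)$ is proper by (L2), the value $L(t,x,v)$ is finite. Apply (L3) at $(t,x,v)$ to obtain $v_n\to v$ with $L(t_n,x_n,v_n)\to L(t,x,v)$. Set
\[
\eta_n\;:=\;\min\bigl\{\,\eta,\;-L(t_n,x_n,v_n)\,\bigr\},
\]
so that $L(t_n,x_n,v_n)\leq -\eta_n$ automatically, i.e.\ $(v_n,\eta_n)\in Q(t_n,x_n)$. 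Since $-L(t,x,v)\geq\eta$, one has $\eta_n\to \min\{\eta,-L(t,x,v)\}=\eta$. This is a recovery sequence for $(v,\eta)$, establishing the Kuratowski lower semicontinuity of $Q$ at $(t,x)$.

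Properties $(\mathcal{Q}4)$ and $(\mathcal{Q}5)$ are contrapositive restatements of (L4) and (L5). If $|v|>C_R$ and $(t,x)\in [0,T]\times\B_R$, then (L4) forces $L(t,x,v)=+\infty$, so $v\notin\D L(t,x,\cdot)$; thus $\D L(t,x,\cdot)\subset\B_{C_R}$ and $\|\D L(t,x,\cdot)\|\leq C_R$. The same reasoning with $c(t)(1+|x|)$ in place of $C_R$ and $t\notin\mathcal{N}$ gives $(\mathcal{Q}5)$. I expect no serious obstacle beyond the bookkeeping in $(\mathcal{Q}2)$; the one mild subtlety is checking that $L(t,x,v)$ is finite so that the $\min$ construction converges to $\eta$, which is exactly what propriety of $L(t,x,\cdot)$ together with the assumption $(v,\eta)\in Q(t,x)$ delivers.
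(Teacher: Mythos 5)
Your proof is correct. The paper itself offers no written argument for this corollary -- it simply deduces it ``from results in \cite[Chap.~5]{R-W}'', where conditions (L1) and (L3) together are recognized as saying that the epigraphical mapping $(t,x)\mapsto\E L(t,x,\cdot)$ is continuous in the sense of set convergence (outer semicontinuity from joint lower semicontinuity, inner semicontinuity from the recovery-sequence property), and $Q$ is just that mapping composed with the reflection $(v,r)\mapsto(v,-r)$. What you have done is unwind that citation into an elementary, self-contained verification, and every step holds up: the reflection argument for $(\mathcal{Q}1)$, the $\liminf$ argument for $(\mathcal{Q}3)$, and the contrapositive readings of (L4)--(L5) for $(\mathcal{Q}4)$--$(\mathcal{Q}5)$ are all routine and correct. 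The one genuinely nontrivial step, the recovery sequence for $(\mathcal{Q}2)$, is handled properly by the $\min$ construction: since $L(t_n,x_n,v_n)\to L(t,x,v)\in\R$, the quantity $\eta_n=\min\{\eta,-L(t_n,x_n,v_n)\}$ is a real number for all large $n$ (which is all that Kuratowski lower semicontinuity, as formulated in the paper, requires), $(v_n,\eta_n)\in Q(t_n,x_n)$ by construction, and $\eta_n\to\eta$ precisely because $(v,\eta)\in Q(t,x)$ forces $-L(t,x,v)\geq\eta$. Your version buys transparency and independence from the epi-convergence machinery of \cite{R-W}; the paper's citation buys brevity. Either is acceptable.
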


We present Hausdorff continuity of a set-valued map in Lagrangian and Hamiltonian terms. For nonempty subsets $V$, $W$ of $\R^{\scriptscriptstyle M}$ and a number $r\in\R$ we define $rW:=\{r\,w\mid w\in W\}$ and $V\!+\!W:=\{v\!+\!w\mid v\in V,\, w\in W\}$. Set $\B(z,r):=\{z\}+r\B$, where $z\!\in\!\R^{\scriptscriptstyle M}$, $\B:=\B_1$, $r\geq 0$.

\begin{Th}[\tn{\cite[Thm. 2.3]{AM}}]\label{tw2_rlhmh}
Assume that $H$ satisfies \tn{(H1)-(H4)} or equivalently $L$ satisfies \tn{(L1)-(L5)}. Let  $L(t,x,\cdot\,)=H^{\ast}(t,x,\cdot\,)$ and  $H(t,x,\cdot\,)=L^{\ast}(t,x,\cdot\,)$. Then there are the equivalences $\tn{(H5)}\Leftrightarrow\tn{(L6)}\Leftrightarrow (\mathcal{Q}6)$\tn{:}

\tn{\bf{(L6)}} For any $R\geq 0$ there exists an integrable  map $k_R:[0,T]\to[0,+\infty)$  such that for almost all $t\in[0,T]$  and every $x,y\in \B_R$, $v\in\D L(t,x,\cdot)$ there exists $w\in\D L(t,y,\cdot)$ satisfying inequalities $|w-v|\leq k_R(t)|y-x|$ and $L(t,y,w)\leq L(t,x,v)+k_R(t)|y-x|$.

$\pmb{(\mathcal{Q}6)}$ For any $R\geq 0$ there exists an integrable map $k_R:[0,T]\to[0,+\infty)$ such that $Q(t,x)\,\subset\, Q(t,y)+k_R(t)\,|x-y|\,(\B\times[-1,1])$ for almost all $t\in[0,T]$ and every $x,y\in\B_R$.
\end{Th}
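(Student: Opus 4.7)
My plan is to establish the three equivalences as a cycle, but in fact it is cleaner to prove (L6)$\Leftrightarrow(\mathcal{Q}6)$ directly from the definition of $Q$, and then handle (H5)$\Leftrightarrow$(L6) via Legendre--Fenchel duality.

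\textbf{Equivalence (L6)$\Leftrightarrow(\mathcal{Q}6)$.} I would unfold the definition: $(v,\eta)\in Q(t,x)$ means $L(t,x,v)\leq -\eta$, so $v\in\D L(t,x,\cdot)$, and conversely every $v\in\D L(t,x,\cdot)$ produces $(v,-L(t,x,v))\in Q(t,x)$. Assuming (L6), for such $(v,\eta)$ I pick $w\in\D L(t,y,\cdot)$ with $|w-v|\leq k_R(t)|x-y|$ and $L(t,y,w)\leq L(t,x,v)+k_R(t)|x-y|$, then set $\mu=\eta-k_R(t)|x-y|$; then $(w,\mu)\in Q(t,y)$ and $(w-v,\mu-\eta)\in k_R(t)|x-y|(\B\times[-1,1])$. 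Conversely, assuming $(\mathcal{Q}6)$, for $v\in\D L(t,x,\cdot)$ I apply it to the point $(v,-L(t,x,v))$ to recover $w$ and read off the two inequalities of (L6).

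\textbf{Direction (H5)$\Rightarrow$(L6).} Set $\delta:=k_R(t)|x-y|$. From (H5) I have $H(t,y,p)\geq H(t,x,p)-\delta(1+|p|)$, and using $\delta|p|=\sigma_{\delta\B}(p)$ I compute
\begin{equation*}
L(t,y,v)=\sup_{p}[\langle v,p\rangle-H(t,y,p)]\leq \sup_{p}[\langle v,p\rangle-H(t,x,p)+\sigma_{\delta\B}(p)]+\delta.
\end{equation*}
The sup on the right is the conjugate of $H(t,x,\cdot)+\sigma_{\delta\B}(\cdot)$ at $v$. Since $(\sigma_{\delta\B})^{\ast}$ is the indicator of $\delta\B$, the conjugate of a sum formula (infimal convolution) yields
\begin{equation*}
\sup_{p}[\langle v,p\rangle-H(t,x,p)-\sigma_{\delta\B}(p)]=\inf_{|\xi|\leq\delta} L(t,x,v-\xi).
\end{equation*}
Wait, I need the sign of $\sigma$ to match, so I would instead work with the identity $(\sigma_{\delta\B})^{\ast}=\iota_{\delta\B}$ and the conjugate-of-sum/inf-convolution relation in the form that applies here, using that $L(t,x,\cdot)$ is proper lsc convex with bounded effective domain (by ($\mathcal{Q}4$)). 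Because $\D L(t,x,\cdot)$ is compact and $L(t,x,\cdot)$ lsc, the infimum over the compact set $v+\delta\B$ is attained at some $w\in\D L(t,y,\cdot)$ with $|w-v|\leq\delta$ and $L(t,y,w)\leq L(t,x,v)+\delta$, which is exactly (L6).

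\textbf{Direction (L6)$\Rightarrow$(H5).} I would exploit $H(t,y,\cdot)=L(t,y,\cdot)^{\ast}$ (bi-conjugation, valid because $H(t,y,\cdot)$ is real-valued convex). For any $p\in\R^{\scriptscriptstyle N}$ and $\varepsilon>0$, pick $v\in\D L(t,x,\cdot)$ with $\langle v,p\rangle-L(t,x,v)\geq H(t,x,p)-\varepsilon$; by (L6) there is $w\in\D L(t,y,\cdot)$ with $|w-v|\leq\delta$ and $L(t,y,w)\leq L(t,x,v)+\delta$, hence
\begin{equation*}
H(t,y,p)\geq \langle w,p\rangle-L(t,y,w)\geq \langle v,p\rangle -\delta|p|-L(t,x,v)-\delta\geq H(t,x,p)-\varepsilon-\delta(1+|p|).
\end{equation*}
Letting $\varepsilon\to 0$ and then swapping the roles of $x$ and $y$ gives (H5).

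\textbf{Main obstacle.} The subtle point is the measurability/integrability of the single map $k_R$ that must serve in all three conditions. Duality preserves the pointwise Lipschitz constant but care is needed with the exceptional null set where convex-conjugate identities can fail; I would fix the null set on which (L5) fails and work pointwise off of it, noting that none of the arguments above introduce a new $t$-dependence. The second delicate point is the attainment of the infimum in the (H5)$\Rightarrow$(L6) direction, which relies essentially on compactness of $\D L(t,x,\cdot)$ from ($\mathcal{Q}4$); without (L4) one would only get (L6) up to an $\varepsilon$.
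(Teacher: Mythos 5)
First, note that the paper does not prove this statement at all: it is quoted verbatim from \cite[Thm.\ 2.3]{AM}, so there is no in-paper proof to compare against. Judged on its own merits, your proposal follows the natural duality route, and two of your three steps are correct and complete: the equivalence (L6)$\Leftrightarrow(\mathcal{Q}6)$ is a clean unfolding of the definition of $Q$ via the correspondence $(v,\eta)\in Q(t,x)\Leftrightarrow L(t,x,v)\leq-\eta$, and the direction (L6)$\Rightarrow$(H5) via near-maximizers of $\sup_v[\langle v,p\rangle-L(t,x,v)]$ is fine. Your remark that the same $k_R(\cdot)$ serves in all three conditions, so no new measurability issue arises, is also the right observation.

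The weak point is (H5)$\Rightarrow$(L6), where your displayed computation bounds the wrong quantity and the ``identity'' you then invoke is stated for the wrong function. Bounding $L(t,y,v)$ itself leads to $L(t,y,v)\leq\big(H(t,x,\cdot)-\sigma_{\delta\B}\big)^{\ast}(v)+\delta=\sup_{|\xi|\leq\delta}L(t,x,v+\xi)+\delta$, a supremum that overestimates and gives nothing; this is the sign mismatch you notice but do not actually repair. The correct move is to bound the infimal convolution from above: by Rockafellar's exact conjugate-of-sum formula applied to $H(t,y,\cdot)+\sigma_{\delta\B}$ (both finite everywhere, so the infimum is attained),
\begin{equation*}
\min_{|\xi|\leq\delta}L(t,y,v-\xi)=\big(H(t,y,\cdot)+\sigma_{\delta\B}\big)^{\ast}(v)
=\sup_{p}\big[\langle v,p\rangle-H(t,y,p)-\delta|p|\big]
\leq\sup_{p}\big[\langle v,p\rangle-H(t,x,p)\big]+\delta,
\end{equation*}
where the last inequality uses (H5) rearranged as $H(t,y,p)+\delta|p|\geq H(t,x,p)-\delta$. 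The right-hand side is $L(t,x,v)+\delta$, and any minimizer $w=v-\xi$ then satisfies both inequalities of (L6); note that attainment here comes from exactness of the inf-convolution (or from lower semicontinuity of $L(t,y,\cdot)$ on the compact ball $\B(v,\delta)$), not from compactness of $\D L(t,x,\cdot)$, so your appeal to $(\mathcal{Q}4)$ in the ``main obstacle'' paragraph is not actually where the argument lives. With this one step rewritten, the proof is complete.
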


\begin{Lem}[\tn{Cesari \cite[Sects. 8.5 and 10.5]{LC}}]\label{llcq}
Assume that $L$ satisfies \tn{(L1)-(L4)}. Then the set-valued map $Q$ for all $t\in[0,T]$ and $x\in\R^{\scriptscriptstyle N}$ has the following
property 
\begin{align*}
& Q(t,x)=\bigcap_{\varepsilon\,>\,0}\,\mathrm{cl}\,\mathrm{conv}\,Q(t,x;\varepsilon), \; \tn{where}\\
& Q(t,x;\varepsilon):=\bigcup_{|t-s|\,<\,\varepsilon,\;|x-y|\,<\,\varepsilon}Q(s,y).
\end{align*}
\end{Lem}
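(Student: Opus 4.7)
My approach is to express both sides via support functions in the direction $(p,1)$, exploit that a set and its closed convex hull share the same support function, and pass to the limit $\varepsilon \to 0^{+}$ using continuity of $H$. Legendre--Fenchel duality then recovers $L$ from $H$ and closes the argument.

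First I would settle the trivial inclusion $Q(t,x) \subseteq \bigcap_{\varepsilon>0}\mathrm{cl}\,\mathrm{conv}\,Q(t,x;\varepsilon)$: since $(t,x)$ lies in its own open neighborhood $\{(s,y) : |s-t|<\varepsilon,\,|y-x|<\varepsilon\}$, one has $Q(t,x) \subseteq Q(t,x;\varepsilon) \subseteq \mathrm{cl}\,\mathrm{conv}\,Q(t,x;\varepsilon)$ for every $\varepsilon > 0$.

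For the reverse inclusion, fix $(v_0,\eta_0) \in \bigcap_{\varepsilon>0}\mathrm{cl}\,\mathrm{conv}\,Q(t,x;\varepsilon)$ and any $p \in \R^{\scriptscriptstyle N}$. A continuous linear functional attains the same supremum on a set as on its closed convex hull, hence for every $\varepsilon > 0$
\[
\langle p, v_0 \rangle + \eta_0 \;\leq\; \sup_{(v,\eta) \in Q(t,x;\varepsilon)} \{\langle p, v\rangle + \eta\} \;=\; \sup_{|s-t|<\varepsilon,\,|y-x|<\varepsilon} H(s,y,p).
\]
The last equality uses the definition of $Q(s,y)$: for fixed $v$ the inner supremum over $\{\eta\in\R : \eta \leq -L(s,y,v)\}$ equals $\langle p, v\rangle - L(s,y,v)$ (and is $-\infty$ if $v\notin \D L(s,y,\cdot)$), and its supremum over $v \in \R^{\scriptscriptstyle N}$ is $L^{\ast}(s,y,p)$; since $L = H^{\ast}$ with $H(s,y,\cdot)$ proper, continuous and convex, the Fenchel--Moreau identity gives $L^{\ast} = H^{\ast\ast} = H$. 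Letting $\varepsilon \to 0^{+}$ and using continuity of $H$ (property \tn{(H1)}, available through the equivalence noted after Prop.~\ref{prop2-fmw}) yields $\langle p, v_0 \rangle + \eta_0 \leq H(t,x,p)$ for every $p$. Rearranging and taking the supremum in $p$, $L(t,x,v_0) = H^{\ast}(t,x,v_0) \leq -\eta_0$; equivalently, $(v_0,-\eta_0) \in \E L(t,x,\cdot)$, that is, $(v_0,\eta_0) \in Q(t,x)$.

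The main delicate point is the duality identity $L^{\ast} = H$, which hinges on $H(s,y,\cdot)$ being a proper closed convex real-valued function. Assumption \tn{(L4)} is exactly what forces $\D L(s,y,\cdot)$ to be bounded, ensuring $L^{\ast}$ is finite-valued everywhere; combined with \tn{(L1)--(L3)} and the equivalences of Prop.~\ref{prop2-fmw}, this supplies all continuity and convexity needed in the limit step. Aside from this duality check, the rest of the argument is linear algebra on support functions plus a continuity-based passage to the limit.
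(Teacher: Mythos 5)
Your proof is correct. Note first that the paper does not prove this lemma at all: it is quoted from Cesari's monograph (property (Q) of the orientor fields $Q$), so there is no in-paper argument to compare against. Your support-function route is a clean, self-contained substitute. The easy inclusion is exactly as you say, and the reverse inclusion works because membership in $Q(t,x)$ is \emph{exactly} equivalent to the family of linear inequalities $\langle p,v_0\rangle+\eta_0\leq H(t,x,p)$ for all $p$ (by the definition $L=H^{\ast}$ and Fenchel--Moreau, no separation theorem needed), the support function is unchanged under $\mathrm{cl}\,\mathrm{conv}$, and the shrinking suprema converge by continuity of $H=L^{\ast}$. The two points worth making explicit, which you essentially do, are (i) that \tn{(L4)} is what makes $H=L^{\ast}$ real-valued and (together with \tn{(L1)--(L3)}) continuous --- upper semicontinuity of $H$ in $(s,y)$ needs the maximizing $v$ to range over the compact ball $\B_{C_R}$, and lower semicontinuity needs \tn{(L3)} --- and (ii) that $L$ being proper, convex and lower semicontinuous gives $L^{\ast\ast}=L$, so the chain of conjugations closes. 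Compared with invoking Cesari's property-(Q) machinery, your argument buys a short direct proof whose only real input is the duality between $L$ and $H$ and the continuity of $H$; it is entirely consistent with the paper's hypotheses \tn{(L1)--(L4)}.
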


\subsection{Nonsmooth Analysis}\label{nap} The  distance from the point $y\in\R^{\scriptscriptstyle M}$ to the nonempty subset $E$ of $\R^{\scriptscriptstyle M}$  is defined by $\dist(y,E):=\inf_{w\in E}|\,y-w\,|$. For a function $\varphi:\R^{\scriptscriptstyle M}\to\overline{\R}$  and a point $z\in\D\varphi$, the \it{subderivative function} $d\varphi(z):\R^{\scriptscriptstyle M}\to\overline{\R}$  is defined by
\begin{equation*}
d\varphi(z)(v):=\liminf_{\tau\to 0+,\,y\to v}\frac{\varphi(z+\tau y)-\varphi(z)}{\tau}.
\end{equation*}
The \it{subdifferential} of the function $\varphi:\R^{\scriptscriptstyle M}\to\overline{\R}$ at the point $z\in\D\varphi$ is defined by
\begin{equation*}
\partial\varphi(z):=\big\{\,p\in\R^{\scriptscriptstyle M}\,\mid\,\langle v,p\rangle\leq d\varphi(z)(v)\;\tn{for all}\;v\in\R^{\scriptscriptstyle M}\,\big\}.
\end{equation*}
The \it{tangent cane} to the subset $E$ of $\R^{\scriptscriptstyle M}$ at the point $w\in E$ is defined by
\begin{equation*}
T_{\!E}(w):=\Big\{\,\zeta\in\R^{\scriptscriptstyle M}\,\mid\,\liminf_{\tau\to 0+}\,\frac{\dist(w+\tau \zeta,E)}{\tau}=0 \,\Big\}.
\end{equation*}
We define the \it{normal cone} to the subset $E$ of $\R^{\scriptscriptstyle M}$ at the point $w\!\in\! E$ by polarity with $T_{\!E}(w)$:
\begin{equation*}
N_E(w):=\big\{\,\xi\in\R^{\scriptscriptstyle M}\,\mid\,\langle \zeta,\xi\rangle\leq 0\;\tn{for all}\;\zeta\in T_{\!E}(w)\,\big\}.
\end{equation*}
It follows from \cite[Prop. 6.5 and Ex. 6.16]{R-W} that $y-w\in N_E(w)$ whenever $w\in E$ and $|y-w|=\dist(y,E)$. Moreover, it follows from \cite[Ex. 8.4 and Thm. 8.9]{R-W} that $p\in\partial\varphi(z)$ if and only if $(p,-1)\in N_{\E\varphi}(z,\varphi(z))$. By the definition of the normal cone for all $(p,q)\in N_{\E\varphi}(z,\varphi(z))$ we have $q\leq 0$. Moreover, $N_{\E\varphi}(z,r)\subset N_{\E\varphi}(z,\varphi(z))$ for all  $(z,r)\in\E \varphi$.

\begin{Lem}[Rockafellar]\label{lemrlsc}
Assume that  $\varphi:\R^{\scriptscriptstyle M}\to\overline{\R}$ is a proper and lower semicontinuous  function. Let $z\in\D\varphi$ and $(p,0)\in N_{\E\varphi}(z,\varphi(z))$. Then there exist  $z_k\to z$, $p_k\to p$, $q_k\to 0$  with $\varphi(z_k)\to\varphi(z)$ satisfying $q_k<0$ and $(p_k,q_k)\in N_{\E\varphi}(z_k,\varphi(z_k))$ for all $k\in\N$.
\end{Lem}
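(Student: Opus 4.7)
The plan is to reduce Rockafellar's lemma to the characterization of horizontal normals to $\E\varphi$ as elements of the horizon (singular) subdifferential, which is a standard result in variational analysis from \cite{R-W}. First I would reinterpret the hypothesis using the equivalence $p\in\partial\varphi(z)\Leftrightarrow(p,-1)\in N_{\E\varphi}(z,\varphi(z))$ already invoked in Section~\ref{nap}: the vector $(p,0)$ differs from a subgradient representative only in having last coordinate $0$ instead of $-1$, and by \cite[Thm.~8.9]{R-W} this is exactly the statement that $p$ belongs to the horizon subdifferential $\partial^{\infty}\varphi(z)$ rather than to $\partial\varphi(z)$.

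Next I would apply the standard approximation property of $\partial^{\infty}\varphi$ \cite[Def.~8.3 and Thm.~8.9]{R-W}: for any proper lower semicontinuous $\varphi$ and any $p\in\partial^{\infty}\varphi(z)$, there exist sequences $z_k\to z$ with $\varphi(z_k)\to\varphi(z)$, scalars $\lambda_k\to 0^{+}$, and regular subgradients $v_k\in\partial\varphi(z_k)$ such that $\lambda_k v_k\to p$. This is the only nontrivial ingredient; it rests ultimately on the Ekeland variational principle and on the density of points where $\partial\varphi$ is nonempty along $\varphi$-attentive sequences, but the statement is available off the shelf in \cite{R-W}.

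Finally I would translate back through the same equivalence. For each $k$ the relation $v_k\in\partial\varphi(z_k)$ gives $(v_k,-1)\in N_{\E\varphi}(z_k,\varphi(z_k))$, and since the normal cone is a cone (closed under multiplication by nonnegative scalars), the scaled vector
\begin{equation*}
(p_k,q_k):=(\lambda_k v_k,\,-\lambda_k)
\end{equation*}
lies in $N_{\E\varphi}(z_k,\varphi(z_k))$ as well. By construction $p_k\to p$, $q_k\to 0$ with $q_k<0$ for every $k$, $z_k\to z$ and $\varphi(z_k)\to\varphi(z)$, which is precisely the conclusion of the lemma. The main obstacle is conceptual rather than computational: once one recognizes horizontal normals at $(z,\varphi(z))$ as singular subgradients, the remainder is a definition chase and a trivial rescaling.
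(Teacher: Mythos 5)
The paper itself gives no proof of this lemma --- it is stated as a citation of Rockafellar's result --- so there is no in-paper argument to compare against; judged on its own, your derivation is correct and is the standard one. You correctly identify $(p,0)\in N_{\E\varphi}(z,\varphi(z))$ with $p\in\partial^{\infty}\varphi(z)$ via \cite[Thm.~8.9]{R-W}, unpack the definition of the horizon subdifferential into $\varphi$-attentive sequences $z_k\to z$, $v_k\in\partial\varphi(z_k)$, $\lambda_k\searrow 0$ with $\lambda_k v_k\to p$, and rescale $(v_k,-1)$ inside the cone $N_{\E\varphi}(z_k,\varphi(z_k))$; note that this works cleanly because the paper's normal cone (the polar of the contingent cone, i.e.\ the regular normal cone) is contained in the limiting cone used in \cite[Thm.~8.9]{R-W}, while the normals you output are themselves regular, so both translations go in the right direction. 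The one point you should make explicit is the case $p=0$: the definition of $\partial^{\infty}\varphi(z)$ admits $0$ trivially, without supplying the sequences you need, so there you must invoke separately the density of points $z_k\to z$ with $\varphi(z_k)\to\varphi(z)$ and $\partial\varphi(z_k)\neq\emptyset$ (a consequence of the variational principle, also available in \cite{R-W}) and then choose $\lambda_k\searrow 0$ fast enough that $\lambda_k v_k\to 0$. With that one-line addendum the proof is complete.
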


\subsection{Examples} Now we present examples of Hamiltonians which satisfy (H1)-(H5). These examples have nonregular  Lagrangians, so they  do not satisfy  the condition (A).\linebreak It means that these Hamiltonians are not liable to  the reduction described in the introduction. However, they satisfy the assumptions of Theorem \ref{eauhje}.

\begin{Ex}\label{ex-1}
Let us define the Hamiltonian $H:\R\times\R\to\R$ by the formula
\begin{equation*}
H(x,p):=\left\{
\begin{array}{ccl}
(\sqrt{|xp|}-1)^2 & \tn{if} & |xp|\,> 1, \\[1mm]
0 & \tn{if} & |xp|\leq 1.
\end{array}
\right.
\end{equation*}
This Hamiltonian satisfies conditions  (H1)-(H5). Moreover, $L(x,\cdot\,)=H^{\ast}(x,\cdot\,)$ has the form
\begin{equation*}
L(x,v)=\left\{
\begin{array}{ccl}
+\infty & \tn{if} & v\not\in(-|x|,|x|\,),\;x\not=0,\\[1mm]
\frac{\displaystyle |v|}{\displaystyle |x|-|v|} & \tn{if} & v\in(-|x|,|x|\,),\;x\not=0, \\[2mm]
0 & \tn{if} & v=0,\; x=0,\\[0mm]
+\infty & \tn{if} & v\not=0,\;x=0.
\end{array}
\right.
\end{equation*}
The function $v\rightarrow L(x,v)$ is not bounded on  $\D L(x,\cdot)=(-|x|,|x|\,)$ for every $x\in\R\setminus\{0\}$. Therefore,  a real-valued  function $\lambda$ such that $L(x,v)\leq\lambda(x)$ for all $v\in\D L(x,\cdot)$ and $x\in\R$ does not exist. So this Hamiltonian  does not satisfy the condition (A).
\end{Ex}

\begin{Ex}\label{ex-2}
Let us define the Hamiltonian $H:[0,1]\times\R\to\R$ by the formula
\begin{equation*}
H(t,p):=\left\{
\begin{array}{ccl}
\max\left\{\,|p|-\frac{1}{\sqrt{t}},\,0\,\right\} & \tn{if} & p\in\R,\;t\not=0, \\[1mm]
0 & \tn{if} & p\in\R,\;t=0.
\end{array}
\right.
\end{equation*}
This Hamiltonian satisfies conditions  (H1)-(H5). Moreover, $L(t,\cdot\,)=H^{\ast}(t,\cdot\,)$ has the form
\begin{equation*}
L(t,v)=\left\{
\begin{array}{ccl}
+\infty & \tn{if} & v\not\in[-1,1],\;t\not=0,\\[1mm]
\frac{\displaystyle |v|}{\displaystyle \sqrt{t}} & \tn{if} & v\in[-1,1],\;t\not=0, \\[3mm]
0 & \tn{if} & v=0,\;t=0,\\[0mm]
+\infty & \tn{if} & v\not=0,\;t=0.
\end{array}
\right.
\end{equation*}
Let $\lambda(t)=1/\sqrt{t}$ for $t\in(0,1]$ and $\lambda(0)=0$. Observe that $\|\D L(t,\cdot)\|\leq 1$ for all $t\in[0,1]$ and $\|L(t,\D L(t,\cdot))\|=\lambda(t)$ for all $t\in[0,1]$. Therefore, both the sets $\D L(t,\cdot)$ and $L(t,\D L(t,\cdot))$ are bounded  for all $t\in[0,1]$. However, the function $\lambda$ on the set $[0,1]$ is unbounded.  So this Hamiltonian  does not satisfy the condition (A).
\end{Ex}

\begin{Ex}\label{ex-3}
Let us define the function $\alpha:[0,1]\times\R\to[0,+\infty)$ by the formula
\begin{equation*}
\alpha(t,x):=\left\{
\begin{array}{ccl}
\max\left\{\,\frac{|x|}{\sqrt{t}}-\frac{1}{t},\,0\,\right\} & \tn{if} & x\in\R,\;t\not=0, \\[1mm]
0 & \tn{if} & x\in\R,\;t=0.
\end{array}
\right.
\end{equation*}
The function $\alpha$ is locally Lipschitz continuous. Let $c(t)=1/\sqrt{t}$ for $t\in(0,1]$ and $c(0)=0$. Then $c$ is an integrable function. Moreover, $\alpha(t,x)\leq c(t)(1+|x|)$ for all $t\in[0,1]$, $x\in\R$. However, there is no such constant $c$ that  $\alpha(t,x)\leq c(1+|x|)$ for all $x\in\R$ and a.e. $t\in[0,1]$. 

Let us define the function $\beta:[0,1]\times\R\to[0,+\infty)$ by the formula
\begin{equation*}
\beta(t,x):=\left\{
\begin{array}{ccl}
\big(\,\sqrt{t}+|x|\,\big)\,\Big|\sin\!\left(\displaystyle\frac{1}{\sqrt{t}+|x|}\right)\!\!\Big| & \tn{if} & (t,x)\not=(0,0), \\[1mm]
0 & \tn{if} & (t,x)=(0,0).
\end{array}
\right.
\end{equation*}
The function $\beta$ is continuous. Let $k(t)=2/\sqrt{t}$ for $t\in(0,1]$ and $k(0)=0$. Then $k$ is an integrable function. Moreover, $|\,\beta(t,x)-\beta(t,y)\,|\leq k(t)\,|x-y|$ for all $t\in(0,1]$, $x,y\in\R$. Additionally, $\beta(t,x)\leq 1+|x|$ for all $t\in[0,1]$, $x\in\R$. However, there is no such constant $k_R$ that  $|\,\beta(t,x)-\beta(t,y)\,|\leq k_R\,|x-y|$ for all $x,y\in\B_R$ and a.e. $t\in[0,1]$. 

Let us define the function $\gamma:[0,1]\times\R\to[0,+\infty)$ by the formula
\begin{equation*}
\gamma(t,x)\,:=\,\alpha(t,x)\;+\;\beta(t,x).
\end{equation*}
The function $\gamma$ is continuous. Moreover, for any $R\geq 0$ there exists an integrable function $k_R:[0,1]\to\R^+$ such that $\gamma(t,\cdot)$ is $k_R(t)$-Lipschitz on $\B_R$ for a.e. $t\in[0,1]$.
Additionally, there exists an integrable function $c:[0,1]\to\R^+$ such that $\gamma(t,x)\leq c(t)(1+|x|)$ for  all $x\in\R$ and a.e. $t\in[0,1]$. However, the functions $k_R(\cdot)$ and $c(\cdot)$ cannot be bounded.
\end{Ex}

\begin{Ex}\label{ex-4}
Let us define the Hamiltonian $H:[0,1]\times\R\times\R\to\R$ by the formula
\begin{equation*}
H(t,x,p):=\left\{
\begin{array}{ccl}
(\sqrt{\gamma(t,x)\,|p|}-1)^2 & \tn{if} & \gamma(t,x)\,|p|\,> 1, \\[1mm]
0 & \tn{if} & \gamma(t,x)\,|p|\leq 1,
\end{array}
\right.
\end{equation*}
where $\gamma$ is defined as in Example \ref{ex-3}.
This Hamiltonian satisfies conditions  (H1)-(H5) with the unbounded functions $k_R(\cdot)$ and $c(\cdot)$. Indeed, let us fix $t\in[0,1]$ and $x,y\in\R$. We observe that $|H(t,x,p)-H(t,y,p)|\leq k_R(t)\,(1+|p|)\,|x-y|$ for every $p\in\R$ if and only if $|\gamma(t,x)-\gamma(t,y)|\leq k_R(t)\,|x-y|$. Moreover, $|H(t,x,p)-H(t,x,q)|\leq c(t)\,(1+|x|)\,|p-q|$ for all $p,q\in\R$ if and only if $\gamma(t,x)\leq c(t)\,(1+|x|)$. Additionally, $|H(t,x,p)-H(t,x,q)|\leq C_R|p-q|$ for all $p,q\in\R$ if and only if $\gamma(t,x)\leq C_R$.
 Therefore, in view of Example \ref{ex-3}, we obtain our assertion. We notice that $L(t,x,\cdot\,)=H^{\ast}(t,x,\cdot\,)$ has the form
\begin{equation*}
L(t,x,v)=\left\{
\begin{array}{ccl}
+\infty & \tn{if} & |v|\;\geq\;\gamma(t,x)\,\not=\,0,\\[1mm]
\frac{\displaystyle |v|}{\displaystyle \gamma(t,x)-|v|} & \tn{if} &|v|\;<\,\;\gamma(t,x)\,\not=\,0, \\[2mm]
0 & \tn{if} & v=0,\;\, \gamma(t,x)=0,\\[0mm]
+\infty & \tn{if} & v\not=0,\;\,\gamma(t,x)=0.
\end{array}
\right.
\end{equation*}
The function $v\rightarrow L(t,x,v)$ is not bounded on  $\D L(t,x,\cdot)=(-\gamma(t,x),\gamma(t,x))$ for $\gamma(t,x)\not=0$.  So this Hamiltonian  does not satisfy the condition (A).
\end{Ex}

\begin{Ex}\label{ex-5}
Let us consider  functions $f$ and $l$ satisfying the following conditions:
\begin{enumerate}[leftmargin=9.7mm]
\item[$\pmb{(1)}$] $f:[0,T]\times\R^{\scriptscriptstyle N}\times\R^{\scriptscriptstyle M}\rightarrow\R^{\scriptscriptstyle N}$ and $l:[0,T]\times\R^{\scriptscriptstyle N}\times\R^{\scriptscriptstyle M}\rightarrow\R$ are continuous;
\item[$\pmb{(2)}$] for any $R\geq 0$ there exists an integrable function $k_R:[0,T]\to[0,+\infty)$ such that\\ $|f(t,x,a)-f(t,y,a)|+|l(t,x,a)-l(t,y,a)|\leq k_R(t)\,|x-y|$ for every $x,y\!\in\!\B_R$, $a\!\in\!\R^{\scriptscriptstyle M}$\\ and almost all $t\in[0,T]$;
\item[$\pmb{(3)}$] for any $R\geq 0$ there exists a constant $C_R\geq 0$ such that $|f(t,x,a)|\leq C_R$ for every\\ $t\in[0,T]$, $x\in\B_R$, $a\in\R^{\scriptscriptstyle M}$;
\item[$\pmb{(4)}$] there exists an integrable function $c:[0,T]\to[0,+\infty)$ such that $ c(t)\,(1+|x|)\geq$\\ $|f(t,x,a)|$ for every $x\in\R^{\scriptscriptstyle N}$, $a\in\R^{\scriptscriptstyle M}$ and almost all $t\in[0,T]$;
\item[$\pmb{(5)}$] $\lim_{\scriptstyle |a|\,\to\,\infty}[\inf_{\scriptstyle(t,x)\,\in\,[0,T]\times\,\B_R}l(t,x,a)]=+\infty$\, for every\, $R\geq 0$;
\item[$\pmb{(6)}$] $\{\,(f(t,x,a),l(t,x,a)+r)\mid a\in\R^{\scriptscriptstyle M}\!\!,r\in[0,\infty)\,\}$ is convex for all $t\in[0,T]$, $x\in\R^{\scriptscriptstyle N}\!$.
\end{enumerate}
 For instance, the following functions:
\begin{equation*}
\hat{f}(x,a_1,a_2)=a_1|x|/(1+|a_1|), \qquad \hat{l}(x,a_1,a_2)=|a_1|+|a_2|+|xa_2|/(1+|a_2|),
\end{equation*}
where $x\in\R$ and $(a_1,a_2)\in\R\times\R$, satisfy (1)-(6). 
Let $\gamma(\cdot,\cdot)$ be defined as in Example \ref{ex-3}. 

\pagebreak
\noindent Then the following functions:
\begin{equation*}
\check{f}(t,x,a_1,a_2)=a_1\gamma(t,x)/(1+|a_1|), \qquad \check{l}(t,x,a_1,a_2)=|a_1|+|a_2|+\gamma(t,x)|a_2|/(1+|a_2|),
\end{equation*}
where $t\in[0,1]$, $x\in\R$ and $(a_1,a_2)\in\R\times\R$, also satisfy (1)-(6), but with the unbounded functions $k_R(\cdot)$ and $c(\cdot)$.

We observe that if $f$ and $l$ satisfy (1)-(5), then the Hamiltonian $H$ given by 
\begin{equation}\label{hfl}
H(t,x,p)=  \sup\nolimits_{a\,\in\,\R^{\scriptscriptstyle M}}\,\{\,\langle\, p\,,f(t,x,a)\,\rangle\,-\,l(t,x,a)\,\}
\end{equation}
satisfies  (H1)-(H5). Moreover,  the Hamiltonian $\hat{H}$ given by \eqref{hfl} with $\hat{f},\hat{l}$ is the same as in Example \ref{ex-1} and the Hamiltonian $\check{H}$ given by \eqref{hfl} with $\check{f},\check{l}$ is the same as in Example~\ref{ex-4}.

Using the results from \cite[Sect. 4]{RTR73} and \cite[Sect. 2]{RTR75} one can show that if $f$ and $l$ satisfy (1)-(6), and $g$ is proper and lower semicontinuous, and $H$ is given by \eqref{hfl}, then
\begin{eqnarray*}
V(t_0,x_0) &=& \min_{\begin{array}{c}
\scriptstyle x(\cdot)\,\in\,\mathcal{A}\left([t_0,T],\R^{\scriptscriptstyle N}\right)\\[-1mm]
\scriptstyle x(t_0)=x_0
\end{array}}\!\!\big\{\,g(x(T))+\int_{t_0}^TH^{\ast}(t,x(t),\dot{x}(t))\,dt\,\big\}\\
&=& \min_{(x,a)(\cdot)\,\in\, \emph{S}_f(t_0,x_0)}\,\big\{\,g(x(T))+\int_{t_0}^Tl(t,x(t),a(t))\,dt\,\big\}.
\end{eqnarray*}
\end{Ex}
\noindent where $\emph{S}_f(t_0,x_0)$ denotes a set of all trajectory-measurable pairs of the control system
\begin{equation*}
\left\{\begin{array}{ll}
\dot{x}(t)=f(t,x(t),a(t)),& a(t)\in\R^{\scriptscriptstyle M}\!\!,\;\;\;\mathrm{a.e.}
\;\;t\in[t_0,T],\\
x(t_0)=x_0.&
\end{array}\right.
\end{equation*}

\begin{Rem}\label{rempp}
We cannot apply Theorem \ref{eauhje} to  $H:[0,1]\times\R\to\R$ given by the formula
\begin{equation*}
H(t,p):=\left\{
\begin{array}{ccl}
\max\left\{\,\frac{|p|}{\sqrt{t}}-\frac{1}{t},\,0\,\right\} & \tn{if} & p\in\R,\;t\not=0, \\[1mm]
0 & \tn{if} & p\in\R,\;t=0.
\end{array}
\right.
\end{equation*}
Because this Hamiltonian satisfies the conditions  (H1)-(H2) and (H4)-(H5), but it does not satisfy the condition (H3). 
\end{Rem}

\section{Reduction Theorem}\label{section-3}
\noindent Let us define the Hamiltonian $H:\R\times\R\rightarrow\R$ by the formula
\begin{equation*}
H(x,p)=\max\{\,|p|\,|x|-1,0\,\}.
\end{equation*}
This Hamiltonian satisfies conditions (H1)-(H5). Moreover, $L(x,\cdot\,)=H^{\ast}(x,\cdot\,)$ has the form
\begin{equation*}
L(x,v)=\left\{
\begin{array}{ccl}
+\infty & \tn{if} & v\not\in[-|x|,|x|\,],\;x\not=0,\\[1mm]
\left|\frac{\displaystyle v}{\displaystyle x}\right| & \tn{if} & v\in[-|x|,|x|\,],\;x\not=0, \\[1mm]
0 & \tn{if} & v=0,\; x=0,\\[0mm]
+\infty & \tn{if} & v\not=0,\;x=0.
\end{array}
\right.
\end{equation*}
Let $\lambda(x)=|x|+1$ for every $x\in\R$. Then $\lambda$ is Lipschitz continuous with a sublinear growth.\linebreak  Observe that $\|\D L(x,\cdot)\|\leq \lambda(x)$  and $\|L(x,\D L(x,\cdot))\|\leq\lambda(x)$ for all $x\in\R$. Therefore the above Hamiltonian satisfies the condition (A). 
The question  is how to construct the Hamiltonian $\bar{H}$ satisfying  \eqref{phc} and $(\bar{\tn{H}}1)$-$(\bar{\tn{H}}5)$. 

Barron-Jensen in \cite[Prop. 3.7]{B-J} proposed the following construction of $\bar{H}$:
\begin{eqnarray*}
\bar{H}(x,r,p,q) &:=& \sup_{v\in\D L(x,\cdot)}\,\{\,\langle v,p\rangle+q\,L(x,v)\,\}\\[0mm]
&=&\left\{
\begin{array}{ccl}
\max\left\{\,|p|\,|x|+q,0\,\right\} & \tn{if} & x\not=0\;\;\tn{and}\;\;r,p,q\in\R, \\[0mm]
0 & \tn{if} & x=0\;\;\tn{and}\;\;r,p,q\in\R.
\end{array}
\right.
\end{eqnarray*}
Observe that $\bar{H}$ satisfies \eqref{phc}, but the function $x\to\bar{H}(x,r,p,q)$ is not continuous  for all $(r,p,q)\in\R\times\R\times(0,\infty)$. Therefore $\bar{H}$ does not satisfy $(\bar{\tn{H}}1)$ and $(\bar{\tn{H}}5)$. It means that the construction of  $\bar {H}$ proposed by Barron-Jensen is not appropriate in this case.

\vspace{2mm}
Our construction of the Hamiltonian $\bar{H}$ is based on representations of $H$. The triple $(A,f,l)$ is called a representation of $H$ if it satisfies the following equality
\begin{equation*}
H(t,x,p)=  \sup\nolimits_{a\,\in\,A}\,\{\,\langle\, p\,,f(t,x,a)\,\rangle\,-\,l(t,x,a)\,\}.
\end{equation*}
We observe that the triple $A=[-1,1]$, $f(x,a)=a|x|$, $l(x,a)=|a|$ is a representation of the Hamiltonian from the example above. Moreover, the following Hamiltonian
\begin{eqnarray*}
\bar{H}(x,r,p,q) &:=& \sup\nolimits_{a\in A}\left\{\,\langle\,p,f(x,a)\,\rangle+q\,l(x,a)\,\right\}\\[0mm]
&=&\max\left\{\,|p|\,|x|+q,0\,\right\}
\end{eqnarray*}
satisfies \eqref{phc} and $(\bar{\tn{H}}1)$-$(\bar{\tn{H}}5)$. Obviously,  our construction of the Hamiltonian $\bar {H}$ makes sense, provided that we can find an appropriately regular representation of $H$. In papers \cite{AM,AM1} one proved that this kind representations always exist.

\vspace{2mm}
Now we explain the reason that our construction of Hamiltonian $\bar{H}$ gives the expected results
in contrast to the construction of Barron-Jensen. We know that Hamiltonian $H$ from the above example
satisfies (H1)-(H5). Therefore, by Theorem \ref{tw2_rlhmh}  Lagrangian $L$ satisfies (L6). It means that $L$ is lower-Lipschitz continuous. On the other hand, our example shows that  $L$ is not upper-Lipschitz continuous and, what is more, it is not upper semicontinuous. Indeed, $L$ is not upper semicontinuous, because $\limsup_{i\rightarrow\infty}L\left(1/i,1/i\right)=1\nleqslant 0=L(0,0)$. It is not difficult to see that the main reason for ineffectiveness of the construction of $\bar{H}$ proposed by Barron-Jensen  is the lack of upper-Lipschitz continuity of $L$. If the triple $(A,f,l)$ is a faithful representation of $H$, then the functions $f$ and $l$ are Lipschitz continuous. In particular, the function $l$ is lower/upper-Lipschitz continuous. Due to that, our construction of Hamiltonian $\bar{H}$ gives the expected result.

\vspace{2mm}
In the proof of  Theorem \ref{thmrd} we need slightly modified Theorems 3.1 and 3.4 from \cite{AM}.
We use the first theorem to prove $\tn{(B)}\Rightarrow\tn{(A)}$ and the second theorem to prove $\tn{(A)}\Rightarrow\tn{(B)}$.

\begin{Th}\label{rtwcs1}
Assume that $H$ satisfies the condition \tn{(A)} from Theorem \ref{thmrd}. Then there exists a representation $(\B,f,l)$ of $H$ such that $\B$ is a closed unit ball in $\R^{\scriptscriptstyle N+1}$ and 
\begin{enumerate}[leftmargin=9.7mm]
\item[$\pmb{(\tn{R1})}$] $f:[0,T]\times\R^{\scriptscriptstyle N}\times\B\rightarrow\R^{\scriptscriptstyle N}$ and $l:[0,T]\times\R^{\scriptscriptstyle N}\times\B\rightarrow\R$ are continuous;
\item[$\pmb{(\tn{R2})}$] for any $R\geq 0$ there exists an integrable function $K_R:[0,T]\to[0,+\infty)$ such that\\ $|f(t,x,a)-f(t,y,a)|+|l(t,x,a)-l(t,y,a)|\leq K_R(t)\,|x-y|$ for every $x,y\!\in\!\B_R$, $a\!\in\!\B$\\ and almost all $t\in[0,T]$\tn{;}
\item[$\pmb{(\tn{R3})}$] $|f(t,x,a)|+|l(t,x,a)|\leq C(t)(1+|x|)$ for all $x\in\R^{\scriptscriptstyle N}$, $a\in\B$, and for a.e. $t\in[0,T]$,\\ and some integrable function $C:[0,T]\to[0,+\infty)$.
\end{enumerate}
\end{Th}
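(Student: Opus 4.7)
The plan is to deduce Theorem \ref{rtwcs1} from a parameterization theorem for convex, compact, lower semicontinuous set-valued maps, applied to a bounded truncation of the epigraph of $L(t,x,\cdot)=H^{\ast}(t,x,\cdot)$. Concretely, first I would record that under (A) the Lagrangian $L$ satisfies (L1)--(L6) (Proposition \ref{prop2-fmw} and Theorem \ref{tw2_rlhmh}) and that, by condition (A), both $\|\D L(t,x,\cdot)\|\leq \lambda(t,x)$ and $\|L(t,x,\D L(t,x,\cdot))\|\leq \lambda(t,x)$, where $\lambda$ is continuous, Lipschitz in $x$ on bounded sets with integrable constants $\zeta_R(\cdot)$, and dominated by $\vartheta(t)(1+|x|)$.

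Next I would form the associated set-valued map
\begin{equation*}
E(t,x):=\bigl\{(v,\eta)\in\R^{\scriptscriptstyle N}\!\times\R : v\in\D L(t,x,\cdot),\; L(t,x,v)\leq\eta\leq\lambda(t,x)\bigr\}.
\end{equation*}
By the inclusion $E(t,x)\subset\B_{\lambda(t,x)}\times[-\lambda(t,x),\lambda(t,x)]\subset\R^{\scriptscriptstyle N+1}$, each $E(t,x)$ is nonempty, convex and compact. Using properties $(\mathcal{Q}1)$--$(\mathcal{Q}3)$ from Corollary \ref{wrow-wm} together with continuity of $\lambda$, the map $(t,x)\mapsto E(t,x)$ is lower semicontinuous with closed graph. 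Property $(\mathcal{Q}6)$ from Theorem \ref{tw2_rlhmh} combined with the Lipschitz regularity of $\lambda(t,\cdot)$ yields Hausdorff-Lipschitz dependence of $E(t,x)$ on $x$, with integrable constants of the form $k_R(t)+\zeta_R(t)$. The bound $\lambda(t,x)\leq \vartheta(t)(1+|x|)$ then gives $\|E(t,x)\|\leq \vartheta(t)(1+|x|)$ for a.e.\ $t$ and all $x$.

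I would then invoke (the slight modification of) Theorem 3.1 of \cite{AM}, a Steiner-point/selection-style parameterization: for any set-valued map into $\R^{\scriptscriptstyle N+1}$ with the above properties, there is a continuous map $(t,x,a)\mapsto (f(t,x,a),l(t,x,a))$ from $[0,T]\times\R^{\scriptscriptstyle N}\times\B$ onto $E(t,\cdot)$ (where $\B$ is the closed unit ball of $\R^{\scriptscriptstyle N+1}$) whose Lipschitz-in-$x$ constant and growth bound are inherited from those of the set-valued map. This yields (R1), (R2) with $K_R(t):=k_R(t)+\zeta_R(t)$, and (R3) with $C(t):=\vartheta(t)$. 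Finally, the identity $H(t,x,p)=\sup_{a\in\B}\{\langle p,f(t,x,a)\rangle-l(t,x,a)\}$ is verified by using $E(t,x)$'s defining property together with $H(t,x,\cdot)=L(t,x,\cdot)^{\ast}$: since $\D L(t,x,\cdot)$ is contained in $\B_{\lambda(t,x)}$, the supremum over $E(t,x)$ of $\langle p,v\rangle-\eta$ equals that over $\G L(t,x,\cdot)$, which in turn equals $H(t,x,p)$.

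The main obstacle I anticipate is the dual tracking of regularity along $\lambda$: producing a parameterization whose modulus $K_R$ is an integrable function (not just bounded on each compact in $t$) requires that both the set-valued map $Q$ and the scaling $\lambda$ contribute only integrable Lipschitz constants, which is where the integrability of $\zeta_R$ and the hypothesis $\lambda(t,x)\leq\vartheta(t)(1+|x|)$ with integrable $\vartheta$ in (A) are essential. Care must also be taken near points where $\lambda(t,x)=0$ so that the parameterization remains continuous and Lipschitz; this is handled by scaling the unit ball of $\R^{\scriptscriptstyle N+1}$ by $\lambda(t,x)$ before mapping into $E(t,x)$, which is compatible with the parameterization scheme of \cite{AM}.
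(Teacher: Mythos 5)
Your proposal is correct in substance but takes a genuinely different route from the paper. The paper does not truncate the epigraph: it parameterizes the \emph{unbounded} set $\E H^{\ast}(t,x,\cdot)$ directly by the explicit formula $\mathrm{e}(t,x,a)=S_{\scriptscriptstyle N+1}\big[\E H^{\ast}(t,x,\cdot)\cap\B\big(\omega(t,x)a,2\dist(\omega(t,x)a,\E H^{\ast}(t,x,\cdot))\big)\big]$ with $\omega=2\lambda+1$, quotes \cite[Sect.~5]{AM} for continuity and the Lipschitz estimate $K_R(t)=20(N+1)(k_R(t)+2\zeta_R(t))$, obtains the sandwich $\G H^{\ast}(t,x,\cdot)\subset\mathrm{e}(t,x,\B)\subset\E H^{\ast}(t,x,\cdot)$, and proves (R3) by the elementary chain $|\mathrm{e}(t,x,a)|\leq 3\omega(t,x)+2\dist(0,\E H^{\ast}(t,x,\cdot))\leq 10\lambda(t,x)+3$. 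You instead compactify first, replacing the epigraph by $E(t,x)=\E L(t,x,\cdot)\cap(\R^{\scriptscriptstyle N}\times(-\infty,\lambda(t,x)])$ and then feeding a continuous, compact-convex-valued, Lipschitz-in-$x$ map into a standard parameterization theorem --- exactly the tool the paper itself uses (as \cite[Thm.~9.6.2]{A-F}) in the converse direction $(\tn{B})\Rightarrow(\tn{A})$. Your route works, and it buys you off-the-shelf compact-valued machinery instead of the unbounded-set Steiner construction of \cite{AM}; the price is a handful of verifications you currently only assert: that $\D L(t,x,\cdot)$ is closed under (A) (it is, since $L(t,x,\cdot)$ is lsc and bounded by $\lambda(t,x)$ on its domain, so $E(t,x)$ really is compact); that lower semicontinuity and the Hausdorff--Lipschitz bound survive the truncation (they do, via the retraction $\eta\mapsto\max\{L(t,y,w),\min\{\eta,\lambda(t,y)\}\}$, giving the constant $k_R(t)+\zeta_R(t)$); and the final identity, where your computation $\sup_{(v,\eta)\in E(t,x)}\{\langle p,v\rangle-\eta\}=\sup_{v}\{\langle p,v\rangle-L(t,x,v)\}=H(t,x,p)$ is the right one (the remark about $\D L\subset\B_{\lambda(t,x)}$ is not what drives it). One attribution slip: the Steiner-selection parameterization you want is not Theorem~3.1 of \cite{AM} (which the paper uses for the opposite implication) but rather \cite[Thm.~9.6.2]{A-F} or the construction of \cite[Sect.~5]{AM}.
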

\begin{proof}
Assume that $H$ satisfies (H1)-(H5) with $c(\cdot)$, $k_R(\cdot)$. Let $\lambda$ be as in the condition (A) with $\vartheta(\cdot)$, $\zeta_R(\cdot)$. We define $\mathrm{e}:[0,T]\times\R^{\scriptscriptstyle N}\times\R^{\scriptscriptstyle N+1}\to\R$ by the formula
$$\mathrm{e}(t,x,a):=S_{\!\scriptscriptstyle N+1}\big[\E H^{\ast}(t,x,\cdot)\cap\B\big(\omega(t,x)\,a,2\,\dist(\omega(t,x)\,a,\E H^{\ast}(t,x,\cdot))\big)\big],$$
where $S_{\!\scriptscriptstyle N+1}[\,\cdot\,]$ in the Steiner selection and $\omega(t,x):=2\lambda(t,x)+1$. By \cite[Section 5]{AM} the function $\mathrm{e}(\cdot,\cdot,\cdot)$ is well defined and continuous. Moreover, $\mathrm{e}(t,\,\cdot\,,a)$ is $\frac{1}{2}K_R(t)$-Lipschitz on $\B_R$ for a.e. $t\in[0,T]$ and all $a\in\B$, where $K_R(t):=20(N+1)(k_R(t)+2\zeta_R(t))$. Additionally, 
\begin{equation}\label{inrep}
\forall\,(t,x)\in[0,T]\times\R^{\scriptscriptstyle N}\;\;\;\G H^{\ast}(t,x,\cdot)\subset\mathrm{e}(t,x,\B)\subset\E H^{\ast}(t,x,\cdot).
\end{equation}

We show that $|\mathrm{e}(t,x,a)|\leq\frac{1}{2}C(t)(1+|x|)$ for a.e. $t\in[0,T]$ and all $x\in\R^{\scriptscriptstyle N}$, $a\in\B$, where $C(t):=20\vartheta(t)+6$. Indeed, by \cite[p. 366]{A-F} we have $S_{\!\scriptscriptstyle N+1}[W]\in W$ for every nonempty, convex and compact subset $W$ of $\R^{\scriptscriptstyle N+1}$. Thus, for all $t\in[0,T]$, $x\in\R^{\scriptscriptstyle N}$, $a\in\R^{\scriptscriptstyle N+1}$, we obtain
\begin{equation}\label{inrb1}
\mathrm{e}(t,x,a)\in\B\big(\omega(t,x)\,a,2\,\dist(\omega(t,x)\,a,\E H^{\ast}(t,x,\cdot))\big).
\end{equation}
Let $v\in\D H^{\ast}(t,x,\cdot)\neq\emptyset$. Then $(v,H^{\ast}(t,x,v))\in\E H^{\ast}(t,x,\cdot)$. Hence, we obtain 
\begin{equation}\label{inrb2}
\dist(0,\E H^{\ast}(t,x,\cdot))\leq |(v,H^{\ast}(t,x,v))|\leq |v|+|H^{\ast}(t,x,v))|\leq 2\lambda(t,x)\end{equation}
for all $t\in[0,T]$ and $x\in\R^{\scriptscriptstyle N}$.
 Combining \eqref{inrb1} and \eqref{inrb2} we obtain
\begin{eqnarray*}
|\mathrm{e}(t,x,a)| &\leq & \omega(t,x)\,|a| +2\,\dist(\omega(t,x)\,a,\E H^{\ast}(t,x,\cdot))\\
&\leq & 3\,\omega(t,x)+2\,\dist(0,\E H^{\ast}(t,x,\cdot))\\
&\leq & 3\,\omega(t,x)+4\,\lambda(t,x)\;\;=\;\;10\,\lambda(t,x)+3\\
&\leq & (10\vartheta(t)+3)(1+|x|)
\end{eqnarray*}
for all $t\in[0,T]$, $x\in\R^{\scriptscriptstyle N}$, $a\in\B$.

 Next, we define the functions $f$ and $l$ as components of the function~$\mathrm{e}$, i.e., $\mathrm{e}=(f,l)$. Then, by \eqref{inrep} and \cite[Prop. 5.7]{AM}, the triple $(\B,f,l)$ is a representation of $H$.  It is not difficult to prove that the functions $f$ and $l$ satisfy (R1)-(R3). 
\end{proof}

\begin{Th}\label{rtwcs2}
If the triple $(\B,f,l)$ is a representation of $H$ and the functions $f,\,l$ satisfy \tn{(R1)-(R3)}, then $H$ satisfies the condition \tn{(A)}. 
\end{Th}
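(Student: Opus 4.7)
The plan is to verify each clause of (A) directly from the representation $H(t,x,p)=\sup_{a\in\B}\{\langle p,f(t,x,a)\rangle-l(t,x,a)\}$, using compactness of $\B$ throughout.

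First I would derive (H1)--(H5) from (R1)--(R3). Continuity (H1) and convexity in $p$ (H2) follow immediately from the sup structure, the continuity of $f,l$, and the compactness of $\B$. The two standard sup-differencing estimates
\[
|H(t,x,p)-H(t,x,q)|\,\leq\,\sup_{a\in\B}|f(t,x,a)|\,|p-q|
\]
and
\[
|H(t,x,p)-H(t,y,p)|\,\leq\,\sup_{a\in\B}\bigl(|p|\,|f(t,x,a)-f(t,y,a)|+|l(t,x,a)-l(t,y,a)|\bigr)
\]
combine with (R3) to give (H4) with $c=C$, and with the boundedness of $f$ on $[0,T]\times\B_R\times\B$ (continuous function on a compact set) to give (H3). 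Applying (R2) in the second estimate yields (H5) with $k_R=K_R$.

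Next I would set
\[
\lambda(t,x)\,:=\,\sup_{a\in\B}\bigl(|f(t,x,a)|+|l(t,x,a)|\bigr),
\]
which is continuous on $[0,T]\times\R^{\scriptscriptstyle N}$ by compactness of $\B$ and continuity of $f,l$. Condition (R2) forces $|\lambda(t,x)-\lambda(t,y)|\leq K_R(t)|x-y|$ for $x,y\in\B_R$ and a.e. $t$, so $\zeta_R=K_R$ works; condition (R3) gives $\lambda(t,x)\leq C(t)(1+|x|)$ a.e. $t$, so $\vartheta=C$ works.

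The central step is the identification $\D H^{\ast}(t,x,\cdot)=\mathrm{conv}\{f(t,x,a):a\in\B\}$. For the inclusion $\supseteq$ I would use a Jensen argument: if $v=\sum_i\mu_i f(t,x,a_i)$ is a convex combination, then $H(t,x,p)\geq\langle p,f(t,x,a_i)\rangle-l(t,x,a_i)$ for each $i$, so taking the convex combination gives $\langle v,p\rangle-H(t,x,p)\leq\sum_i\mu_i l(t,x,a_i)$ uniformly in $p$, whence $H^{\ast}(t,x,v)\leq\sum_i\mu_i l(t,x,a_i)<\infty$. The converse $\subseteq$ uses strict separation: since $\mathrm{conv}\{f(t,x,a):a\in\B\}$ is a compact convex subset of $\R^{\scriptscriptstyle N}$, any $v$ outside it admits $p_0\in\R^{\scriptscriptstyle N}$ and $\varepsilon>0$ with $\langle p_0,v\rangle\geq\sup_{a\in\B}\langle p_0,f(t,x,a)\rangle+\varepsilon$; then evaluating along $tp_0$ as $t\to+\infty$ yields $\langle tp_0,v\rangle-H(t,x,tp_0)\geq t\varepsilon-\sup_{a\in\B}|l(t,x,a)|\to+\infty$, so $H^{\ast}(t,x,v)=+\infty$. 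From this identification, $\|\D H^{\ast}(t,x,\cdot)\|\leq\sup_{a\in\B}|f(t,x,a)|\leq\lambda(t,x)$. For $v$ in the effective domain, the Jensen estimate together with Carathéodory's theorem gives $H^{\ast}(t,x,v)\leq\sup_{a\in\B}|l(t,x,a)|\leq\lambda(t,x)$, while the trivial lower bound $H^{\ast}(t,x,v)\geq-H(t,x,0)=\inf_{a\in\B}l(t,x,a)\geq-\lambda(t,x)$ is immediate. Hence $\|H^{\ast}(t,x,\D H^{\ast}(t,x,\cdot))\|\leq\lambda(t,x)$, completing (A). The main obstacle I anticipate is a careful justification of the separation argument and the ensuing equality of convex hulls, which depends on compactness of $f(t,x,\B)$ (and consequently closedness of its convex hull in the finite-dimensional setting); once this is handled, the remaining estimates are routine bookkeeping.
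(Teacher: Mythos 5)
Your proposal is correct and follows essentially the same route as the paper: the same choice $\lambda(t,x)=\sup_{a}(|f(t,x,a)|+|l(t,x,a)|)$, the same identification $\D H^{\ast}(t,x,\cdot)=\mathrm{conv}\,f(t,x,\B)$, and the same upper bound on $H^{\ast}$ by the $l$-values together with the lower bound $-|H(t,x,0)|$. The only difference is presentational: where the paper realizes the convex hull by passing to the convexified representation $(\mathbbmtt{A},\mathbbmtt{f},\mathbbmtt{l})$ over $\B^{\scriptscriptstyle N+1}\times\Delta$ and cites \cite[Lem. 4.1, Lem. 4.2]{AM} and \cite[Thm. 2.29]{R-W}, you prove the identification directly by a Jensen estimate and strict separation (only take care to use a scaling parameter other than $t$, which already denotes time).
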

\begin{proof}
Let  $(\B,f,l)$ be a representation of $H$ and $f,\,l$ satisfy \tn{(R1)-(R3)} with $C(\cdot)$, $K_R(\cdot)$. Using (R1)-(R3) one can show that $H$ satisfies (H1)-(H5).
We define a simplex in $\R^{\scriptscriptstyle N+1}$~by 
$$\Delta:= \{(\alpha_0,\dots,\alpha_{\scriptscriptstyle N})\in[0,1]^{\scriptscriptstyle N+1}\mid\alpha_0+\dots+\alpha_{\scriptscriptstyle N}=1\}.$$
Obviously, the set $\Delta$ is compact. Moreover, we define the set $\,\mathbbmtt{A}\,$
by $\mathbbmtt{A}:=\B^{\,\scriptscriptstyle N+1}\!\times\Delta$. We notice that the set $\mathbbmtt{A}$
is compact. The functions $\mathbbmtt{f}$, $\mathbbmtt{l}$ are defined for every $t\in[0,T]$, $x\in\R^{\scriptscriptstyle N}$ and $\mathbbmtt{a}=(a_0,\dots,a_{\scriptscriptstyle N},\alpha_0,\dots,\alpha_{\scriptscriptstyle N})\in \B^{\,\scriptscriptstyle N+1}\!\times\Delta=\mathbbmtt{A}$ by the formulas:
$$\mathbbmtt{f}(t,x,\mathbbmtt{a}):= \sum_{n=0}^{\scriptscriptstyle N}\alpha_n f(t,x,a_n),\qquad \mathbbmtt{l}(t,x,\mathbbmtt{a}):= \sum_{n=0}^{\scriptscriptstyle N}\alpha_n l(t,x,a_n).$$
Using (R1)-(R3) one can show that $\mathbbmtt{f}$, $\mathbbmtt{l}$ are continuous. Moreover, $\mathbbmtt{f}(t,\cdot,\mathbbmtt{a})$ and $\mathbbmtt{l}(t,\cdot,\mathbbmtt{a})$ are $K_R(t)$-Lipschitz on $\B_R$ for a.e.  $\!t\in[0,T]$ and all $\mathbbmtt{a}\in \mathbbmtt{A}$. Additionally, $|\mathbbmtt{f}(t,x,\mathbbmtt{a})|\leq C(t)(1+|x|)$ and $|\mathbbmtt{l}(t,x,\mathbbmtt{a})|\leq C(t)(1+|x|)$ for a.e.  $\!t\in[0,T]$ and all $x\in\R^{\scriptscriptstyle N}$, $\mathbbmtt{a}\in \mathbbmtt{A}$. It is not difficult to show that the triple  $(\mathbbmtt{A},\mathbbmtt{f},\mathbbmtt{l})$ is also the representation of $H$. In view of \cite[Lem. 4.2]{AM} and \cite[Thm. 2.29]{R-W} we obtain the following equalities:
\begin{equation}\label{odeq}
\mathbbmtt{f}(t,x,\mathbbmtt{A})= \mathrm{conv}f(t,x,A)=\D H^{\ast}(t,x,\cdot).
\end{equation}
We define $\lambda:[0,T]\times\R^{\scriptscriptstyle N}\to[0,\infty)$ by the formula
$$\lambda(t,x):=\sup\nolimits_{a\in\mathbbmtt{A}}\big\{\,|\mathbbmtt{f}(t,x,\mathbbmtt{a})|+|\mathbbmtt{l}(t,x,\mathbbmtt{a})|\,\big\}.$$
We observe that $\lambda$ is a continuous function. Moreover, $\lambda(t,\cdot)$ is $\zeta_R(t)$-Lipschitz on $\B_R$ for a.e.  $\!t\in[0,T]$, where $\zeta_R(t):=2K_R(t)$. Additionally, $\lambda(t,x)\leq\vartheta(t)(1+|x|)$ for a.e. $t\in[0,T]$ and all $x\in\R^{\scriptscriptstyle N}$, where $\vartheta(t):=2C(t)$. Fix $t\in[0,T]$ and $x\in\R^{\scriptscriptstyle N}$. If $\bar{v}\in\D H^{\ast}(t,x,\cdot)$, then by the equality \eqref{odeq} there exists $\bar{\mathbbmtt{a}}\in \mathbbmtt{A}$ such that   $\bar{v}=\mathbbmtt{f}(t,x,\bar{\mathbbmtt{a}})$. Therefore, by \cite[Lem. 4.1]{AM},
\begin{equation}\label{wara1}
H^{\ast}(t,x,\bar{v})=H^{\ast}(t,x,\mathbbmtt{f}(t,x,\bar{\mathbbmtt{a}}))\leq \mathbbmtt{l}(t,x,\bar{\mathbbmtt{a}})\leq \lambda(t,x).
\end{equation}
Since $(\mathbbmtt{A},\mathbbmtt{f},\mathbbmtt{l})$ is a representation of $H$, there exists $\tilde{\mathbbmtt{a}}\in\mathbbmtt{A}$ such that $H(t,x,0)=-\mathbbmtt{l}(t,x,\tilde{\mathbbmtt{a}})$
The latter equality implies, for all $v\in\D H^{\ast}(t,x,\cdot)$,
\begin{equation}\label{wara2}
-\lambda(t,x)\leq -|\mathbbmtt{l}(t,x,\tilde{\mathbbmtt{a}})|=-|H(t,x,0)|\leq H^{\ast}(t,x,v).
\end{equation}
Combining \eqref{wara1} and \eqref{wara2} we obtain $\|H^{\ast}(t,x,\D H^{\ast}(t,x,\cdot))\|\leq\lambda(t,x)$ for all  $t\in[0,T]$, $x\in\R^{\scriptscriptstyle N}$. Moreover, using \eqref{odeq}, we obtain $\|\D H^{\ast}(t,x,\cdot)\|\leq\lambda(t,x)$ for all  $t\in[0,T]$, $x\in\R^{\scriptscriptstyle N}$.  It completes the proof.
\end{proof}

\begin{proof}[Proof Theorem \ref{thmrd}]
Let $H:[0,T]\times\R^{\scriptscriptstyle N}\times\R^{\scriptscriptstyle N}\rightarrow\R$ be given.

Suppose that $H$ satisfies the condition (A). Then, in view of Theorem \ref{rtwcs1}, there exists a representation $(\B,f,l)$ of $H$ satisfying (R1)-(R3). Define $\bar{H}:[0,T]\times\R^{\scriptscriptstyle N+1}\!\times\R^{\scriptscriptstyle N+1}\to\R$~by
\begin{equation*}
\bar{H}(t,x,r,p,q):=\sup\nolimits_{a\in\B}\big\langle\big(p,q\big),\big(f(t,x,a),l(t,x,a)\big)\big\rangle.
\end{equation*}
Using (R1)-(R3), it is not difficult to prove that $\bar{H}$ satisfies $(\bar{\tn{H}}1)$-$(\bar{\tn{H}}5)$.
Moreover, $\bar{H}$ is positively homogeneous in $(p,q)$, this comes directly from its definition. Since the triple $(\B,f,l)$ is a representation of $H$, we have 
\begin{eqnarray*}
\bar{H}(t,x,r,p,-1) &=& \sup\nolimits_{a\in\B}\big\langle\big(p,-1\big),\big(f(t,x,a),l(t,x,a)\big)\big\rangle\\
&=& \sup\nolimits_{a\in\B}\big\{\big\langle p,f(t,x,a)\big\rangle-l(t,x,a)\big\}\;\;=\;\;H(t,x,p).
\end{eqnarray*}
Thus $\bar{H}$ satisfies \eqref{phc}. Consequently, the condition (B) holds.

Conversely, suppose that the condition (B) holds. Then there exists  $\bar{H}$ satisfying \eqref{phc} and $(\bar{\tn{H}}1)$-$(\bar{\tn{H}}5)$ with $c(\cdot)$, $k_R(\cdot)$. We define the set-valued map $\bar{E}:[0,T]\times\R^{\scriptscriptstyle N+1}\multimap\R^{\scriptscriptstyle N+1}$ by 
\begin{equation*}
\bar{E}(t,x,r):=\big\{(v,\eta)\in\R^{\scriptscriptstyle N+1}\;\big|\;\big\langle(v,\eta),(p,q)\big\rangle\leq \bar{H}(t,x,r,p,q)\;\;\tn{for all}\;\;(p,q)\in\R^{\scriptscriptstyle N+1}\big\}.
\end{equation*}
From results in \cite[Section 7]{HF} we deduce that $\bar{E}$ has nonempty, compact, convex values and is continuous. Moreover,  $\bar{E}(t,\cdot,\cdot)$ is $k_R(t)$-Lipschitz on $\B_R$ for a.e. $t\in[0,T]$ and $\|\bar{E}((t,x,r))\|\leq c(t)(1+|(x,r)|)$ for a.e. $t\in[0,T]$ and all $(x,r)\in\R^{\scriptscriptstyle N+1}$. Additionally,
\begin{equation*}
\bar{H}(t,x,r,p,q)=\sup\nolimits_{\;(v,\eta)\in \bar{E}(t,x,r)}\big\langle(p,q),(v,\eta)\big\rangle
\end{equation*}
for all $t\in[0,T]$, $(x,r)\in\R^{\scriptscriptstyle N+1}$, $(p,q)\in\R^{\scriptscriptstyle N+1}$.
Let $K_R(t):=20(N+1)k_R(t)$ and $C(t):=2c(t)$. By \cite[Thm. 9.6.2]{A-F} there exists a continuous  function $\bar{\mathrm{e}}:[0,T]\times\R^{\scriptscriptstyle N+1}\!\times\B\to\R^{\scriptscriptstyle N+1}$ such that  $\bar{\mathrm{e}}(t,x,r,\B)=\bar{E}(t,x,r)$ for all  $t\in[0,T]$, $(x,r)\in\R^{\scriptscriptstyle N+1}$. Moreover, the function $\bar{\mathrm{e}}(t,\cdot,\cdot,a)$ is $\frac{1}{2}K_R(t)$-Lipschitz on $\B_R$ and $|\bar{\mathrm{e}}(t,x,r,a)|\leq \frac{1}{2}C(t)(1+|(x,r)|)$ for a.e. $t\in[0,T]$ and all $(x,r)\in\R^{\scriptscriptstyle N+1}$, $a\in\B$. Next, we define functions $\bar{f}$ and $\bar{l}$ as components of a function~$\bar{\mathrm{e}}$, i.e., $\bar{\mathrm{e}}=(\bar{f},\bar{l})$. Then we have
\begin{eqnarray*}
H(t,x,p) &=& \bar{H}(t,x,0,p,-1)\\
&=& \sup\nolimits_{(v,\eta)\in \bar{E}(t,x,0)}\big\langle(p,-1),(v,\eta)\big\rangle\\
&=& \sup\nolimits_{a\in\B}\big\langle\big(p,-1\big),\big(\bar{f}(t,x,0,a),\bar{l}(t,x,0,a)\big)\big\rangle\\
&=& \sup\nolimits_{a\in\B}\big\{\big\langle p\,,\bar{f}(t,x,0,a)\big\rangle\,-\,\bar{l}(t,x,0,a)\big\}.
\end{eqnarray*}
Let $f(t,x,a):=\bar{f}(t,x,0,a)$ and $l(t,x,a):=\bar{l}(t,x,0,a)$. Then $(\B,f,l)$  is a representation of $H$ and the functions $f,\,l$ satisfy (R1)-(R3). Therefore, in view of Theorem \ref{rtwcs2}, $H$ satisfies the condition \tn{(A)}, which completes the proof.
\end{proof}

\section{Viability and Invariance Theorems}\label{section-4}

\noindent In this section we present viability and invariance theorems with unbounded differential inclusions. Working with unbounded differential inclusions we encounter new problems  which we solve in this section. Let $\pi_K(\cdot)$ be a projection of  $\R^{\scriptscriptstyle N}$ onto a nonempty closed convex subset  $K$ of $\R^{\scriptscriptstyle N}$. We denote by $\chi_K(\cdot)$ an indicator function of a  subset $K$ of $\R^{\scriptscriptstyle N}$.

\subsection{Invariance Theorem} We start by formulating the invariance theorem:
\begin{Th}[Invariance Theorem]\label{Twon}
Assume that $L$ satisfies \tn{(L1)-(L6)}. Let $U$ be a proper and lower semicontinuous function satisfying the following condition:
\begin{equation}\label{mwn}\begin{split}
& \tn{For every}\;(t,x)\in\D U\cap (0,T]\times\R^{\scriptscriptstyle N},\;\tn{every}\;(n^t,n^x,n^u)\in N_{\E U}(t,x,U(t,x)),\\[-2mm]
& \tn{every}\;v\in \D L(t,x,\cdot),\;\tn{one has}\;n^t+\langle v,n^x \rangle-n^uL(t,x,v)\geq 0.
\end{split}\end{equation}
Then for every $t_0\in[0,T)$ and every absolutely continuous function $(x,u):[t_0,T]\rightarrow\R^{\scriptscriptstyle N+1}$ satisfying $(\dot{x},\dot{u})(t)\in Q(t,x(t))$  for a.e. $t\in[t_0,T]$ and $u(T)\geq U(T,x(T))$, we obtain the following inequality $u(t)\geq U(t,x(t))$ for all $t\in[t_0,T]$. 
\end{Th}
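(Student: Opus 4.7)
The plan is to establish backward invariance of $\E U$ along the trajectory by tracking the squared distance
$$\psi(t):=\tfrac{1}{2}\,\mathrm{dist}^{2}\bigl(Z(t),\E U\bigr),\qquad Z(t):=(t,x(t),u(t)),$$
and deducing a backward Gronwall-type inequality $\dot\psi(t)\geq -\gamma(t)\psi(t)$ that, combined with the terminal condition $\psi(T)=0$, forces $\psi\equiv 0$ on $[t_{0},T]$ and hence $u(t)\geq U(t,x(t))$ throughout.

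\textbf{Setup and differentiability of $\psi$.} Because $U$ is lower semicontinuous, $\E U$ is closed in $[0,T]\times\R^{N}\times\R$, so the infimum in the distance is attained at some selection $P(t)=(\tau(t),y(t),w(t))\in\E U$, and $n(t):=Z(t)-P(t)=:(n^{t}(t),n^{x}(t),n^{u}(t))$ is a proximal normal in $N_{\E U}(P(t))$, with $n^{u}(t)\leq 0$. Since $Z$ is absolutely continuous and $\mathrm{dist}(\cdot,\E U)$ is $1$-Lipschitz, $\psi$ is absolutely continuous, and the standard squared-distance computation (independent of the choice of projection) yields, for a.e.\ $t$,
$$\dot\psi(t)=\langle n(t),\dot Z(t)\rangle=n^{t}(t)+\langle n^{x}(t),\dot x(t)\rangle+n^{u}(t)\,\dot u(t).$$

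\textbf{Use of the differential inclusion.} Since $(\dot x(t),\dot u(t))\in Q(t,x(t))$ for a.e.\ $t$, we have $\dot x(t)\in\D L(t,x(t),\cdot)$ and $\dot u(t)\leq -L(t,x(t),\dot x(t))$. Combined with $n^{u}(t)\leq 0$, this gives
$$\dot\psi(t)\;\geq\; n^{t}(t)+\langle n^{x}(t),\dot x(t)\rangle-n^{u}(t)\,L(t,x(t),\dot x(t)).\qquad(\ast)$$
If $\dot x(t)$ also lay in $\D L(\tau(t),y(t),\cdot)$, applying \eqref{mwn} at $P(t)$ with $v=\dot x(t)$ (valid because $P(t)\in\E U$ and $U$ proper force $(\tau(t),y(t))\in\D U$) would give that the right-hand side of $(\ast)$ is nonnegative, and the theorem would follow by Gronwall.

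\textbf{Main obstacle: velocity transfer.} The essential difficulty is that $\dot x(t)\in\D L(t,x(t),\cdot)$ while \eqref{mwn} demands $v\in\D L(\tau(t),y(t),\cdot)$. The plan is to construct, for a.e.\ $t$ with $\tau(t)>0$, a vector $\tilde v(t)\in\D L(\tau(t),y(t),\cdot)$ satisfying
$$|\tilde v(t)-\dot x(t)|\leq\alpha(t)\sqrt{\psi(t)},\qquad L(\tau(t),y(t),\tilde v(t))\leq L(t,x(t),\dot x(t))+\beta(t)\sqrt{\psi(t)},$$
with integrable $\alpha,\beta$. The spatial part $x(t)\to y(t)$ (at fixed $t$) is exactly the content of (L6), yielding a contribution proportional to $k_{R}(t)|x(t)-y(t)|\leq k_{R}(t)\sqrt{2\psi(t)}$. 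The temporal part $t\to\tau(t)$ is more delicate: it is handled by combining the graph-continuity property (L3) with Lemma \ref{llcq} (Cesari's characterization of $Q$), using the convex-hull structure of $Q(t,x;\varepsilon)$ to control the $L$-value by the time-gap $|\tau(t)-t|\leq\sqrt{2\psi(t)}$. This joint spatial/temporal velocity transfer, with quantitative control of both the $\R^{N}$-distance and the $L$-increment by $\sqrt{\psi(t)}$, is the main technical hurdle; all other steps are standard bookkeeping.

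\textbf{Applying \eqref{mwn} and concluding.} Inserting $v=\tilde v(t)$ into \eqref{mwn} at $P(t)$, using also the inclusion $N_{\E U}(P(t))\subset N_{\E U}(\tau(t),y(t),U(\tau(t),y(t)))$ from Section \ref{nap}, gives
$$n^{t}(t)+\langle n^{x}(t),\tilde v(t)\rangle-n^{u}(t)\,L(\tau(t),y(t),\tilde v(t))\;\geq\;0.$$
Subtracting this from $(\ast)$ and absorbing the errors into $|n(t)|(\alpha(t)+|n^{u}(t)|\beta(t))\sqrt{\psi(t)}=O(\psi(t))$ yields
$$\dot\psi(t)\;\geq\;-\gamma(t)\,\psi(t)\qquad\text{for a.e.\ }t\in(0,T],\;\tau(t)>0,$$
with $\gamma\in L^{1}([0,T])$. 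Since $|\tau(t)-t|\leq\sqrt{2\psi(t)}$, the exceptional set $\{\tau(t)=0\}$ is contained in $\{t\leq\sqrt{2\psi(t)}\}$; applying Gronwall's inequality backward from $\psi(T)=0$ rules this out and produces $\psi\equiv 0$ on $[t_{0},T]\cap(0,T]$, extended to $t_{0}$ (if $t_{0}=0$) by continuity. Therefore $Z(t)\in\E U$, i.e.\ $u(t)\geq U(t,x(t))$, for every $t\in[t_{0},T]$.
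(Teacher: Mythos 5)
Your proposal is, in essence, the ``first standard method'' that the paper itself describes in Section \ref{section-4} and explicitly explains cannot work here: track $\dist(Z(t),\E U)$ (or its square) and close a backward Gronwall inequality. The step you correctly flag as ``the main technical hurdle'' --- the velocity transfer from $(t,x(t))$ to the projection point $(\tau(t),y(t))$ --- is a genuine gap, not a technicality. Condition (L6) gives a quantitative one-sided Lipschitz transfer only in the \emph{space} variable at \emph{fixed} time ($|w-v|\leq k_R(t)|y-x|$ and $L(t,y,w)\leq L(t,x,v)+k_R(t)|y-x|$); there is no hypothesis anywhere in (L1)--(L6), nor in (H1)--(H5), that controls the variation of $\D L(\cdot,x,\cdot)$ or of $L(\cdot,x,v)$ in $t$ at a Lipschitz rate. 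The tools you invoke to bridge the time gap --- (L3) and Lemma \ref{llcq} --- are purely qualitative (sequential/topological) continuity statements and cannot produce the bound $L(\tau(t),y(t),\tilde v(t))\leq L(t,x(t),\dot x(t))+\beta(t)\sqrt{\psi(t)}$ with integrable $\beta$, nor $|\tilde v(t)-\dot x(t)|\leq\alpha(t)\sqrt{\psi(t)}$. For example, if $H(t,p)=\omega(t)|p|$ with $\omega$ continuous but only H\"older in $t$ (all of (H1)--(H5) hold), then $\D L(t,\cdot)=[-\omega(t),\omega(t)]$ moves like $|t-\tau|^{1/2}\sim\psi^{1/4}$, the error term in your differential inequality degrades to $O(\psi^{3/4})$, and $\dot\psi\geq-\gamma\psi^{3/4}$ with $\psi(T)=0$ does \emph{not} force $\psi\equiv 0$ (non-Lipschitz ODEs admit nontrivial solutions such as $c(T-t)^4$). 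So the Gronwall conclusion fails exactly where it is needed.

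The paper's actual proof takes an entirely different route precisely to avoid any time-regularity of $Q$: it parametrizes $Q$ by a continuous $\e$ on an \emph{unbounded} parameter set with the extra-property (P2) ($a=\e(t,x,a)$ for $a\in Q(t,x)$), which makes the measurable selection $a(t):=(\dot x,\dot u)(t)$ automatically integrable; it then approximates $a(\cdot)$ by continuous controls in $L^1$, solves the resulting ODEs by Picard--Lindel\"of, applies Nagumo's backward viability theorem to the $C^1$ approximating trajectories using the tangency condition derived from \eqref{mwn}, and passes to the uniform limit. If you want a distance-function argument to survive, you would have to add a Lipschitz-in-time hypothesis on $H$ (as in Galbraith's work, cited in the introduction), which is exactly what this theorem is designed to dispense with.
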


We show that the standard methods  fail when used in trying to prove the Theorem \ref{Twon}. 

\vspace{2mm}
In the first method we take $\hat{Q}(s,x,u)=\{1\}\times Q(\pi_{[0,T]}(s),\pi_{\B_R}(x))$ for all $s,u\in\R$, $x\in\R^{\scriptscriptstyle N}$, where $R=\|x([t_0,T])\|+1$. Define $y(t)=(t,x(t),u(t))$ and  $\varphi(t)=\dist(y(t),\E U)$ for all $t\in[t_0,T]$. We notice that  $\dot{y}(t)\in\hat{Q}(y(t))$ for a.e. $t\in[t_0,T]$ and $\varphi$ is absolutely continuous with $\varphi(T)=0$. The latter and \eqref{mwn} imply $\dot{\varphi}(t)\leq k\,\varphi(t)$ for a.e. $t\in[t_0,T]$, provided that $\hat{Q}$ is $k$-Lipschitz with respect to all variables; see \cite[Thm. 4.2]{P-Q}. Hence, using Gronwall's lemma, we have $\varphi(t)=0$ for all $t\in[t_0,T]$. Therefore, $u(t)\geq U(t,x(t))$ for all $t\in[t_0,T]$.  We cannot apply this method to prove Theorem \ref{Twon}, because we do not assume that Q is Lipschitz continuous with respect to the time variable. 

\vspace{2mm}
The second method is similar to the first one, but does not require the assumption that Q is Lipschitz continuous with respect to the time variable. Let $\hat{Q}(t,x,u)=Q(t,\pi_{\B_R}(x))$ for all $t\in[0,T]$,  $x\in\R^{\scriptscriptstyle N}$, $u\in\R$, where $R=\|x([t_0,T])\|+1$. Then $(\dot{x},\dot{u})(t)\in \hat{Q}(t,x(t),u(t))$  and $\hat{Q}(t,\cdot,\cdot)$ is $k_R(t)$-Lipschitz for a.e. $t\in[0,T]$. Define $\varphi(t)=\dist((x,u)(t),\E U(t,\cdot))$ for all\linebreak $t\in[t_0,T]$. Our assumptions imply that $\varphi(\cdot)$ is lower semicontinuous with $\varphi(T)=0$.\linebreak One can prove, due to \eqref{mwn}, that $d\varphi(t)(1)\leq k(t)\,\varphi(t)$ for a.e. $t\in[t_0,T]$; see \cite[p. 394]{IPA}. The latter inequality yields $\varphi\equiv 0$, provided that $d\varphi(t)(1)<\infty$  for every $t\in[t_0,T]$; see\linebreak \cite[p. 42]{H-T}. Unfortunately, without additional assumptions, we cannot say whether the latter condition is true. So, we cannot apply this method to prove Theorem \ref{Twon}.

\vspace{2mm}
The third method is based on the reduction set-valued map with unbounded values to the case of set-valued map with compact values. This kind of method was used by author in the work \cite{AM2}. Let $R=\|x([t_0,T])\|+1$. By (L3) there exists $\delta>0$ such that $Q(t,x)\cap(\B_\delta\times[-\delta,\delta])\not=\emptyset$ for all $t\in[0,T]$, $x\in\B_R$. Let  $\lambda=1+3\delta+\textrm{ess sup}_{\;t\in[t_0,T]\;}|(\dot{x},\dot{u})(t)|$. We define $\hat{Q}(t,x,u)=Q(t,\pi_{\B_R}(x))\cap (\B_\lambda\times[-\lambda,\lambda])$ for every $t\in[0,T]$,  $x\in\R^{\scriptscriptstyle N}$, $u\in\R$.  The set-valued map $\hat{Q}$ is continuous  and $\hat{Q}(t,\cdot,\cdot)$ is $k_R(t)$-Lipschitz for a.e. $t\in[0,T]$; see \cite[Prop. 4.39]{R-W}. Moreover, $(\dot{x},\dot{u})(t)\in \hat{Q}(t,x(t),u(t))$ for a.e. $t\in[0,T]$.
Now we can use invariance theorem with compact differential inclusions proved in the work of Frankowska  \cite[Thm. 3.3]{HF}. Unfortunately, we cannot use this reduction to prove our Theorem \ref{Twon}, because in general the function $\dot{u}(\cdot)$ is unbounded, i.e. $\lambda=\infty$.

\vspace{2mm}
As we mentioned earlier, Frankowska proved the invariance theorem for set-valued maps with compact values; see \cite[Thm. 3.3]{HF}. In the proof of this theorem, she used the parametrization $\e$ of the set-valued map $\hat{Q}$ with the parameter set $\B$. Indeed, in view of \cite[Thm. 9.6.2]{A-F} there exists a continuous  single-valued map $\e$ defined on $[0,T]\times\R^{\scriptscriptstyle N+1}\times\B$ into $\R^{\scriptscriptstyle N+1}$ such that  $\e(t,x,u,\B)=\hat{Q}(t,x,u)$ for all  $t\in[0,T]$, $x\in\R^{\scriptscriptstyle N}$, $u\in\R$. Moreover, $\e(t,\cdot,\cdot,\cdot)$ is $k_R(t)$-Lipschitz  for a.e. $t\in[0,T]$. Let $(\dot{x},\dot{u})(t)\in \hat{Q}(t,x(t),u(t))$ for a.e. $t\in[t_0,T]$. \linebreak
Then, by Filippov Theorem, there exists a measurable function $a:[t_0,T]\to\B$ such that\linebreak $(\dot{x},\dot{u})(t)=\e(t,x(t),u(t),a(t))$ for a.e. $t\in[t_0,T]$.  We notice that  $a(\cdot)$ is also integrable, since it is bounded. Then we can choose a sequence of continuous functions $a_i:[t_0,T]\to\B$ converging to $a(\cdot)$ in $L^1$-spaces. Let $f_i(t,x,u)=\e(t,x,u,a_i(t))$ for all  $t\in[t_0,T]$, $x\in\R^{\scriptscriptstyle N}$, $u\in\R$. We observe that $f_i$ is continuous and $f_i(t,\cdot,\cdot)$ is $k_R(t)$-Lipschitz  for a.e. $t\in[t_0,T]$.\linebreak By Picard-Lindel\"{o}f Theorem, there exist unique solutions $(\dot{x}_i,\dot{u}_i)(t)=f_i(t,x_i(t),u_i(t))$ with $(x_i,u_i)(T)=(x(T),u(T))$. In view of Viability  Theorem, we have $(x_i,u_i)(t)\in\E U(t,\cdot)$ for all $t\in[t_0,T]$. Passing to the limit as $i\to\infty$, we obtain $(x,u)(t)\in \E U(t,\cdot)$ for all $t\in[t_0,T]$. \linebreak
Obviously, we cannot use this method to prove Theorem \ref{Twon}, because parametrization of the set-valued map Q with the compact parameter set does not exist. However, there is a parametrization of the set-valued map Q with the unbounded parameter set:
\begin{Th}[\tn{\cite[Thm. 5.1]{AM1}}]\label{th-oparam}
Assume that  $Q:[0,T]\times\R^{\scriptscriptstyle N}\multimap\R^{\scriptscriptstyle M}$ satisfies  $(\mathcal{Q}1)$-$(\mathcal{Q}3)$. Then there exists a continuous function $\e:[0,T]\times\R^{\scriptscriptstyle N}\times\R^{\scriptscriptstyle M}\to\R^{\scriptscriptstyle M}$ such that
\begin{enumerate}
\item[\tn{\bf{(P1)}}] $\e(t,x,\R^{\scriptscriptstyle M})=Q(t,x)$ \;for all\,  $t\in[0,T]$, $x\in\R^{\scriptscriptstyle N}$\tn{;}\vspace{2mm}
\item[\tn{\bf{(P2)}}] $a=\e(t,x,a)$ \;for all\,  $a\in Q(t,x)$, $t\in[0,T]$, $x\in\R^{\scriptscriptstyle N}$. \vspace{2mm}
\item[]\hspace{-1.28cm}Additionally, if $Q$ satisfies $(\mathcal{Q}6)$, then we have\vspace{2mm}
\item[\tn{\bf{(P3)}}] $\big|\e(t,x,a)-\e(t,y,b)\big|\leq 10\,M\big(k_R(t)|x-y|+|a-b|\big)$  for all $x,y\in\B_R$, $a,b\in\R^{\scriptscriptstyle M}$ and almost all $t\in[0,T]$.
\end{enumerate}
\end{Th}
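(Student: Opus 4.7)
The plan is to construct $\e$ as a Steiner-type selection, adapting the construction used in the proof of Theorem~\ref{rtwcs1} to the unbounded parameter set $\R^{\scriptscriptstyle M}$. Specifically, I would set
\[
\e(t,x,a) \;:=\; S_{\!\scriptscriptstyle M}\!\bigl[\,Q(t,x)\,\cap\,\B(a,\,2\,\dist(a,Q(t,x)))\,\bigr],
\]
where $S_{\!\scriptscriptstyle M}[\,\cdot\,]$ denotes the Steiner selection on nonempty convex compact subsets of $\R^{\scriptscriptstyle M}$. The bracketed set is nonempty (it contains the metric projection of $a$ onto $Q(t,x)$), convex, and compact as the intersection of a closed convex set with a closed ball of finite radius, so the Steiner selection is well defined.

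Properties (P1) and (P2) would then follow immediately. If $a\in Q(t,x)$ then $\dist(a,Q(t,x))=0$, so the intersection reduces to the singleton $\{a\}$ and $S_{\!\scriptscriptstyle M}[\{a\}]=a$; this yields $\e(t,x,a)=a$, which is (P2), and it also shows $Q(t,x)\subset\e(t,x,\R^{\scriptscriptstyle M})$. Since the Steiner selection always lies in the set from which it selects, $\e(t,x,a)\in Q(t,x)$, which provides the reverse inclusion for (P1). For continuity, I would verify Hausdorff continuity of the auxiliary set-valued map $(t,x,a)\mapsto Q(t,x)\cap \B(a,\,2\,\dist(a,Q(t,x)))$: lower semicontinuity follows from $(\mathcal{Q}2)$ combined with continuity of $(t,x,a)\mapsto\dist(a,Q(t,x))$ (which itself rests on $(\mathcal{Q}2)$--$(\mathcal{Q}3)$), while upper semicontinuity follows from $(\mathcal{Q}3)$ and the compactness of the bounding ball. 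Continuity of $\e$ then follows by composition with the continuous Steiner selection.

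The Lipschitz estimate (P3) is where the real work lies. Assuming $(\mathcal{Q}6)$, I would bound the Hausdorff distance between $Q(t,x)\cap\B(a,2r_x)$ and $Q(t,y)\cap\B(b,2r_y)$, where $r_x:=\dist(a,Q(t,x))$ and $r_y:=\dist(b,Q(t,y))$, by a linear combination of $k_R(t)|x-y|$ and $|a-b|$ with a dimension-dependent constant. This requires careful bookkeeping because the ball centers and the radii both vary: one must control how the Aubin perturbation of $Q$ from $(\mathcal{Q}6)$ together with the translation of the ball center propagates through the intersection. The factor $2$ in the radius is crucial, as it provides the slack needed for the perturbed intersection to remain nonempty and Hausdorff-close to the original. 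Since the Steiner selection is roughly $M$-Lipschitz in Hausdorff distance on nonempty convex compact subsets of $\R^{\scriptscriptstyle M}$ (see, e.g., \cite[Sec.~9.4]{A-F}), combining these estimates would yield the bound $10M(k_R(t)|x-y|+|a-b|)$.

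The principal obstacle is precisely this final step: converting the pseudo-Lipschitz regularity $(\mathcal{Q}6)$ of the unbounded-valued map $Q$ into a genuine Lipschitz bound for $\e$. Metric projections onto moving closed convex sets are generically only H\"older continuous in the set variable under Hausdorff distance, so the naive choice $\e(t,x,a)=\pi_{Q(t,x)}(a)$ would fail to produce the required Lipschitz regularity. The Steiner-selection-over-a-slab construction sidesteps this by working with an auxiliary compact intersection whose dependence on $(t,x,a)$ can be controlled linearly, and the factor $2$ in the radius prevents the intersection from degenerating under small perturbations of the data.
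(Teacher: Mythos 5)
The paper does not actually prove this statement: it is imported verbatim from \cite[Thm.~5.1]{AM1}. Your construction
$\e(t,x,a)=S_{\scriptscriptstyle M}\bigl[\,Q(t,x)\cap\B(a,2\dist(a,Q(t,x)))\,\bigr]$
is nevertheless exactly the device the author uses (compare the map $\mathrm{e}$ displayed in the proof of Theorem~\ref{rtwcs1}, which is the same Steiner selection over the set truncated by a ball of twice the distance to the point), and your treatment of well-definedness, (P1), (P2) and continuity is correct. In particular you correctly identify that the factor $2$ places the metric projection of $a$ in the interior of the truncating ball, which is what rescues lower semicontinuity of the intersection and, eventually, the Lipschitz estimate.

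The gap is (P3). You name the right tool (Hausdorff-Lipschitz dependence of the truncated intersection on the data, composed with the dimension-Lipschitz Steiner point) but never establish the one estimate that carries the whole claim: for closed convex sets $C_1,C_2\subset\R^{\scriptscriptstyle M}$ with finite mutual excess and points $a_1,a_2\in\R^{\scriptscriptstyle M}$,
\begin{equation*}
d_{H}\bigl(C_1\cap\B(a_1,2\dist(a_1,C_1)),\;C_2\cap\B(a_2,2\dist(a_2,C_2))\bigr)\;\leq\;5\,\bigl(d_{H}(C_1,C_2)+|a_1-a_2|\bigr),
\end{equation*}
uniformly in the (possibly very large) radii. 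This is precisely \cite[Lem.~9.4.1]{A-F}, the lemma underlying the compact-valued parametrization theorem, and it is not mere bookkeeping: without it there is no control when $a$ lies far from $Q(t,x)$ and the truncating ball is enormous, and, as you yourself observe, the naive projection $\pi_{Q(t,x)}(a)$ is only H\"older in the set variable, so some such lemma is unavoidable. Once it is in hand, $(\mathcal{Q}6)$ (which is symmetric in $x$ and $y$ and hence gives $d_H(Q(t,x),Q(t,y))\leq\sqrt{2}\,k_R(t)|x-y|$ on $\B_R$ for a.e.\ $t$) feeds into it with $C_1=Q(t,x)$, $C_2=Q(t,y)$, and composing with the Lipschitz constant of the Steiner map in dimension $M$ produces the stated constant $10M$. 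As written, your argument asserts (P3) rather than proving it; citing or reproving that intersection lemma is what closes the proof.
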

\noindent It turns out that the lack of compactness of the parameter set causes a serious problem. Indeed, let $(\dot{x},\dot{u})(t)\in Q(t,x(t))$ for a.e. $t\in[t_0,T]$. If $\e(t,x,\R^{\scriptscriptstyle N+1})=Q(t,x)$ for all $t\in[0,T]$, $x\in\R^{\scriptscriptstyle N}$, then  by Filippov Theorem there exists a measurable function $a:[t_0,T]\to\R^{\scriptscriptstyle N+1}$ such that $(\dot{x},\dot{u})(t)=\e(t,x(t),a(t))$ for a.e. $t\in[t_0,T]$. Obviously, the measurable function $a(\cdot)$ may not be integrable. Therefore, we cannot approximate  $a(\cdot)$  by continuous functions in $L^1$-spaces, which prevents the continuation of the Frankowska method. It turns out that the above approximation problem can be solved using an extra-property. Property (P2) in Theorem \ref{th-oparam} is called the extra-property. We discovered this property  by researching the regularities of the value functions in \cite{AM1}. We observe that Parametryzation Theorem 9.6.2 in the monograph of Aubin-Frankowska \cite{A-F} does not contain the property of type (P2). Now, we show how this extra-property can be used to solve the approximation problem. Let $(\dot{x},\dot{u})(t)\in Q(t,x(t))$ for a.e. $t\in[t_0,T]$. Define $a(t):=(\dot{x}(t),\dot{u}(t))$ for a.e. $t\in[t_0,T]$.\linebreak In view of (P2) we have $(\dot{x},\dot{u})(t)=a(t)=\e(t,x(t),a(t))$ for a.e. $t\in[t_0,T]$. Since $(x,u)(\cdot)$ is an absolutely continuous function, $(\dot{x},\dot{u})(\cdot)$ is an integrable function. Thus, $a(\cdot)$ is also integrable. Therefore, $a(\cdot)$ can be approximated by continuous functions in $L^1$-spaces.\linebreak Summarizing, the  method of Frankowska can be applied to the unbounded case, provided that we have appropriately regular  parametrization of the set-valued map.

\begin{proof}[Proof of Theorem \ref{Twon}]
In view of Corollary \ref{wrow-wm} and Theorem \ref{tw2_rlhmh} the set-valued map $Q$ satisfies the conditions of Theorem \ref{th-oparam}. Therefore there exists a continuous function $\e$ satisfying the assertions: (P1), (P2) and (P3) of Theorem \ref{th-oparam}. Set $R:=\|x(\cdot)\|+\|u(\cdot)\|+2$, where $\|\cdot\|$ denotes the supremum norm.  

We define the function $\bar{\e}:[0,T]\times\R^{\scriptscriptstyle N}\times\R\times\R^{\scriptscriptstyle N+1}\to\R^{\scriptscriptstyle N}\times\R$ by the formula
$$\bar{\e}(t,x,u,a):=\e(t,\pi_{\B_R}(x),a).$$
Since $\e$ satisfies (P3), we conclude that for all $t\in[0,T]$, $x,y\in\R^{\scriptscriptstyle N}$, $u,w\in\R$, $a,b\in\R^{\scriptscriptstyle N+1}$, 
\begin{eqnarray}
|\bar{\e}(t,x,u,a)-\bar{\e}(t,y,w,b)| &= & |\e(t,\pi_{\B_R}(x),a)-\e(t,\pi_{\B_R}(y),b)|\nonumber\\
&\leq & 10\,(N+1)\big(\,k_R(t)\,|\pi_{\B_R}(x)-\pi_{\B_R}(y)|+|a-b|\big)\nonumber\\
&\leq & 10\,(N+1)\big(\,k_R(t)\,|x-y|+|u-w|+|a-b|\big).\label{prowblt-4}
\end{eqnarray}
By (P1) we get $\bar{\e}(t,x,u,\R^{\scriptscriptstyle N+1})=Q(t,x)$ for all $t\in[0,T]$, $x\in\B_R$, $u\in\R$. We always have $N_{\E U}(t,x,u)\subset N_{\E U}(t,x,U(t,x))$ for all $(t,x,u)\in\E U$. Therefore, in view of \eqref{mwn}, 
\begin{equation*}
\begin{split}
& \forall\;(t,x,u)\in\E U\cap (0,T]\times\B_R\times\R,\;\;\forall\; (n^t,n^x,n^u)\in N_{\E U}(t,x,u),\\
& \forall\; a\in\R^{\scriptscriptstyle N+1}\!\!,\;\;n^t+\langle\, (n^x,n^u),\,\bar{\e}(t,x,u,a)\,\rangle\,\geq\, 0.
\end{split}
\end{equation*}
By continuity of $\bar{\e}$ and properties \cite[Cor. 6.21]{R-W}, \cite[p.130]{A-F} of  normal and tangent canes 
\begin{equation}\label{prowblt-02}
\forall\,(t,x,u)\in\E U\cap (0,T]\times\B_R\times\R,\;\forall\, a\in\R^{\scriptscriptstyle N+1}\!\!,\;(-1, -\bar{\e}(t,x,u,a))\in T_{\E U}(t,x,u).
\end{equation}
Set $a(\cdot):=(\dot{x},\dot{u})(\cdot)\in Q(\cdot,x(\cdot))$. Then $(\dot{x}(t),\dot{u}(t))=\bar{\e}(t,x(t),u(t),a(t))$ for a.e. $t\in[t_0,T]$. Indeed, since $\e$ satisfies (P2) and $x([t_0,T])\subset\B_R$, we have 
$$(\dot{x}(t),\dot{u}(t))\,=\,a(t)\,=\,\e(t,x(t),a(t))\,=\,\e(t,\pi_{\B_R}(x(t)),a(t))\,=\,\bar{\e}(t,x(t),u(t),a(t)).$$
Since $a(\cdot)\in L^1([t_0,T],\R^{\scriptscriptstyle N+1})$, we can choose functions  $a_i(\cdot)\in C([t_0,T],\R^{\scriptscriptstyle N+1})$ such that $\lim_{i\to\infty}\|(a_i-a)(\cdot)\|_{L^1}=0$, where $\|\cdot\|_{L^1}$ denotes the standard norm in   $L^1([t_0,T],\R^{\scriptscriptstyle N+1})$. Since  $\e$ satisfies \eqref{prowblt-4}, in view of Picard-Lindel\"{o}f Theorem, there exist unique solutions  $(x_i,u_i)(\cdot)\in C^1([t_0,T],\R^{\scriptscriptstyle 2N+1})$  of  initial value problems
\begin{align}
&(\dot{x}_i(t),\dot{u}_i(t))=\bar{\e}(t,x_i(t),u_i(t),a_i(t))\;\;\tn{for all}\;\;t\in[t_0,T],\label{pcpc}\\
&(x_i,u_i)(T)=(x(T),u(T)).\nonumber
\end{align}
By the inequality \eqref{prowblt-4} we have
\begin{eqnarray*}
|(\dot{x}_i,\dot{u}_i)(t)-(\dot{x},\dot{u})(t)|&=& |\bar{\e}(t,x_i(t),u_i(t),a_i(t))-\bar{\e}(t,x(t),u(t),a(t))|\\
&\leq & 10\,(N+1)\big(\,k_R(t)\,|x_i(t)-x(t)|+|u_i(t)-u(t)|+|a_i(t)-a(t)|\big).
\end{eqnarray*}
Therefore, because of Gronwall’s lemma,
\begin{equation*}
\|(x_i,u_i)(\cdot)-(x,u)(\cdot)\|\leq 10\,(N+1)\,\|(a_i-a)(\cdot)\|_{L^1}\,\exp\left(\int_{t_0}^T20\,(N+1)\,(1+k_R(t))\;dt\right)\!.
\end{equation*} 
Since $\lim_{i\to\infty}\|(a_i-a)(\cdot)\|_{L^1}=0$, we have $\lim_{i\to\infty}\|(x_i,u_i)(\cdot)-(x,u)(\cdot)\|=0$. It means that  $(x_i,u_i)(\cdot)$ converge uniformly to $(x,u)(\cdot)$ on $[t_0,T]$. In particular there exists  $i_0$ such that $x_i([t_0,T])\subset\B_{R-1}$ for all $i\geq i_0$.

Now we show that  $(x_i,u_i)(t)\in \E U(t,\cdot)$ for all $t\in[t_0,T]$ and $i\geq i_0$. Let us fix  $i\geq i_0$. We define $\tau_{\bullet}:=\inf\{\,\tau\in[t_0,T]\,\mid\,\forall\,t\in[\tau,T]\;(x_i,u_i)(t)\in \E U(t,\cdot)\,\}$. Suppose, contrary to our claim, that $t_0<\tau_{\bullet}\leq T$. By lower semicontinuity of $U$ we have $(x_i,u_i)(\tau_{\bullet})\in \E U(\tau_{\bullet},\cdot)$. We define the function $f:\R\times\R^{\scriptscriptstyle N}\times\R\to\R\times\R^{\scriptscriptstyle N}\times\R$  by the formula
\begin{equation*}
f(t,x,u)=\left\{
\begin{array}{lcc}
(\,1,\,\bar{\e}(t_0,x,u,a_i(t_0)\,) & \tn{if} & t<t_0, \\[2mm]
(\,1,\,\bar{\e}(t,x,u,a_i(t)\,) & \tn{if} & t_0\leq t\leq T, \\[2mm]
(\,1,\,\bar{\e}(T,x,u,a_i(T)\,) & \tn{if} & t>T,
\end{array}
\right.
\end{equation*}
We observe that $f$ is continuous. Furthermore, in view of \eqref{prowblt-02}, we have  
\begin{equation}\label{prowblt-022}
\forall\;(t,x,u)\in\E U\cap (t_0,T+1)\times\B_R\times\R,\;\;-f(t,x,u)\in T_{\E U}(t,x,u).
\end{equation}
In view of Nagumo Backward Viability Local Theorem (\,see \cite{IPA}\,) there exist $t_{\bullet}\in(t_0,\tau_{\bullet})$ and a function $(s_{\bullet},x_{\bullet},u_{\bullet})(\cdot)$ of class $\mathcal{C}^1$ such that
\begin{displaymath}
\left\{
\begin{array}{l}
(\dot{s}_{\bullet}(t),\dot{x}_{\bullet}(t),\dot{u}_{\bullet}(t))=f(s_{\bullet}(t),x_{\bullet}(t),u_{\bullet}(t))\;\; \tn{for all}\;\; t\in[t_{\bullet},\tau_{\bullet}],\\[2mm]
(s_{\bullet},x_{\bullet},u_{\bullet})(t)\in\E U \;\;\tn{for all}\;\;t\in[t_{\bullet},\tau_{\bullet}],\\[2mm]
(s_{\bullet},x_{\bullet},u_{\bullet})(\tau_{\bullet})=(\tau_{\bullet},x_i(\tau_{\bullet}),u_i(\tau_{\bullet}))\in (t_0,T]\times\B_{R-1}\times\R.
\end{array}
\right.
\end{displaymath}
Observe that $s_{\bullet}(t)=t$ for all $t\in[t_{\bullet},\tau_{\bullet}]$. Therefore, $(x_{\bullet},u_{\bullet})(t)\in \E U(t,\cdot)$ for all $t\in[\tau_{\bullet},\tau_{\bullet}]$. Since $(x_{\bullet},u_{\bullet})(\cdot)$ is also a solution of  \eqref{pcpc} on  $[t_{\bullet},\tau_{\bullet}]$ with $(x_{\bullet},u_{\bullet})(\tau_{\bullet})=(x_i(\tau_{\bullet}),u_i(\tau_{\bullet}))$, we have $(x_{\bullet},u_{\bullet})(t)=(x_i,u_i)(t)$ for all $t\in[t_{\bullet},\tau_{\bullet}]$. Therefore $(x_i,u_i)(t)\in \E U(t,\cdot)$ for all $t\in[t_{\bullet},T]$, which contradicts the definition of $\tau_{\bullet}$.

Therefore  $(x_i,u_i)(t)\in \E U(t,\cdot)$ for all $t\in[t_0,T]$ and $i\geq i_0$. Passing to the limit as $i\to\infty$, we obtain $(x,u)(t)\in \E U(t,\cdot)$ for all $t\in[t_0,T]$.
\end{proof}

\subsection{Viability Theorem} We start by formulating the viability theorem:

\begin{Th}[Viability Theorem]\label{Twi5}
Assume that $L$ satisfies \tn{(L1)-(L5)}. Let $U$ be a proper and lower semicontinuous function satisfying the following condition:
\begin{equation}\label{swn}\begin{split}
& \tn{For every}\;(t,x)\in\D U\cap [0,T)\times\R^{\scriptscriptstyle N},\;\tn{every}\;(n^t,n^x,n^u)\in N_{\E U}(t,x,U(t,x)),\\[-1.3mm]
& \tn{there exist}\;(t_k,x_k)\to (t,x)\;\tn{and}\;\alpha_k\rightarrow 0,\;\tn{exists}\; v_k\in \D L(t_k,x_k,\cdot) \;\tn{such that} \\[-2mm]
&n^t+\langle v_k,n^x \rangle-n^uL(t_k,x_k,v_k)\leq \alpha_k\;\tn{for all}\;k\in\N.
\end{split}
\end{equation}
Then for every $(t_0,x_0)\in \D U\cap [0,T)\times\R^{\scriptscriptstyle N}$ there exists an absolutely continuous function $(x,u):[t_0,T]\rightarrow\R^{\scriptscriptstyle N}\times\R$ with $(x,u)(t_0)=(x_0,U(t_0,x_0))$ which satisfies $(\dot{x},\dot{u})(t)\in Q(t,x(t))$ for a.e. $t\in[t_0,T]$ and  $u(t)\geq U(t,x(t))$ for all $t\in[t_0,T]$.
\end{Th}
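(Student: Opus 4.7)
The plan is to translate the normal-cone hypothesis (\ref{swn}) into a tangential viability condition on the epigraph $\E U$, and then build a trajectory by a regularized Euler scheme based on the continuous parametrization $\mathrm{e}$ of $Q$ provided by Theorem \ref{th-oparam}.

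First I would reformulate (\ref{swn}) in primal-tangent form. Fix $(t,x)\in\D U\cap [0,T)\times\R^{\scriptscriptstyle N}$ and a normal $n=(n^t,n^x,n^u)\in N_{\E U}(t,x,U(t,x))$. When $n^u<0$, since $\eta\leq -L(s,y,v)$ is the sharpest bound defining $Q(s,y)$ and $n^u\eta$ is minimized at $\eta=-L(s,y,v)$, (\ref{swn}) is equivalent to
\begin{equation*}
\inf\big\{\,\langle(1,v,\eta),n\rangle\,:\,(v,\eta)\in Q(t,x;\varepsilon)\,\big\}\;\leq\;0\quad\text{for every }\varepsilon>0.
\end{equation*}
The horizontal case $n^u=0$ is reduced to $n^u<0$ by Rockafellar's Lemma~\ref{lemrlsc}. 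Since taking the closed convex hull does not change the infimum of a linear functional, Cesari's Lemma~\ref{llcq} then lifts the inequality, as $\varepsilon\to 0^+$, to the statement that for every $n\in N_{\E U}(t,x,U(t,x))$ there exists $(v_\star,\eta_\star)\in Q(t,x)$ with $\langle(1,v_\star,\eta_\star),n\rangle\leq 0$; by the polarity definition of $N_{\E U}$ and convexity $(\mathcal{Q}1)$ this yields the viability inclusion $(\{1\}\times Q(t,x))\cap\overline{\mathrm{conv}}\,T_{\E U}(t,x,U(t,x))\neq\emptyset$.

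Next I would build approximate viable trajectories. Choose $R$ large enough that, by the growth bound $(\mathcal{Q}5)$ and Gronwall's lemma, any absolutely continuous curve solving the differential inclusion with $x(t_0)=x_0$ stays in $\B_R$ on $[t_0,T]$; by $(\mathcal{Q}4)$ the velocity $v(t)$ then lies in $\B_{C_{R+1}}$. Along directions given by the tangential condition with $n^u$ normalized to $-1$, $L(t,x,v)$ is bounded above by a quantity determined by $n$ and bounded below by $-|H(t,x,0)|$, which is continuous on $[0,T]\times\B_R$, so the admissible slopes $(v,\eta)$ are uniformly bounded. Partition $[t_0,T]$ with mesh $1/k$; at each node $(t_i,x_i^k,u_i^k)\in\E U$ choose via (P1) a parameter $a_i^k\in\R^{\scriptscriptstyle N+1}$ with $\mathrm{e}(t_i,x_i^k,a_i^k)=(v_i^k,\eta_i^k)$ realizing an approximate tangent direction to $\E U$ at that node, and solve the ODE with this constant parameter on $[t_i,t_{i+1}]$. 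Continuity of $\mathrm{e}$ and a standard tangent-estimate argument give a piecewise $C^1$ curve $(x_k,u_k)$ with $\mathrm{dist}\bigl((x_k,u_k)(t),\E U\bigr)=o(1)$ uniformly on $[t_0,T]$.

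Finally I would pass to the limit. The uniform bounds on $(\dot x_k,\dot u_k)$ together with Ascoli's theorem give, along a subsequence, $(x_k,u_k)\to(x,u)$ uniformly with weak $L^1$ convergence of derivatives. Mazur's lemma, convexity $(\mathcal{Q}1)$ and the closed-graph property $(\mathcal{Q}3)$ then force $(\dot x,\dot u)(t)\in Q(t,x(t))$ for a.e.\ $t\in[t_0,T]$, while lower semicontinuity of $U$ promotes the approximate viability to $u(t)\geq U(t,x(t))$ for every $t$. The hard part will be the first step: the relaxation $(t_k,x_k)\to(t,x)$ in (\ref{swn}) places the $v_k$ in distinct fibers $\D L(t_k,x_k,\cdot)$ and the corresponding values $L(t_k,x_k,v_k)$ are a priori unbounded, so the passage to $\overline{\mathrm{conv}}$ and the limit $\varepsilon\to 0^+$ must be justified through Cesari's representation together with lower semicontinuity of $L$. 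Moreover, the absence of (L6) rules out Lipschitz control on $Q$, so the compact-valued viability theorem of \cite{HF} cannot be invoked directly, which is precisely why the extra-property (P2) of Theorem~\ref{th-oparam} is needed to run the construction with a non-compact parameter set, in the spirit of the proof of Theorem~\ref{Twon}.
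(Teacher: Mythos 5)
Your first step contains the fatal gap, and it is precisely the difficulty the theorem is designed to circumvent. You claim that passing to $\mathrm{cl}\,\mathrm{conv}\,Q(t,x;\varepsilon)$ and letting $\varepsilon\to 0^+$ upgrades \eqref{swn} to the existence of $(v_\star,\eta_\star)\in Q(t,x)$ with $\langle(1,v_\star,\eta_\star),n\rangle\leq 0$, i.e.\ to the classical condition \eqref{swnc}. This implication is false: the infimum of a linear functional over $\bigcap_{\varepsilon>0}\mathrm{cl}\,\mathrm{conv}\,Q(t,x;\varepsilon)$ is in general strictly larger than the limit of the infima over $Q(t,x;\varepsilon)$, because these sets are unbounded and the minimizing points $(v_k,-L(t_k,x_k,v_k))$ can escape to infinity in the $\eta$-direction (when $n^u=0$ the term $-n^uL(t_k,x_k,v_k)$ vanishes no matter how large $L(t_k,x_k,v_k)$ is, so nothing controls it). The paper exhibits a counterexample immediately after the statement: for $L$ and $V$ given by \eqref{expnun1}--\eqref{expnun2} and the normal $(1,-1,0)\in N_{\E V}(\xi,\xi,V(\xi,\xi))$, condition \eqref{swn} holds while \eqref{swnc} fails, since $\D L(\xi,\xi,\cdot)=\{0\}$ and $v_k\to 1$ is attainable only along $(t_k,x_k)\to(\xi,\xi)$ with $L(t_k,x_k,v_k)\to+\infty$. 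The paper's proof never performs your reduction; instead, at a proximal point $w_\varepsilon$ of the approximate trajectory it applies \eqref{swn} at $w_\varepsilon$ itself to choose $f_{k_0}=(1,v_{k_0},-L(t_{k_0},x_{k_0},v_{k_0}))$ satisfying only $\langle f_{k_0},y(t_\varepsilon)-w_\varepsilon\rangle\leq\alpha_{k_0}<|y(t_\varepsilon)-w_\varepsilon|^2$, a strictly weaker inequality than $\leq 0$ which, by Lemma \ref{lem5}, still prevents the Euler step from increasing the distance to $\E U$. Exploiting that the distance is strictly positive is what makes the relaxed condition usable, and your outline has no substitute for this.

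A second genuine gap is your claim that the admissible slopes $(v,\eta)$ are uniformly bounded. By (L4) one has $|v|\leq C_R$ and $L\geq -D$ locally, so $\dot u=-\eta-n^u$ is bounded \emph{above}; but the values $L(t_k,x_k,v_k)$ supplied by \eqref{swn} are not bounded above (your bound ``determined by $n$'' degenerates as $n^u\to 0$), so $\dot u_k$ is not bounded below and the family $\{u_k\}$ is not equi-Lipschitz: Ascoli does not apply to the $u$-component. The paper instead proves an equi-bounded-variation estimate, invokes Helly's selection theorem to get a BV pointwise limit $\tilde u$, and only then recovers an absolutely continuous solution as $u(\tau)=\tilde u(t_0)-\int_{t_0}^{\tau}\eta(t)\,dt\geq\tilde u(\tau)$ via Mazur and Fatou. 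You would also need the local-to-global decomposition (Theorem \ref{Twi5-l} followed by a Kuratowski--Zorn extension using the Gronwall bound from (L5)), since $c(\cdot)$ and $k_R(\cdot)$ need not be bounded. The remaining ingredients of your outline (Euler scheme, Mazur plus Cesari's Lemma \ref{llcq} in the limit) do match the paper's Step 3, but they cannot be run until the two issues above are repaired.
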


We notice that the boundary condition \eqref{swn} is new. In the next section we show that every lower semicontinuous solution of \eqref{rowhj} satisfies the new boundary condition \eqref{swn}; see Proposition~\ref{flw1}. However, not every lower semicontinuous solution of \eqref{rowhj} satisfies the following classic boundary condition:
\begin{equation}\label{swnc}\begin{split}
& \tn{For every}\;(t,x)\in\D U\cap [0,T)\times\R^{\scriptscriptstyle N},\;\tn{every}\;(n^t,n^x,n^u)\in N_{\E U}(t,x,U(t,x)),\\[-1.3mm]
& \tn{there exist}\; v\in \D L(t,x,\cdot) \;\tn{such that}\; n^t+\langle v,n^x \rangle-n^uL(t,x,v)\leq 0.
\end{split}\end{equation}

\noindent For instance, we consider the Lagrangian $L:[0,T]\times\R\times\R\to\overline{\R}$ given by 
\begin{equation}\label{expnun1}
L(t,x,v)=\left\{
\begin{array}{ccl}
+\infty & \tn{if} & |v|> 2,\;t\not=x, \\[1.5mm]
\frac{|v|}{2\sqrt{|t-x|}\exp\left(2\sqrt{|t-x|}\,\right)} & \tn{if} & |v|\leq 2,\;t\not=x, \\[2mm]
0 & \tn{if} & v=0,\; t=x,\\[0.5mm]
+\infty & \tn{if} & v\not=0,\;t=x.
\end{array}
\right.
\end{equation}
This Lagrangian satisfies (L1)-(L5); see \cite{AM0}.
Let $V:[0,T]\times\R\to\R$ be given by 
\begin{equation}\label{expnun2}
V(t,x)=\left\{
\begin{array}{ccl}
\exp\left(-2\sqrt{x-t}\:\right)-1 & \tn{if} & x\geq t, \\[1mm]
1-\exp\left(-2\sqrt{t-x}\:\right) & \tn{if} & 2t-T\leq x< t, \\[1mm]
1 & \tn{if} &  x<2t-T.
\end{array}
\right.
\end{equation}

\noindent Then $V$ is the value function associated with  $g(\cdot)=V(T,\cdot)$ and $L$; see \cite[Section 5]{AM0}. Moreover, $V$ is a lower semicontinuous solution of \eqref{rowhj}; see \cite[Thm. 4.2]{AM0}. We observe that $(1,-1,0)\in N_{\E V}(\xi,\xi,V(\xi,\xi))$, where $\xi\in(0,T)$. We suppose that \eqref{swnc} holds. Then  $1+(-1)\cdot 0-0\cdot L(\xi,\xi,0)\leq 0$, which is impossible. However, for $(t_k,x_k)=(\xi,\xi)-{\scriptstyle\frac{1}{2k}}(\xi,\xi\!-\!T)$, $\alpha_k={\scriptstyle\frac{1}{k}}$ and $v_k=1-{\scriptstyle\frac{1}{2k}}$, we have $1+(-1)\cdot v_k-0\cdot L(t_k,x_k,v_k)\leq \alpha_k$ for all $k\in\N$.

\vspace{2mm}
We notice that the above Lagrangian does not satisfy the condition (L6). Obviously, this condition is not required in Theorem \ref{Twi5}. Nevertheless, the natural question arises, whether  adding the condition (L6) helps. It turns out that it helps, but not much. Namely, if the Lagrangian $L$ satisfies (L1)-(L6), the set-valued map $t\to\D L(t,x,\cdot)$ is continuous in the Hausdorff sense for every $x\in\R^{\scriptscriptstyle N}$, and the set $\D L(t,x,\cdot)$ is closed for every $(t,x)\in[0,T]\times\R^{\scriptscriptstyle N}$, then the conditions \eqref{swn} and \eqref{swnc} are equivalent. In Example~\ref{ex-2} the~Lagrangian $L$ satisfies (L1)-(L6), but  $t\to\D L(t,\cdot)$ is not continuous. In Example~\ref{ex-1} the Lagrangian $L$ satisfies (L1)-(L6), but the set $\D L(x,\cdot)$ is not closed for all $x\in\R\setminus\{0\}$.

\vspace{2mm}
In the proof of Theorem \ref{Twi5} we construct a viable trajectory using Euler's broken lines and   methods from the monography of Cesari \cite{LC}, similarly to Plaskacz-Quincampoix in the proof of \cite[Theorem 3.19]{P-Q}.  There are two significant differences between proofs of Theorem 4.3 and \cite[Theorem 3.19]{P-Q}. The first one relates to the modification of the  $\varepsilon$-approximate solution. This change is needed, because in Theorem \ref{Twi5} we assume the new boundary condition \eqref{swn}, but Plaskacz-Quincampoix in \cite[Theorem 3.19]{P-Q} assume  the classic boundary condition. The modification of the proof can be done due to Lemma \ref{lem5},\linebreak which generalizes  \cite[Lemma 4.1]{P-Q}. The second difference relies on isolating a local version of  Theorem \ref{Twi5}. This decomposition is required, since in Theorem \ref{Twi5} functions $c(\cdot)$, $k_R(\cdot)$ might be unbounded, but  Plaskacz-Quincampoix in \cite[Theorem 3.19]{P-Q} work with constant functions $c(\cdot)$, $k_R(\cdot).$

\begin{Lem}\label{lem5}
If $\langle f,y-w \rangle<|y-w|^2$, then  the inequality
\begin{eqnarray}\label{lem5-1}
|y+h(f-(y-w))-w|\leq|y-w| 
\end{eqnarray} 
holds for each  $h$ satisfying 
\begin{eqnarray}\label{lem5-2}
0\leq h < 2\frac{|y-w|^2-\langle f,y-w \rangle}{|f-(y-w)|^2}.
\end{eqnarray} 
\end{Lem}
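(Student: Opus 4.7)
The inequality to be established is of the form $|u+hg|\leq|u|$ with $u:=y-w$ and $g:=f-(y-w)$, so the natural first step is to square both sides and substitute. Squaring gives the equivalent inequality
\[
|u|^{2}+2h\,\langle g,u\rangle+h^{2}|g|^{2}\;\leq\;|u|^{2},
\]
which simplifies to $h\bigl(2\langle g,u\rangle+h|g|^{2}\bigr)\leq 0$. The case $h=0$ is trivial, so we may assume $h>0$ and reduce the problem to showing $2\langle g,u\rangle+h|g|^{2}\leq 0$.

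The hypothesis $\langle f,y-w\rangle<|y-w|^{2}$ rewrites as $\langle g,u\rangle=\langle f,u\rangle-|u|^{2}<0$, so in particular $-2\langle g,u\rangle=2\bigl(|y-w|^{2}-\langle f,y-w\rangle\bigr)>0$; this also forces $g\neq 0$ (otherwise $\langle g,u\rangle=0$, contradicting strict negativity), so $|g|^{2}>0$ and the bound \eqref{lem5-2} is a well-defined positive number. Dividing through, the required inequality $h|g|^{2}\leq -2\langle g,u\rangle$ is precisely the bound \eqref{lem5-2}, and since \eqref{lem5-2} is strict we even get strict inequality in the squared form, hence certainly $|y+h(f-(y-w))-w|\leq|y-w|$.

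There is essentially no serious obstacle here: the entire proof is a one-line expansion once the substitution $u=y-w$, $g=f-u$ is made, and the only subtlety is remarking that the hypothesis $\langle f,y-w\rangle<|y-w|^{2}$ ensures both $\langle g,u\rangle<0$ and $g\neq 0$, so that the upper bound in \eqref{lem5-2} is strictly positive and the admissible range of $h$ is nonempty.
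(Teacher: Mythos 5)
Your proof is correct and follows essentially the same route as the paper's: both arguments note that the hypothesis forces $f\neq y-w$ (so the bound \eqref{lem5-2} is a well-defined positive number) and then reduce \eqref{lem5-1} to the expansion $|y-w|^2+2h\langle y-w,f-(y-w)\rangle+h^2|f-(y-w)|^2\leq|y-w|^2$, which is exactly the condition \eqref{lem5-2} after dividing by $h$. The only difference is direction of presentation (you square the target and reduce; the paper starts from \eqref{lem5-2} and rearranges forward), which is immaterial.
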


\begin{proof}
First, we show that \eqref{lem5-2} is well-defined. Indeed, if $f=y-w$, then    $$|y-w|^2=\langle y-w,y-w \rangle=\langle f,y-w \rangle<|y-w|^2,$$ which is impossible. Let $f\not=y-w$. Then, by  $\langle f,y-w \rangle<|y-w|^2$, we have
$$ 0 < 2\frac{|y-w|^2-\langle f,y-w \rangle}{|f-(y-w)|^2}. $$
Next, we show that \eqref{lem5-1} holds. By multiplying both sides of  \eqref{lem5-2} by $|f-(y-w)|^2$ and  moving the expressions from right to left we obtain
$$2\langle y-w,f \rangle-2|y-w|^2+h|f-(y-w)|^2<0.$$
We transform the above inequality as follows
\begin{eqnarray*}
0 &>& 2\langle y-w,f \rangle-2\langle y-w,y-w\rangle+h|f-(y-w)|^2 \\
  &\geq& 2\langle y-w,f-(y-w) \rangle + h|f-(y-w)|^2. 
\end{eqnarray*}
By multiplying both sides of the above inequality by $h$, we get
$$2h\langle y-w,f-(y-w) \rangle + h^2|f-(y-w)|^2\leq 0.$$
By adding $|y-w|^2$ to both sides of the above inequality, we have
$$|y-w|^2+2h\langle y-w,f-(y-w) \rangle + h^2|f-(y-w)|^2\leq |y-w|^2,$$
hence
$$|y-w+h(f-(y-w))|^2\leq|y-w|^2.$$
We observe that the above inequality implies \eqref{lem5-1}. 
\end{proof}

\begin{Th}[Local Viability Theorem]\label{Twi5-l}
Assume that $L$ satisfies \tn{(L1)-(L4)}. Let $U$ be a proper and lower semicontinuous function satisfying the following condition:
\begin{equation}\label{swnl}\begin{split}
& \tn{For every}\;(t,x,u)\in\E U\cap [0,T)\times\R^{\scriptscriptstyle N}\times\R,\;\tn{every}\;(n^t,n^x,n^u)\in N_{\E U}(t,x,u),\\[-1.3mm]
& \tn{there exist}\;(t_k,x_k)\to (t,x)\;\tn{and}\;\alpha_k\rightarrow 0,\;\tn{exists}\; v_k\in \D L(t_k,x_k,\cdot) \;\tn{such that} \\[-2mm]
&n^t+\langle v_k,n^x \rangle-n^uL(t_k,x_k,v_k)\leq \alpha_k\;\tn{for all}\;k\in\N.
\end{split}\end{equation}
Then for every  $(t_0,x_0,u_0)\in \E U\cap [0,T)\times\R^{\scriptscriptstyle N}\times\R$ there exist $T_0\in(t_0,T)$ and an absolutely continuous function $(x,u):[t_0,T_0]\rightarrow\R^{\scriptscriptstyle N}\times\R$ with $(x,u)(t_0)=(x_0,u_0)$ which satisfies $(\dot{x},\dot{u})(t)\in Q(t,x(t))$ for a.e. $t\in[t_0,T_0]$ and $u(t)\geq U(t,x(t))$ for all $t\in[t_0,T_0]$.
\end{Th}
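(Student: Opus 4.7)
The plan is to build the viable trajectory as a uniform limit of Euler broken lines, following the scheme of Plaskacz-Quincampoix \cite[Thm. 3.19]{P-Q}, modified on two points. First, because \eqref{swnl} supplies an admissible velocity $v_k \in \D L(t_k, x_k, \cdot)$ only at perturbed base points $(t_k, x_k)$ close to (but not equal to) the projection of the current node onto $\E U$, the direction-selection subroutine must tolerate this perturbation, with Lemma \ref{lem5} compensating. Second, the assertion is purely local, so a preliminary spatial-temporal localization can replace the missing $L^\infty$ bounds on $c(\cdot)$ and $k_R(\cdot)$. Concretely I would fix $R > |x_0| + 2$; by (L4) there is $C_R$ with $\|\D L(t, x, \cdot)\| \leq C_R$ on $[0, T] \times \B_R$, and for any $T_0 \in (t_0, T)$ with $(T_0 - t_0) C_R \leq 1$ every Euler trajectory with $|\dot x| \leq C_R$ issuing from $x_0$ stays in $\B_R$ on $[t_0, T_0]$. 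Lower semicontinuity of $U$ bounds $U$ from below on this compact cylinder, while continuity of $H(\cdot, \cdot, 0)$ gives $\dot u \leq -L(t, x, \dot x) \leq |H(t, x, 0)|$ bounded above, which together should yield a uniform $L^1$-bound on $\dot u_\varepsilon$ for every Euler approximant.

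For the Euler step, given $\varepsilon > 0$ and a fine partition $t_0 = s_0 < \cdots < s_N = T_0$, I would build a piecewise-affine $(x_\varepsilon, u_\varepsilon)$ keeping $\dist((s_j, x_\varepsilon(s_j), u_\varepsilon(s_j)), \E U) < \varepsilon$ at every node. At a generic node $(s, y, w)$, take a projection $(s^\star, y^\star, w^\star)$ onto $\E U$; then $(n^t, n^x, n^u) := (s, y, w) - (s^\star, y^\star, w^\star) \in N_{\E U}(s^\star, y^\star, w^\star)$ by \cite[Prop. 6.5]{R-W}. Applying \eqref{swnl} at $(s^\star, y^\star, w^\star)$ with this normal produces sequences $(t_k, x_k) \to (s^\star, y^\star)$, $\alpha_k \to 0$, and $v_k \in \D L(t_k, x_k, \cdot)$ with $n^t + \langle v_k, n^x \rangle - n^u L(t_k, x_k, v_k) \leq \alpha_k$. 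Choosing $k$ so large that both $|(t_k - s^\star, x_k - y^\star)|$ and $\alpha_k$ are negligible relative to $\dist((s, y, w), \E U)^2$, I would advance by a small step $h > 0$ along $(1, v_k, -L(t_k, x_k, v_k)) \in \{1\} \times Q(t_k, x_k)$. Lemma \ref{lem5}, applied with $f$ this velocity and $y - w \leftrightarrow (s, y, w) - (s^\star, y^\star, w^\star)$, then provides a range of admissible step sizes for which the new node is no farther from $(s^\star, y^\star, w^\star)$ than the old one, because its hypothesis $\langle f, y - w \rangle < |y - w|^2$ is exactly the inequality from \eqref{swnl} with the positive margin $\dist((s, y, w), \E U)^2$ absorbing the error $\alpha_k$.

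For the passage to the limit, the family $\{x_\varepsilon\}$ is equi-Lipschitz with constant $C_R$, and $\{u_\varepsilon\}$ has uniformly bounded variation by the localization step. Arzela-Ascoli combined with a standard compactness argument for differential inclusions with closed convex values (cf. \cite[Sect. 10.5]{LC}) should extract a subsequence along which $x_\varepsilon \to x$ uniformly, $u_\varepsilon \to u$ uniformly, and $(\dot x_\varepsilon, \dot u_\varepsilon) \to (\dot x, \dot u)$ weakly in $L^1$. Cesari's closure Lemma \ref{llcq} together with $(\mathcal{Q}1)$-$(\mathcal{Q}3)$ will then identify $(\dot x, \dot u)(t) \in Q(t, x(t))$ for a.e. $t \in [t_0, T_0]$; uniform $\varepsilon$-closeness of Euler nodes to $\E U$ together with the lower semicontinuity of $U$ gives $u(t) \geq U(t, x(t))$ on $[t_0, T_0]$, and $(x, u)(t_0) = (x_0, u_0)$ by construction.

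The hard part will be the quantitative bookkeeping in the Euler step: Lemma \ref{lem5} bounds the admissible step by $O(\dist^2/|f - (y-w)|^2)$, and $|f - (y-w)|^2$ may blow up because $L(t_k, x_k, v_k)$ is possibly large (see Example \ref{ex-1}), while I must still guarantee a uniform positive time-advance at every step so that finitely many steps cover $[t_0, T_0]$, and simultaneously a uniform $L^1$-control of $\dot u_\varepsilon^-$ (hence uniform integrability) to secure the compactness used in the limit passage. Balancing $h$, the perturbation $|(t_k, x_k) - (s^\star, y^\star)|$, and $\alpha_k$ against the current distance to $\E U$ is where the new boundary condition \eqref{swnl} and Lemma \ref{lem5} fully pay off, and is exactly the feature absent from the bounded setting of \cite[Thm. 3.19]{P-Q}.
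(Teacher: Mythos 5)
Your outline follows the same route as the paper (localization via (L4), Euler broken lines in the spirit of Plaskacz--Quincampoix with the projection onto $\E U$, condition \eqref{swnl} applied at the proximal point, Lemma \ref{lem5} to absorb the error $\alpha_k$, and Cesari's Lemma \ref{llcq} in the limit passage), but two of the difficulties you flag are not resolved by the devices you propose, and one of them is a genuine gap. First, the ``uniform positive time-advance'' you worry about is indeed unavailable: since $L(t_k,x_k,v_k)$ may be arbitrarily large, $|f-(y-w)|$ is unbounded and Lemma \ref{lem5} gives no lower bound on the admissible step. The paper does not need one: it defines $\varepsilon$-approximate solutions on half-open intervals $[t_0,t_\varepsilon)$ with a possibly infinite subdivision, proves that any such solution extends continuously to $t_\varepsilon$ and can be prolonged strictly beyond it whenever $t_\varepsilon<T_0$, and then invokes the Kuratowski--Zorn lemma to reach $T_0$. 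You should replace ``finitely many steps cover $[t_0,T_0]$'' by this maximality argument; as stated, your construction does not obviously terminate.

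The more serious gap is in the limit passage for the $u$-component. You claim $(\dot x_\varepsilon,\dot u_\varepsilon)\rightharpoonup(\dot x,\dot u)$ weakly in $L^1$ and say that a uniform $L^1$ bound on $\dot u_\varepsilon^-$ gives uniform integrability; it does not. The available bounds are $\dot u_n\le D+1$ pointwise and $\int|\dot u_n|\le 2|u_0|+(3T_0+1)(D+1)$ (the paper's \eqref{2nn} and the variation estimate), and a uniformly $L^1$-bounded sequence of negative parts can still concentrate, so Dunford--Pettis does not apply to $\dot u_n$ and the weak $L^1$ limit need not exist; correspondingly the pointwise limit of $u_n$ need not be absolutely continuous. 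The paper's resolution is different and is the key idea you are missing: apply Helly's selection theorem to get $u_n\to\tilde u$ pointwise with $\tilde u$ merely of bounded variation; apply Mazur's lemma and Fatou's lemma to the components $\eta_n$ (which are bounded below by $-D$) to produce an integrable $\eta$ with $(\dot x(t),-\eta(t))\in Q(t,x(t))$ a.e. and $\int_{t_0}^{\tau_0}\eta(t)\,dt\le\tilde u(t_0)-\tilde u(\tau_0)$ for all $\tau_0$; and then \emph{discard} $\tilde u$ in favour of the absolutely continuous majorant $u(\tau):=\tilde u(t_0)-\int_{t_0}^{\tau}\eta(t)\,dt\ge\tilde u(\tau)\ge U(\tau,x(\tau))$. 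Since the theorem only asserts $u\ge U(\cdot,x(\cdot))$ rather than equality, this substitution is legitimate and circumvents the failure of weak compactness entirely. Without this (or an equivalent device) your limit argument does not close.
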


\begin{Rem}
In Theorem \ref{Twi5-l} the condition (L5) is not required. Moreover, the conditions\linebreak \eqref{swn} and \eqref{swnl} are equivalent, since $N_{\E U}(t,x,u)\subset N_{\E U}(t,x,U(t,x))$ for   $(t,x,u)\in\E U$.
\end{Rem}

\begin{proof}[Proof Theorem \ref{Twi5-l}]
Fix  $(t_0,x_0,u_0)\in \E U\cap [0,T)\times\R^{\scriptscriptstyle N}\times\R$ and set $R:=|x_0|+2$. Define $L_{\bullet}(t,x,u,v):=L(\pi_{\scriptscriptstyle[0,T]}(t),\pi_{\scriptscriptstyle\B_R}(x),v)$ and $Q_{\bullet}(t,x,u):=Q(\pi_{\scriptscriptstyle[0,T]}(t),\pi_{\scriptscriptstyle\B_R}(x))$ for every $t,u\in\R$, $x,v\in\R^{\scriptscriptstyle N}$. In view of (L4) there exists a constant $C_R\geq 0$ such that  $\|\D L(t,x,\cdot)\|\leq C_R$ for every $(t,x)\in[0,T]\times\B_R$. Therefore $\|\D L_{\bullet}(t,x,u,\cdot)\|\leq C_R$ for  every $t,u\in\R$, $x\in\R^{\scriptscriptstyle N}$. Since functions $U$ and $L$ are proper and lower semicontinuous, there exists a constant $D>0$ such that $U(t,x)\geq -D$ and $L(t,x,v)\geq -D$ for every  $t\in[0,T]$, $x\in\B_R$, $v\in\B_{C_R}$. Therefore $L_{\bullet}(t,x,u,v)\geq-D$ for  every $t,u\in\R$, $x,v\in\R^{\scriptscriptstyle N}$.

\bf{\textsc{Step 1.} Definition and properties of an $\pmb{\varepsilon}$-approximate solution.} 
Let $\varepsilon$ and $t_\varepsilon$ satisfy
\begin{align}
&0<\varepsilon\leq\varepsilon_0:=(T-t_0)\,[2(1+T)]^{-1},\label{ne}\\[1mm]
&t_0<t_\varepsilon\leq T_0:=\min\left\{t_0+[1+C_R]^{-1},t_0+(T-t_0)\,2^{-1}\right\}.\label{nee}
\end{align}
We say that a family $\sum=\{[t_j,\tau_j)\mid j\in J\}$ of nonempty intervals is a
subdivision of the interval  $[t_0,t_\varepsilon)$ if $[t_j,\tau_j)\cap[t_i,\tau_i)=\emptyset$ for all $j\not=i$ and $[t_0,t_\varepsilon)=\bigcup_{j\in J}[t_j,\tau_j)$.
Let  $y(\cdot)=(s(\cdot),x(\cdot),u(\cdot))$ be an absolutely continuous function defined on $[t_0,t_\varepsilon)$ into $\R\times\R^{\scriptscriptstyle N}\times\R$ such that  $y(t_0)=(t_0,x_0,u_0)$. We say that a triple $\big([t_0,t_\varepsilon),y(\cdot),\sum\big)$ is an $\varepsilon$-approximate\linebreak solution if the following inequality holds 
\begin{equation}\label{neea}
\dist(y(t),\E U)\leq \varepsilon\;\;\tn{for all}\;\, t\in[t_0,t_\varepsilon),
\end{equation}
and for all $j\in J$ there exist $f_j$, $w_j$, $\bar{w}_j$ such that
\begin{enumerate}
\item[\bf{(i)}] $\forall$ $t\in[t_j,\tau_j)$\;\; $y(t)=y(t_j)+(t-t_j)(f_j-(y(t_j)-w_j))$,
\item[\bf{(ii)}] $f_j\in\{1\}\times Q_{\bullet}(\bar{w}_j)$,\;\; $|\bar{w}_j-w_j|\leq \varepsilon$,\;\;$w_j\in \E U$,
\item[\bf{(iii)}] $|w_j-y(t_j)|=\dist(y(t_j),\E U)$,
\item[\bf{(iv)}] If $y(t_j)\in \E U$, then $(\tau_j-t_j)|f_j|\leq\varepsilon$,
\item[\bf{(v)}] If $y(t_j)\notin \E U$, then $|y(t)-w_j|\leq|y(t_j)-w_j|$ for all $t\in[t_j,\tau_j)$.
\end{enumerate}
We show that an $\varepsilon$-approximate solution satisfies the following inequality  
\begin{equation}\label{n5}
|w_j-y(t)|\leq \varepsilon\;\;\;\tn{for all}\;\;t\in[t_j,\tau_j).
\end{equation}
Indeed, suppose that $y(t_j)\in \E U$. Then by (iii), (i) we get $w_j=y(t_j)$, $|w_j-y(t)|=(t-t_j)|f_j|$.  The latter, together with (iv), implies (\ref{n5}). Now, suppose that  $y(t_j)\notin \E U$. Then by (iii), \eqref{neea} we get $|w_j-y(t_j)|\leq\varepsilon$. The latter, together with (v), implies (\ref{n5}).

We need the following notations:
\begin{align*}
&(t_j^w,x_j^w,u_j^w)=w_j\,,\;\; (n_j^t,n_j^x,n_j^u)=y(t_j)-w_j\,,\\
&(1,v_j,-\eta_j)=f_j\,,\; \tn{where}\; \eta_j\geq L_{\bullet}(\bar{w}_j,v_j)\,\; \tn{and}\;|\bar{w}_j-w_j|\leq \varepsilon.
\end{align*}
We also need auxiliary functions defined by
\begin{equation*}
\begin{array}{lll}
n^u=\sum\limits_{j\in J}\chi_{[t_j,\tau_j)}n_j^u\,, & n^x=\sum\limits_{j\in J}\chi_{[t_j,\tau_j)}n_j^x\,, & n^t=\sum\limits_{j\in J}\chi_{[t_j,\tau_j)}n_j^t\,,\\
v=\sum\limits_{j\in J}\chi_{[t_j,\tau_j)}v_j\,, & \eta=\sum\limits_{j\in J}\chi_{[t_j,\tau_j)}\eta_j\,.
\end{array}
\end{equation*}
Note that in view of (i) and  (\ref{n5}) we have 
\begin{align}
&\dot{s}(t)=1-n^t(t)\;\; \tn{for a.e.}\;\, t\in[t_0,t_\varepsilon)\,, \hspace{12.5mm} |n^t(t)|\leq\varepsilon\;\;\tn{for all}\;\,t\in[t_0,t_\varepsilon)\,,\label{npo2}\\
&\dot{x}(t)=v(t)-n^x(t)\;\; \tn{for a.e.}\;\, t\in[t_0,t_\varepsilon)\,, \hspace{7.5mm} |n^x(t)|\leq\varepsilon\;\;\tn{for all}\;\,t\in[t_0,t_\varepsilon)\,,\label{npo}\\
&\dot{u}(t)=-\eta(t)-n^u(t)\;\;\tn{for a.e.}\;\, t\in[t_0,t_\varepsilon)\,,\;\;\;\;\;|n^u(t)|\leq\varepsilon\;\;\tn{for all}\;\,t\in[t_0,t_\varepsilon)\,.\label{npo1}
\end{align}

We show that the following properties hold: 	
\begin{equation}\label{ofax}
\begin{array}{lll}
|s(t)-t|\leq T_0\,\varepsilon,\;\;|x(t)|\leq R-1 &\tn{for all}& t\in[t_0,t_\varepsilon),\\
|\dot{s}(t)|\leq 2\;\;\,\tn{and}\;\;\,|\dot{x}(t)|\leq C_R+1 &\tn{for a.e.}& t\in[t_0,t_\varepsilon).
\end{array}
\end{equation}
Indeed, by \eqref{ne} and \eqref{npo2}  we get $|\dot{s}(t)|\leq 1+|n^t(t)|\leq 2$. Moreover, $s(\cdot)$ is absolutely continuous, so $|s(t)-t|=|t_0+\int_{t_0}^t\dot{s}(\varsigma)\,d\varsigma-t|=|t_0+\int_{t_0}^t(1-n^t(\varsigma))\,d\varsigma-t| =|\int_{t_0}^tn^t(\varsigma)\,d\varsigma|\leq T_0\,\varepsilon$. Since $v_j\in\D L_{\bullet}(\bar{w}_j,\cdot)$, we get $|v_j|\leq C_R$. The latter, together with \eqref{ne} and \eqref{npo}, implies $|\dot{x}(t)|\leq|v(t)|+|n^x(t)|\leq C_R+1$. Hence, using absolute continuity of $x(\cdot)$, we have $|x(t)|\leq|x_0|+\int_{t_0}^t|\dot{x}(\varsigma)|\,d\varsigma\leq|x_0|+(t_\varepsilon-t_0)(C_R+1)$. Therefore, in view of \eqref{nee}, we obtain $|x(t)|\leq|x_0|+(T_0-t_0)(C_R+1)\leq|x_0|+1=R-1$.

We also show that the following properties hold:
\begin{equation}\label{2nn}
\begin{array}{lll}
|u(t)|\leq |u_0|+(T_0+1)(D+1) &\tn{for all}& t\in[t_0,t_\varepsilon),\\
\dot{u}(t)\leq D+1 \;\;\tn{and}\;-D\leq\eta(t) &\tn{for a.e.}& t\in[t_0,t_\varepsilon).
\end{array}
\end{equation}
Indeed, by $\eta_j\geq L_{\bullet}(\bar{w}_j,v_j)\geq -D$  we get $\eta(t)\geq -D$.  The latter, together with \eqref{npo1}, implies $\dot{u}(t)=-\eta(t)-n^u(t)\leq D+1$. Hence, using absolute continuity of $u(\cdot)$, we obtain $u(t)=u_0+\int_{t_0}^t\dot{u}(\varsigma)\,d\varsigma\leq |u_0|+T_0(D+1)$. The inequality $|u(t)|\leq |u_0|+(T_0+1)(D+1)$ will be proved once we prove that $-D-1\leq u(t)$.
Let $w_t=(s_t,x_t,u_t)\in \E U$ satisfy $|y(t)-w_t|=\dist(y(t),\E U)$. Then, using \eqref{neea}, we obtain $|y(t)-w_t|\leq \varepsilon\leq 1$. Therefore $|x(t)-x_t|\leq 1$ and $|u(t)-u_t|\leq 1$. Hence, using \eqref{ofax}, we get $|x_t|\leq |x(t)|+1\leq R$. Moreover, because of  $w_t=(s_t,x_t,u_t)\in \E U$, we have $s_t\in[0,T]$ and $U(s_t,x_t)\leq u_t$. Since $(s_t,x_t)\in[0,T]\times\B_R$, we obtain $-D\leq U(s_t,x_t)$. Therefore $-D\leq U(s_t,x_t)\leq u_t\leq u(t)+1$. So $-D-1\leq u(t)$.

\bf{\textsc{Step 2.} Extension of an $\pmb{\varepsilon}$-approximate solution.} We first show that if a triple $\big([t_0,t_\varepsilon),y(\cdot),\sum\big)$ is an $\varepsilon$-approximate solution, then the limit $\lim_{t\to t_\varepsilon}y(t)$ exists and $y(\cdot)$ is an absolutely continuous function  on $[t_0,t_\varepsilon]$, where $y(t_\varepsilon):=\lim_{t\to t_\varepsilon}y(t)$. Hence, by \eqref{neea},
\begin{equation}\label{neealg}
\dist(y(t_\varepsilon),\E U)=\lim\nolimits_{t\to t_\varepsilon}\dist(y(t),\E U)\leq\varepsilon.
\end{equation}
Since $(s,x)(\cdot)$ is absolutely continuous on $[t_0,t_\varepsilon)$ and  $(\dot{s},\dot{x})(\cdot)$ is bounded almost everywhere on $[t_0,t_\varepsilon)$ (from \eqref{ofax}), it follows that the limit  $\lim_{t\to t_\varepsilon}(s,x)(t)$ exists and $(s,x)(\cdot)$ is absolutely continuous  on $[t_0,t_\varepsilon]$, where $(s,x)(t_\varepsilon):=\lim_{t\to t_\varepsilon}(s,x)(t)$. Let $t_n:=t_\varepsilon-{\scriptstyle\frac{1}{2n}}(t_\varepsilon-t_0)$ for all $n\in\N$. In view of \eqref{2nn} we obtain
\begin{eqnarray}\label{var-u-1}
\int_{t_0}^{t_\varepsilon}\chi_{[t_0,t_n]}(t)\,|\dot{u}(t)|\,dt &=& \int_{t_0}^{t_n}2\max\{\dot{u}(t),0\}\,dt - \int_{t_0}^{t_n}\dot{u}(t)\,dt\nonumber\\
&\leq & 2(t_n-t_0)(D+1)-u(t_n)+u(t_0)\nonumber\\
&\leq & 2|u_0|+(3T_0+1)(D+1).
\end{eqnarray}
Due to the above inequality and Lebesgue's Monotone Convergence Theorem we obtain that $\dot{u}(\cdot)$ is integrable on $[t_0,t_\varepsilon]$. Hence it follows that the limit $\lim_{t\to t_\varepsilon}u(t)$ exists and $u(\cdot)$ is absolutely continuous  on $[t_0,t_\varepsilon]$, where $u(t_\varepsilon):=\lim_{t\to t_\varepsilon}u(t)$.

Now, we show that if  $\big([t_0,t_\varepsilon),y(\cdot),\sum\big)$ is an $\varepsilon$-approximate solution with $t_\varepsilon<T_0$, then there exists an $\varepsilon$-approximate solution $\big([t_0,\tau_\varepsilon),\bar{y}(\cdot),\bar{\sum}\big)$ with $t_\varepsilon<\tau_\varepsilon<T_0$ which satisfies $y(t)=\bar{y}(t)$ for all $t\in[t_0,t_\varepsilon)$ and $\sum\subset\bar{\sum}$. Indeed, suppose that $y(t_\varepsilon)\in \E U$. Then we take an arbitrary $f_\varepsilon\in\{1\}\times Q_{\bullet}(y(t_\varepsilon))$ and $\tau_\varepsilon\in(t_\varepsilon,T_0)$ such that $(\tau_\varepsilon-t_\varepsilon)|f_\varepsilon|\leq\varepsilon$. Let $w_\varepsilon=y(t_\varepsilon)=\bar{w}_\varepsilon$.\linebreak We define the function $\bar{y}(\cdot)$ on $[t_0,\tau_\varepsilon)$ by the formula $\bar{y}(t):=y(t)$ for every $t\in[t_0,t_\varepsilon)$ and $\bar{y}(t):=y(t_\varepsilon)+(t-t_\varepsilon)(f_\varepsilon-(y(t_\varepsilon)-w_\varepsilon))$ for every $t\in[t_\varepsilon,\tau_\varepsilon)$. We define the set $\bar{\sum}$  by $\sum\cup\big\{[t_\varepsilon,\tau_\varepsilon)\big\}$. 
We notice that $\big([t_0,\tau_\varepsilon),\bar{y}(\cdot),\bar{\sum}\big)$ defined as above satisfies \eqref{neea} and (i)-(v). In particular, due to $y(t_0)\in \E U$, the family of all $\varepsilon$-approximate solutions is nonempty. Suppose that $y(t_\varepsilon)\notin \E U$. Then, due to \eqref{neealg}, we have $0<\dist(y(t_\varepsilon),\E U)\leq\varepsilon$. Let $w_\varepsilon=(s_\varepsilon,x_\varepsilon,u_\varepsilon)\in \E U$ be a proximal point of $y(t_\varepsilon)$ in $\E U$, then $w_\varepsilon\not=y(t_\varepsilon)$ and
\begin{equation}\label{dis}
|y(t_\varepsilon)-w_\varepsilon|=\dist(y(t_\varepsilon),\E U)\leq\varepsilon.
\end{equation}
Due to the properties of the normal cone from Subsection \ref{nap} we have 
\begin{equation}\label{wpgh}
y(t_\varepsilon)-w_\varepsilon\in N_{\E U}(w_\varepsilon).
\end{equation}
Since $(s_\varepsilon,x_\varepsilon,u_\varepsilon)\in \E U$, we get  $s_\varepsilon\in[0,T]$. We show that $s_\varepsilon<T$. Indeed, by \eqref{dis}, \eqref{ofax} we get $|s_\varepsilon-t_\varepsilon|\leq|s_\varepsilon-s(t_\varepsilon)|+|s(t_\varepsilon)-t_\varepsilon|\leq\varepsilon+T_0\,\varepsilon$.
Hence, using \eqref{ne}, \eqref{nee}, we obtain
\begin{equation*}
s_\varepsilon\,\leq\, t_\varepsilon+\varepsilon+T_0\,\varepsilon\,<\,t_\varepsilon+\varepsilon\,(1+T)\,\leq\, t_0+{\textstyle\frac{1}{2}}(T-t_0)+{\textstyle\frac{1}{2}}(T-t_0)=T.
\end{equation*}
Therefore $(s_\varepsilon,x_\varepsilon,u_\varepsilon)\in \E U\cap[0,T)\times\R^{\scriptscriptstyle N}\times\R$. Moreover, we know that \eqref{wpgh}  holds true. 
In view of \eqref{swnl} there exist $(t_k,x_k)\to(s_\varepsilon,x_\varepsilon)$, $\alpha_k\rightarrow 0$ and $v_k\in \D L(t_k,x_k,\cdot)$ such that 
\begin{equation}\label{lneq}
\langle (1,v_k,-L(t_k,x_k,v_k)), y(t_\varepsilon)-w_\varepsilon\rangle\leq \alpha_k\;\;\tn{for all}\;\;k\in\N.
\end{equation}
Let $f_k:=(1,v_k,-L(t_k,x_k,v_k))$ and $\bar{w}_k:=(t_k,x_k,u_\varepsilon)$ for all $k\in\N$. One can choose $k_0\in\N$ such that $\alpha_{k_0}<|y(t_\varepsilon)-w_\varepsilon |^2$ and $|\bar{w}_{k_0}-w_\varepsilon|\leq\varepsilon$. Therefore $\langle f_{k_0},y(t_\varepsilon)-w_\varepsilon\rangle<|y(t_\varepsilon)-w_\varepsilon |^2$. In view of Lemma \ref{lem5} we can choose $\tau_\varepsilon$ such that
$t_\varepsilon<\tau_\varepsilon<T_0$ and
\begin{equation}\label{ccld}
|y(t_\varepsilon)+(t-t_\varepsilon)(f_{k_0}-(y(t_\varepsilon)-w_\varepsilon))-w_\varepsilon|\leq |y(t_\varepsilon)-w_\varepsilon|\;\;\tn{for all}\;\;t\in[t_\varepsilon,\tau_\varepsilon).
\end{equation}
Set $\bar{w}_\varepsilon:=\bar{w}_{k_0}$ and $f_\varepsilon:=f_{k_0}$. We define the function $\bar{y}(\cdot)$ on $[t_0,\tau_\varepsilon)$ by the formula $\bar{y}(t):=y(t)$ for all $t\in[t_0,t_\varepsilon)$ and $\bar{y}(t):=y(t_\varepsilon)+(t-t_\varepsilon)(f_\varepsilon-(y(t_\varepsilon)-w_\varepsilon))$ for all $t\in[t_\varepsilon,\tau_\varepsilon)$. We define the set $\bar{\sum}$  by $\sum\cup\big\{[t_\varepsilon,\tau_\varepsilon)\big\}$. By \eqref{ccld} we get $|\bar{y}(t)-w_\varepsilon|\leq |y(t_\varepsilon)-w_\varepsilon|$ for all $t\in[t_\varepsilon,\tau_\varepsilon)$.\linebreak
The latter, together with \eqref{dis}, implies that $\dist(\bar{y}(t),\E U) \leq \varepsilon$ for all $t\in[t_\varepsilon,\tau_\varepsilon)$. Since $|\bar{w}_{k_0}-w_\varepsilon|\leq\varepsilon$, we have $|x_{k_0}-x_\varepsilon|\leq\varepsilon$. In view of \eqref{dis} we get $|x(t_\varepsilon)-x_\varepsilon|\leq\varepsilon$. Therefore $|x_{k_0}-x(t_\varepsilon)|\leq 2\varepsilon$. The latter, together with \eqref{ofax}, implies $$|x_{k_0}|\leq|x(t_\varepsilon)|+2\varepsilon\leq R-1+2\varepsilon\leq R-1+1=R.$$
Since ($t_{k_0},x_{k_0})\in[0,T]\times\B_R$, we have
\begin{equation*}
f_\varepsilon=f_{k_0}\in\{1\}\times Q(t_{k_0},x_{k_0})=\{1\}\times Q_{\bullet}(\bar{w}_{k_0})=\{1\}\times Q_{\bullet}(\bar{w}_\varepsilon).
\end{equation*}
Therefore the triple $\big([t_0,\tau_\varepsilon),\bar{y}(\cdot),\bar{\sum}\big)$ defined above satisfies \eqref{neea} and (i)-(v).

In the family of all $\varepsilon$-approximate solutions we define a partial order relation\linebreak $\big([t_0,t_\varepsilon),y(\cdot),\sum\big)\!\preccurlyeq\!\big([t_0,\tau_\varepsilon),\bar{y}(\cdot),\bar{\sum}\big)$ if and only if $[t_0,t_\varepsilon)\!\subset\![t_0,\tau_\varepsilon)$, $\sum\subset\bar{\sum}$, and $y(t)\!=\!\bar{y}(t)$ for all $[t_0,t_\varepsilon)$. We observe that in the family of all $\varepsilon$-approximate solutions every chain   $\left\{\!\big([t_0,t_{\varepsilon}^{\scriptscriptstyle\lambda}),y_\lambda(\cdot),\sum_\lambda\big)\!\right\}_{\!\!\lambda\in\Lambda}$  has an upper bound $\big([t_0,\tau_\varepsilon),\bar{y}(\cdot),\bar{\sum}\big)$, where $[t_0,\tau_\varepsilon):=\bigcup_{\lambda\in\Lambda}[t_0,t_{\varepsilon}^{\scriptscriptstyle\lambda})$, $\bar{\sum}:=\bigcup_{\lambda\in\Lambda}\bar{\sum}$, and $\bar{y}(t):=y_\lambda(t)$ for all $t\in[t_0,t_{\varepsilon}^{\scriptscriptstyle\lambda})$, $\lambda\in\Lambda$. By Kuratowski-Zorn Lemma there exists a maximal element $\big([t_0,t_\varepsilon^*),y_*(\cdot),\sum_*\big)$. We show that $t_\varepsilon^*=T_0$. Suppose, contrary to our claim, that $t_\varepsilon^*<T_0$. Then there exists  an $\varepsilon$-approximate solution $\big([t_0,\tau_\varepsilon^*),\bar{y}_*(\cdot),\bar{\sum}_*\big)$ such that $\big([t_0,t_\varepsilon^*),y_*(\cdot),\sum_*\big)\prec\big([t_0,\tau_\varepsilon^*),\bar{y}_*(\cdot),\bar{\sum}_*\big)$, which contradicts the definition of the maximal element. So there is an $\varepsilon$-approximate solution on the whole interval $[t_0,T_0]$.

\bf{\textsc{Step 3.} Convergence of approximate solutions.} Let $y_n(\cdot)=(s_n(\cdot),x_n(\cdot),u_n(\cdot))$ be an $\frac{1}{n}$-approximate solution defined on $[t_0,T_0]$ for all $n\in\N_0:=\N\cap[1/\varepsilon_0,\infty)$. 

The function $(s_n,x_n)(\cdot)$ is absolutely continuous on $[t_0,T_0]$ and $(s_n,x_n)(t_0)=(t_0,x_0)$ for every $n\in\N_0$. In view of \eqref{ofax} the family $\{(s_n,x_n)(\cdot)\}_{n\in\N_0}$ is equi-bounded and the family $\{(\dot{s}_n,\dot{x}_n)(\cdot)\}_{n\in\N_0}$ is equi-absolutely integrable. Therefore, in view of Arzel\`a-Ascoli and Dunford-Pettis Theorems, there exists a subsequence (we do not relabel) such that $(s_n,x_n)(\cdot)$ converges uniformely to an absolutely continuous function $(s,x):[t_0,T_0]\to\R^{\scriptscriptstyle 1+N}$  and $(\dot{s}_n,\dot{x}_n)(\cdot)$ converges weakly in $L^1([t_0,T_0],\R^{\scriptscriptstyle 1+N})$ to $(\dot{s},\dot{x})(\cdot)$. Moreover, by  \eqref{ofax}, we get $|s(t)-t|=\lim_{n\to\infty}|s_n(t)-t|\leq\lim_{n\to\infty}\frac{1}{n}T_0=0$ and $|x(t)|=\lim_{n\to\infty}|x_n(t)|\leq R-1$. Therefore $s(t)=t$ and $x(t)\in\B_R$ for all $t\in[t_0,T_0]$. Additionally, $(s,x)(t_0)=(s_0,x_0)$.

The function $u_n(\cdot)$ is absolutely continuous on $[t_0,T_0]$ and $u_n(t_0)=u_0$ for every $n\in\N_0$. In view of \eqref{2nn} the family $\{u_n(\cdot)\}_{n\in\N_0}$ is equi-bounded and for all $n\in\N_0$ we get
\begin{eqnarray*}
\mathrm{Var}_{[t_0,T_0]}u_n(\cdot) &\leq &\int_{t_0}^{T_0}|\dot{u}_n(t)|\,dt \;\;=\;\; \int_{t_0}^{T_0}2\max\{\dot{u}_n(t),0\}\,dt - \int_{t_0}^{T_0}\dot{u}_n(t)\,dt\\
&\leq & 2(T_0-t_0)(D+1)-u_n(T_0)+u_n(t_0)\;\;\leq\;\;  2|u_0|+(3T_0+1)(D+1).
\end{eqnarray*}
The above inequality implies that the variations of functions $u_n(\cdot)$ on $[t_0,T_0]$ are equi-bounded. Therefore, in view of Helly Theorem (cf. Theorem 15.1.i in \cite{LC}), there exists\linebreak a subsequence (we do not relabel) such that $u_n(\cdot)$ converges pointwise (everywhere) to a bounded variation function $\tilde{u}:[t_0,T_0]\to\R$. Therefore, $\tilde{u}(t_0)=\lim_{n\to\infty}u_n(t_0)=u_0$.\linebreak  Obviously, the function $\tilde{u}(\cdot)$ may not be absolutely continuous.

Since $\dot{x}_{\sigma+k}(\cdot)\rightharpoonup\dot{x}(\cdot)$ as $k\to\infty$ in $L^1([t_0,T_0],\R^{\scriptscriptstyle N})$ for all $\sigma\in\N_0$, by the Mazur Theorem, there exist real numbers $\lambda_{k,N}^{\sigma}\geq 0$ for  $k=1,2,\ldots,N$ and $N\in\N$  such that $\sum_{k=1}^{N}\lambda_{k,N}^{\sigma}=1$ and $\sum_{k=1}^{N}\lambda_{k,N}^{\sigma}\dot{x}_{\sigma+k}(\cdot)\longrightarrow\dot{x}(\cdot)$ as $N\to\infty$ in $L^1([t_0,T_0],\R^{\scriptscriptstyle N})$ for all $\sigma\in\N_0$. Then for all $\sigma\in\N_0$ there exists an increasing sequence  $\{N_{m}^\sigma\}_{m\in\N}$ such that 
\begin{equation*}
z_{m}^{\sigma}(t):=\sum_{k=1}^{N_{m}^\sigma}\lambda_{k,N_{m}^\sigma}^{\sigma}\dot{x}_{\sigma+k}(t)\longrightarrow\dot{x}(t)\;\;\;\tn{as}\;\;\;m\to\infty\;\;\;\;
\tn{a.e.}\;\;\;t\in[t_{0},T_0]. \label{g1}
\end{equation*} 
For a.e. $t\in[t_0,T_0]$  we set
$$\eta_{m}^{\sigma}(t):=\sum_{k=1}^{N_{m}^\sigma}\lambda_{k,N_m^\sigma}^{s}\eta_{\sigma+k}(t),$$
by \eqref{2nn} we get $\eta_{m}^{\sigma}(t)\geq -D$ for a.e. $t\in[t_0,T_0]$ and all $\sigma\in\N_0$, $m\in\N$, and
$$\eta^{\sigma}(t):=\liminf\nolimits_{m\to\infty}\eta_{m}^{\sigma}(t),\qquad\eta(t):=\liminf\nolimits_{\sigma\to\infty}\eta^{\sigma}(t),$$
we observe that $\eta^{\sigma}(t)\geq -D$ and $\eta(t)\geq -D$  for a.e. $t\in[t_0,T_0]$ and all $\sigma\in\N_0$.

Now we show that for all $\tau_0\in[t_0,T_0]$ the following inequality is true:
\begin{equation}\label{nc11}
\int_{t_0}^{\tau_0}\eta(t)\:dt\leq \tilde{u}(t_0)-\tilde{u}(\tau_0).
\end{equation}
Indeed, fix $\tau_0\in(t_0,T_0]$, $\varepsilon>0$. We can choose $\sigma_0\in\N_0$ such that 
$\tilde{u}(\tau_0)-\varepsilon\leq u_{\sigma+k}(\tau_0)$ and $(\tau_0-t_0)/(\sigma+k)\leq\varepsilon$ for all $\sigma>\sigma_0$, $k\in\N$. In view of \eqref{npo1} we get $\dot{u}_n(t)=-\eta_n(t)-n_n^u(t)$, $|n_n^u(t)|\leq \frac{1}{n}$ for a.e. $t\in[t_0,T_0]$ and all $n\in\N_0$.
 Then for all $m\in\N$ and  $\sigma>\sigma_0$ we have
\begin{eqnarray*}
\int_{t_0}^{\tau_0}\eta_m^\sigma(t)\,dt & = & \sum_{k=1}^{N_m^\sigma}\lambda_{k,N_m^\sigma}^\sigma\int_{t_0}^{\tau_0}\eta_{\sigma+k}(t)\,dt\;=\;\sum_{k=1}^{N_m^\sigma}\lambda_{k,N_m^\sigma}^\sigma\left(\!\!-\!\int_{t_0}^{\tau_0}\dot{u}_{\sigma+k}(t)\,dt-\!\int_{t_0}^{\tau_0}n_{\sigma+k}^u(t)\,dt\right)\\
& \leq & \sum_{k=1}^{N_m^\sigma}\lambda_{k,N_m^\sigma}^\sigma\left(u_{\sigma+k}(t_0)\!-\!u_{\sigma+k}(\tau_0)\!+\!\frac{\tau_0-t_0}{\sigma+k}\right)\,\leq\,\sum_{k=1}^{N_m^\sigma}\lambda_{k,N_m^\sigma}^\sigma\Big(\tilde{u}(t_0)\!-\!\tilde{u}(\tau_0)\!+\!2\varepsilon\Big)\\
& = & \tilde{u}(t_0)-\tilde{u}(\tau_0)+2\varepsilon.
\end{eqnarray*}
By the Fatou Lemma we obtain
\begin{equation*}
\int_{t_0}^{\tau_0}\eta(t)\,dt \;\leq\; \liminf_{\sigma\to\infty} \int_{t_0}^{\tau_0}\eta^\sigma(t)\,dt\;\leq\; \liminf_{\sigma\to\infty} \liminf_{m\to\infty}\int_{t_0}^{\tau_0}\eta_m^\sigma(t)\,dt
\;\leq\;\tilde{u}(t_0)-\tilde{u}(\tau_0)+2\varepsilon.
\end{equation*}
Since   $\varepsilon>0$ is arbitrary, we conclude that the inequality \eqref{nc11} is true.

Now we show that for a.e. $t\in[t_0,T_0]$ we have 
\begin{equation}\label{ink11}
(\dot{x}(t),-\eta(t))\in Q_{\bullet}(t,x(t),\tilde{u}(t)).
\end{equation}
Indeed, fix $t\in(t_0,T_0)$ and $\varepsilon>0$. In view of \eqref{n5} and (ii)  we have 
$|w_{\!j_n}^n-y_n(t)|\leq\frac{1}{n}$ and $|\bar{w}_{\!\!j_n}^n-w_{\!j_n}^n|\leq\frac{1}{n}$. Therefore $|\bar{w}_{\!\!j_n}^n-y_n(t)|\leq \frac{2}{n}$ for all $n\in\N_0$. For all large $n\in\N_0$ we obtain 
$|y_n(t)-(t,x(t),\tilde{u}(t))|\leq\varepsilon$ and $|\bar{w}_{\!\!j_n}^n-y_n(t)|\leq\varepsilon$. Therefore $|\bar{w}_{\!\!j_n}^n-(t,x(t),\tilde{u}(t))|\leq\varepsilon$ for all large $n\in\N_0$. In view of (ii) we get $(v_n(t),-\eta_n(t))=(v_{\!\!j_n}^n,-\eta_{\!\!j_n}^n)\in Q_{\bullet}(\bar{w}_{\!\!j_n}^n)$. Hence, it follows that 
for all large $n\in\N_0$ we have
$$(v_{n}(t),-\eta_{n}(t))\in Q_{\bullet}(t,x(t),\tilde{u}(t);\varepsilon).$$
In view of \eqref{npo} we get $\dot{x}_n(t)\in\left(v_n(t)+\frac{1}{n}\B\right)$, so for all large $n\in\N_0$ we get
$$(\dot{x}_{n}(t),-\eta_{n}(t))\in Q_{\bullet}(t,x(t),\tilde{u}(t);\varepsilon)+{\textstyle\frac{1}{n}}\big(\B\times[-1,1]\big).$$
Hence for all large $\sigma\in\N_0$  and all $m\in\N$ we have
$$(z_m^\sigma(t),-\eta_m^\sigma(t))\in \mathrm{conv}\,Q_{\bullet}(t,x(t),\tilde{u}(t);\varepsilon)+{\textstyle\frac{1}{\sigma}}\big(\B\times[-1,1]\big).$$
Passing to a subsequence as $m\to\infty$ we have
$$(\dot{x}(t),-\eta^\sigma(t))\in \mathrm{cl}\,\mathrm{conv}\,Q_{\bullet}(t,x(t),\tilde{u}(t);\varepsilon)+{\textstyle\frac{1}{\sigma}}\big(\B\times[-1,1]\big),$$
passing to a subsequence as $\sigma\rightarrow\infty$ we have
$$(\dot{x}(t),-\eta(t))\in \mathrm{cl}\,\mathrm{conv}\,Q_{\bullet}(t,x(t),\tilde{u}(t);\varepsilon).$$
Since   $\varepsilon>0$ is arbitrary, we obtain
$$(\dot{x}(t),-\eta(t))\in \bigcap_{\varepsilon>0}\mathrm{cl}\,\mathrm{conv}\, 
Q_{\bullet}(t,x(t),\tilde{u}(t);\varepsilon).$$
The latter together with Lemma \ref{llcq} implies \eqref{ink11}.

In view of \eqref{neea}, for every $t\in[t_0,T_0]$ and $n\in\N_0$, we have
\begin{equation*}
\dist\big((s_n(t),x_n(t),u_n(t)),\E U\big)\leq\frac{1}{n}.
\end{equation*}
Passing to the limit as $n\to\infty$, we obtain $\dist\big((t,x(t),\tilde{u}(t)),\E U\big)=0$ for every $t\in[t_0,T_0]$. Therefore $\tilde{u}(t)\geq U(t,x(t))$ for all $t\in[t_0,T_0]$. Define $u(\tau):=\tilde{u}(t_0)-\int_{t_0}^\tau\eta(t)\,dt$. Then $u(\cdot)$ is an absolutely continuous function on $[t_0,T_0]$. Moreover, by \eqref{nc11},  we have $u(t)\geq \tilde{u}(t)$ for all $t\in[t_0,T_0]$. Hence it follows that $u(t)\geq \tilde{u}(t)\geq U(t,x(t))$ for all $t\in[t_0,T_0]$. Furthermore, by \eqref{ink11}, we get $(\dot{x}(t),\dot{u}(t))\in Q_{\bullet}(t,x(t),\tilde{u}(t))$ for a.e. $t\in[t_0,T_0]$. Since $(t,x(t))\in[0,T]\times\B_R$ for all $t\in[t_0,T_0]$, we get
$(\dot{x}(t),\dot{u}(t))\in Q_{\bullet}(t,x(t),\tilde{u}(t))=Q(t,x(t))$
for a.e. $t\in[t_0,T_0]$.
\end{proof}

\begin{Rem}
In the above proof we cannot replace condition  (L4) by (L5), because (L4) implies boundedness 
of sequence  $\{v_j\}_{j\in J},$ but (L5) does not. Condition (L5) plays an important role in the next proof.
\end{Rem}

\begin{proof}[Proof Theorem \ref{Twi5}]
Fix  $(t_0,x_0)\in \D U\cap [0,T)\times\R^{\scriptscriptstyle N}$.  We consider the family $\mathcal{F}$ of all pairs $[\,y(\cdot),[t_0,\tau)\,]$ such that an absolutely continuous function $y(\cdot)=(x,u)(\cdot)$ defined on the nondegenerate interval $[t_0,\tau)\subset[t_0,T]$ is a solution to
\begin{equation}\label{Twow-pc1}
\left\{\begin{array}{ll}
(\dot{x},\dot{u})(t)\in Q(t,x(t))\;\;\tn{for a.e.}\;t\in[t_0,\tau),\\[1mm]
(x,u)(t)\in\E U(t,\cdot)\;\,\tn{for all}\;t\in[t_0,\tau),\\[1mm]
(x,u)(t_0)=(x_0,U(t_0,x_0)).
\end{array}\right.
\end{equation}
In view of Theorem \ref{Twi5-l} the family $\mathcal{F}$ is nonempty. In the family $\mathcal{F}$ we define a partial order relation $[\,y(\cdot),[t_0,\tau)\,]\preccurlyeq[\,\bar{y}(\cdot),[t_0,\bar{\tau})\,]$ if and only if  $[t_0,\tau)\subset[t_0,\bar{\tau})$ and  $y(t)=\bar{y}(t)$ for all $t\in[t_0,\tau)$. We observe that in the family $\mathcal{F}$ every chain $\left\{\,[\,y_{\lambda}(\cdot),[t_0,\tau_{\lambda})\,]\,\right\}_{\lambda\in\Lambda}$ has an upper bound $[\,\bar{y}(\cdot),[t_0,\bar{\tau})\,]$, where $\bar{y}(t):=y_\lambda(t)$ for all $t\in[t_0,\tau_\lambda)$, $\lambda\in\Lambda$ and $[t_0,\bar{\tau}):=\bigcup_{\lambda\in\Lambda}[t_0,\tau_{\lambda})$. By the Kuratowski-Zorn Lemma in  $\mathcal{F}$ the exists a maximal element  $[\,y_{\bullet}(\cdot),[t_0,\tau_{\bullet})\,]$. 

We show that $\lim_{t\to\tau-}y_{\bullet}(t)$ exists and $y_{\bullet}(\cdot)$ is absolutely continuous on $[t_0,\tau_{\bullet}]$, where $y_{\bullet}(\tau_{\bullet}):=\lim_{t\to\tau_{\bullet}^-}y_{\bullet}(t)$, and $y_{\bullet}(\tau_{\bullet})\in\E U(\tau_{\bullet},\cdot)$. 
Indeed, let $y_{\bullet}(\cdot)=(x_{\bullet}(\cdot),u_{\bullet}(\cdot))$ and $t_n=\tau_{\bullet}-{\scriptstyle\frac{1}{2n}}(\tau_{\bullet}-t_0)$, $n\in\N$. Since $y_{\bullet}(\cdot)$ is a solution of \eqref{Twow-pc1}, we get $\dot{x}_{\bullet}(t)\in\D L(t,x_{\bullet}(t),\cdot)$ for a.e. $t\in[t_0,\tau_{\bullet})$. Hence, using (L5), we have $|\dot{x}_{\bullet}(t)|\leq c(t)(1+|x_{\bullet}(t)|)$ for a.e. $t\in[t_0,\tau_{\bullet})$. Since $x_{\bullet}(\cdot)$ is absolutely continuous on $[t_0,\tau_{\bullet})$, we have
\begin{equation*}
|x_{\bullet}(t)|\leq |x_0|+\int_{t_0}^{t_n}\!\!c(s)\,d\!s+\int_{t_0}^{t_n}\!\!c(s)\,|x_{\bullet}(s)|\,d\!s\;\;\; \tn{for all}\;\;\;t\in[t_0,t_n],\; n\in\N.
\end{equation*} 
By Gronwall’s Lemma
\begin{equation*}
|x_{\bullet}(t)|\leq\left(|x_0|+\int_{0}^T\!\!c(s)\,d\!s\right)\,\exp\left(\int_{0}^T\!\!c(s)\,d\!s\right)=:R\;\;\; \tn{for all}\;\;\;t\in[t_0,\tau_{\bullet}).
\end{equation*} 
The latter inequality, together with (L4), implies that $|\dot{x}_{\bullet}(t)|\leq C_R$ for a.e. $t\in[t_0,\tau_{\bullet})$ and some constant  $C_R$. Hence it follows that $\dot{x}_{\bullet}(\cdot)$ is integrable on $[t_0,\tau_{\bullet}]$. Thus the limit\linebreak $\lim_{t\to\tau-}x_{\bullet}(t)$ exists and the function $x_{\bullet}(\cdot)$ is absolutely continuous on  $[t_0,\tau_{\bullet}]$, where $x_{\bullet}(\tau_{\bullet}):=\lim_{t\to\tau_{\bullet}^-}x_{\bullet}(t)$. Since functions $L$ and $U$ are proper and lower semicontinuous, there exists a constant   $D>0$ such that  $-D\leq L(t,x,v)$ and $-D\leq U(t,x)$ for all $t\in[0,T]$, $x\in\B_R$, $v\in\B_{C_R}$. Since $y_{\bullet}(\cdot)$ is a solution of \eqref{Twow-pc1}, we get $-D\leq L(t,x_{\bullet}(t),\dot{x}_{\bullet}(t))\leq -\dot{u}_{\bullet}(t)$, so $\dot{u}_{\bullet}(t)\leq D$ for a.e. $t\in[t_0,\tau_{\bullet})$. Moreover,  $-D\leq U(t_n,x_{\bullet}(t_n))\leq u_{\bullet}(t_n)$ for every $n\in\N$. Since $u_{\bullet}(\cdot)$ is absolutely continuous on $[t_0,\tau_{\bullet})$,  we obtain, for all $n\in\N$,
\begin{eqnarray*}
\int_{t_0}^{t_\bullet}\chi_{[t_0,t_n]}(t)\,|\dot{u}_{\bullet}(t)|\,dt &=& \int_{t_0}^{t_n}2\max\{\dot{u}_{\bullet}(t),0\}\,dt-\int_{t_0}^{t_n}\dot{u}_{\bullet}(t)\,dt\\
&\leq & 2TD+u_{\bullet}(t_0)-u_{\bullet}(t_n)\;\;\leq\;\; 2TD+|U(t_0,x_0)|+D.
\end{eqnarray*}
Due to the above inequality and Lebesgue’s Monotone Convergence Theorem we obtain
that $\dot{u}_{\bullet}(\cdot)$ is integrable on $[t_0,\tau_{\bullet}]$. Hence it follows that the limit  $\lim_{t\to\tau-}u_{\bullet}(t)$ exists and $u_{\bullet}(\cdot)$ is absolutely continuous on $[t_0,\tau_\bullet]$, where $u_\bullet(\tau_\bullet):=\lim_{t\to\tau_\bullet^-}u_\bullet(t)$. In view of \eqref{Twow-pc1} we have $u_{\bullet}(t_n)\geq U(t_n,x_{\bullet}(t_n))$ for all $n\in\N$. The latter inequality, together with lower semicontinuity of $U$, implies $u_{\bullet}(\tau_{\bullet})\geq U(\tau_{\bullet},x_{\bullet}(\tau_{\bullet}))$. So $y_{\bullet}(\tau_{\bullet})\in\E U(\tau_{\bullet},\cdot)$.

The proof is completed by showing that $\tau_{\bullet}=T$. Suppose, contrary to our claim, that $\tau_{\bullet}<T$. Then, in view of Theorem \ref{Twi5-l}, there exist  $\bar{\tau}_{\bullet}\in(\tau_{\bullet},T)$ and  $\bar{y}(\cdot)=(\bar{x},\bar{u})(\cdot)$ such that
\begin{equation*}
\left\{\begin{array}{ll}
\dot{\bar{y}}(t)\in Q(t,\bar{x}(t))\;\;\tn{p.w.}\;t\in[\tau_{\bullet},\bar{\tau}_{\bullet}),\\[1mm]
\bar{y}(t)\in \E U(t,\cdot),\;\;\forall\;t\in[\tau_{\bullet},\bar{\tau}_{\bullet}),\\[1mm]
\bar{y}(\tau_{\bullet})=y_{\bullet}(\tau_{\bullet}).
\end{array}\right.
\end{equation*}
We define the function $\bar{y}_{\bullet}(\cdot)$ by the formula $\bar{y}_{\bullet}(t):=y_{\bullet}(t)$ for all  $t\in[t_0,\tau_{\bullet})$ and $\bar{y}_{\bullet}(t):=\bar{y}(t)$ for all $t\in[\tau_{\bullet},\bar{\tau}_{\bullet})$. We observe that $[\,\bar{y}_{\bullet}(\cdot),[t_0,\bar{\tau}_{\bullet})\,]\in \mathcal{F}$  and   $[\,y_{\bullet}(\cdot),[t_0,\tau_{\bullet})\,]\prec[\,\bar{y}_{\bullet}(\cdot),[t_0,\bar{\tau}_{\bullet})\,]$, which contradicts the definition of the maximal element $[\,y_{\bullet}(\cdot),[t_0,\tau_{\bullet})\,]$.
\end{proof}

\section{Hamilton-Jacobi-Bellman Theory}\label{section-5}

\noindent We define the functional $\Gamma[\,\cdot\,]$ for any $x(\cdot)\in\mathcal{A}\left([t_0,T],\R^{\scriptscriptstyle N}\right)$  by the formula
\begin{equation*}
\Gamma[x(\cdot)]=g(x(T))+\int_{t_0}^TL(t,x(t),\dot{x}(t))\,dt.
\end{equation*}
Then the value function for any $(t_0,x_0)\in[0,T]\times\R^{\scriptscriptstyle N}$ is given by
\begin{equation*}
V(t_0,x_0)= \left\{
\begin{array}{lcl}
\inf\left\{\,\Gamma[x(\cdot)]\,\mid\, x(\cdot)\in\mathcal{A}\left([t_0,T],\R^{\scriptscriptstyle N}\right)\!,\, x(t_0)=x_0\right\} & \tn{if} & t_0\in[0,T), \\[1mm]
g(x_0) & \tn{if} & t_0=T.
\end{array}
\right.
\end{equation*}

\begin{Th}\label{vfgr}
Assume that $L$ satisfies \tn{(L1)-(L5)} and $g$ is a proper, lower semicontinuous function. Let $V$ be the value function associated with $L$ and $g$. Then we have the following.
\begin{enumerate}
\item[\tn{\bf{(a)}}] $V$ is a proper, lower semicontinuous function.\vspace{1mm}
\item[\tn{\bf{(b)}}] For every $(t_0,x_0)\in\D V\cap [0,T)\times\R^{\scriptscriptstyle N}$ there exists $\bar{x}(\cdot)\in\mathcal{A}([t_0,T],\R^{\scriptscriptstyle N})$ such that $V(t_0,x_0)=\Gamma[\bar{x}(\cdot)]$ and $\bar{x}(t_0)=x_0$. 
\end{enumerate}
\end{Th}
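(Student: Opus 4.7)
The plan is to adapt the compactness-plus-Mazur machinery already deployed in Step 3 of the proof of Theorem \ref{Twi5-l} to the classical direct method, proving (a) and (b) in one sweep. The crucial bridge is the set-valued reformulation: $(t_0,x_0,u_0)\in\E V$ with $t_0<T$ if and only if there exists an absolutely continuous pair $(x,u):[t_0,T]\to\R^{\scriptscriptstyle N}\times\R$ with $(\dot{x},\dot{u})(t)\in Q(t,x(t))$ a.e., $x(t_0)=x_0$, $u(t_0)=u_0$, and $u(T)\geq g(x(T))$. Closure of this set yields lower semicontinuity of $V$ on $[0,T)\times\R^{\scriptscriptstyle N}$, while applying the same closure argument to a minimizing sequence produces the desired optimal arc and at the same time pins down lower semicontinuity at the endpoint $t_0=T$.

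First I would dispose of properness. That $V\not\equiv+\infty$ is immediate: pick $\bar{x}_0\in\D g$, so $V(T,\bar{x}_0)=g(\bar{x}_0)<+\infty$. To show $V>-\infty$ everywhere, I would fix $(t_0,x_0)$ and restrict attention to trajectories $x(\cdot)$ with $\Gamma[x(\cdot)]<+\infty$ (else the infimum is $+\infty$ and there is nothing to prove). Such trajectories satisfy $\dot{x}(t)\in\D L(t,x(t),\cdot)$ for a.e. $t$, so by $(\mathcal{Q}5)$ and Gronwall's lemma $|x(t)|\leq R$ on $[t_0,T]$ with $R$ depending only on $|x_0|$ and $\|c(\cdot)\|_{L^{\scriptscriptstyle 1}}$; $(\mathcal{Q}4)$ then forces $|\dot{x}(t)|\leq C_R$ a.e. Since $L$ is lower semicontinuous and proper, it is bounded below on the compact set $[0,T]\times\B_R\times\B_{C_R}$ by some $-D$, and lower semicontinuity and properness of $g$ give a lower bound $-D'$ on $\B_R$. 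This yields $\Gamma[x(\cdot)]\geq -D'-DT$ uniformly in $x(\cdot)$, proving $V(t_0,x_0)>-\infty$.

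For the closure step I would take a sequence $(t_n,x_n,u_n)\in\E V\cap [0,T)\times\R^{\scriptscriptstyle N}\times\R$ converging to $(t_0,x_0,u_0)$ with $t_0<T$ (or, for (b), a minimizing sequence of arcs $x_n(\cdot)$ with $x_n(t_0)=x_0$ and $\Gamma[x_n(\cdot)]\to V(t_0,x_0)$) and, for each $n$, select arcs $(y_n,w_n)$ on $[t_n,T]$ with $(\dot{y}_n,\dot{w}_n)(t)\in Q(t,y_n(t))$ a.e., $y_n(t_n)=x_n$, $w_n(t_n)\leq V(t_n,x_n)+1/n\leq u_n+1/n$, and $w_n(T)\geq g(y_n(T))$; in the minimizing-sequence case the $(y_n,w_n)$ are dictated by $x_n$. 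After standard reduction to a common interval (restricting to $[t_0,T]$ when $t_n\leq t_0$, or extending by constants on $[t_0,t_n]$ and noting that these shrinking pieces contribute nothing in the limit), $(\mathcal{Q}4)$-$(\mathcal{Q}5)$ plus Gronwall yield uniform $L^{\infty}$-bounds on $\{y_n\}$ and $\{\dot{y}_n\}$; Arzel\`{a}-Ascoli and Dunford-Pettis produce $y_n\to y$ uniformly and $\dot{y}_n\rightharpoonup\dot{y}$ in $L^{\scriptscriptstyle 1}$, with $y(t_0)=x_0$. Because $\dot{w}_n\geq L(t,y_n,\dot{y}_n)\geq -D$, the total variations of $w_n$ are uniformly bounded exactly as in \eqref{var-u-1}, so Helly's theorem delivers a pointwise limit $\tilde{w}$ of bounded variation. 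Mazur-combining $\dot{y}_n$ to converge a.e. to $\dot{y}$ and invoking Lemma \ref{llcq} in the manner of the display ending Step 3 of the proof of Theorem \ref{Twi5-l}, one obtains $(\dot{y}(t),-\eta(t))\in Q(t,y(t))$ a.e.\ together with $\int_{t_0}^{T}\eta(t)\,dt\leq\tilde{w}(t_0)-\tilde{w}(T)$; the lower semicontinuity of $g$ gives $g(y(T))\leq\tilde{w}(T)$. Combining these, $\Gamma[y(\cdot)]\leq\tilde{w}(t_0)\leq u_0$ (respectively $\Gamma[y(\cdot)]\leq\liminf_n\Gamma[x_n(\cdot)]=V(t_0,x_0)$), which proves $(t_0,x_0,u_0)\in\E V$ for (a) and, by admissibility of $y(\cdot)$, forces the equality $V(t_0,x_0)=\Gamma[y(\cdot)]$ for (b). Lower semicontinuity of $V$ at points $(T,x_0)$ follows from this together with the fact that $V(T,\cdot)=g(\cdot)$ is itself lower semicontinuous and $V(t,x)\geq g(x(T))+\int L\geq -D'-DT$ along any arc reaching near $(T,x_0)$.

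The principal obstacle is precisely the $w$-component: it is absolutely continuous along each $(y_n,w_n)$, yet the pointwise limit $\tilde{w}$ is only of bounded variation, so $\int_{t_0}^{T} L(t,y,\dot{y})\,dt$ cannot be compared with $\liminf \int_{t_n}^{T} L(t,y_n,\dot{y}_n)\,dt$ by any naive semicontinuity argument applied directly to $\Gamma$. This is exactly the phenomenon already handled in the proof of Theorem \ref{Twi5-l} by passing to $Q$ and invoking Helly together with Fatou's lemma on Mazur convex combinations, which is the precise reason for routing the whole argument through the inclusion $(\dot{x},\dot{u})\in Q(t,x)$ rather than through $\Gamma$ directly. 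The remaining technicalities--varying initial times $t_n$, and the identification $V(t_0,x_0)=\tilde{w}(t_0)$ rather than $w(t_0)$ of some absolutely continuous $w$--are handled by the constant-extension trick and by defining $u(\tau):=\tilde{w}(t_0)-\int_{t_0}^{\tau}\eta(t)\,dt$ as in the closing lines of the proof of Theorem \ref{Twi5-l}.
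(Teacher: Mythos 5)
Your argument is correct in substance, but it is a genuinely different route from the paper's. The paper does not prove Theorem \ref{vfgr} from scratch: it derives (a) and (b) directly from \cite[Thm. 7.6]{AM} and \cite[Thm. 6.3]{AM1}, and the only work it does is to observe that the hypothesis (HLC) used there can be weakened to a local integrable lower bound $L(t,x,v)\geq -k_R(t)$, which follows from (L1), (L2) and (L4) exactly as in your properness step. You instead give a self-contained proof by the direct method, routing everything through the inclusion $(\dot{x},\dot{u})(t)\in Q(t,x(t))$ and recycling the lower-closure machinery (Gronwall plus $(\mathcal{Q}4)$--$(\mathcal{Q}5)$ for uniform bounds, Arzel\`a--Ascoli, Dunford--Pettis, Helly, Mazur combinations, Fatou, and Cesari's Lemma \ref{llcq}) that the paper itself deploys in Step 3 of the proof of Theorem \ref{Twi5-l}. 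What your approach buys is independence from the external representation theorems; what the paper's approach buys is brevity and a single point of responsibility for the lower-closure analysis. Your identification of the real difficulty --- that the $u$-component survives only as a BV pointwise limit $\tilde{w}$, so one must compare $\int L(t,y,\dot y)$ with $\tilde{w}(t_0)-\tilde{w}(T)$ through $Q$ rather than through $\Gamma$ --- is exactly right, and the repair via $\eta:=\liminf_\sigma\liminf_m\eta^\sigma_m$ and $u(\tau):=\tilde{w}(t_0)-\int_{t_0}^\tau\eta$ is the correct one. The only thin spot is lower semicontinuity at points $(T,x_0)$: the sentence you devote to it should be expanded to the actual estimate, namely that for $(t_n,x_n)\to(T,x_0)$ any near-optimal arc $y_n$ satisfies $|y_n(T)-x_n|\leq\int_{t_n}^T c(s)(1+R)\,ds\to 0$ and $\int_{t_n}^T L(s,y_n,\dot y_n)\,ds\geq -\int_{t_n}^T k_R(s)\,ds\to 0$, so that $\liminf_n V(t_n,x_n)\geq\liminf_n g(y_n(T))\geq g(x_0)$ by lower semicontinuity of $g$; as written, the appeal to the uniform lower bound $-D'-DT$ does not by itself give the endpoint inequality.
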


The theorem is a consequence of the results from \cite[Thm. 7.6]{AM} and \cite[Thm. 6.3]{AM1}. It is easy to note that condition (HLC) in \cite[Thm. 7.6]{AM} and \cite[Thm. 6.3]{AM1} can be weakened to the condition: for every $R>0$ there exists an integrable function $k_R:[0,T]\to[0,\infty)$ such that $L(t,x,v)\geq -k_R(t)$ for all $x\in\B_R$, $v\in\R^{\scriptscriptstyle N}$ and a.e. $t\in[0,T]$. By (L4) there exists a constant $C_R\geq 0$ such that  $\|\D L(t,x,\cdot)\|\leq C_R$ for all $(t,x)\in[0,T]\times\B_R$. Since $L$ is proper and lower semicontinuous (due to (L1)-(L2)), there exists a constant $k_R>0$ such that $L(t,x,v)\geq -k_R$ for all  $t\in[0,T]$, $x\in\B_R$, $v\in\B_{C_R}$. Thus $L(t,x,v)\geq -k_R$ for all  $t\in[0,T]$, $x\in\B_R$, $v\in\R^{\scriptscriptstyle N}$. It means that the condition (L6) is not necessary in the above theorem.

\vspace{2mm}
In the Subsection \ref{els} we show that the value function $V$, associated with $H^{\ast}$ and $g$, is  a lower semicontinuous solution of \eqref{rowhj}, provided that $H$ satisfies (H1)-(H4) and $g$ is proper and lower semicontinuous. We will see that the condition (H5) is not required to prove the above fact. In the Subsection \ref{uls}, assuming additionally that $H$ satisfies (H5), we show that the value function $V$ is a unique lower semicontinuous solution of \eqref{rowhj}. It turns out that the condition (H5) is  necessary in some sense to prove the uniqueness  result. More precisely, one can propose two different lower semicontinuous solutions of \eqref{rowhj} with the Hamiltonian $H$ satisfying (H1)-(H4); see \cite{AM0}.

\subsection{Existence of Lower Semicontinuous Solutions}\label{els}
\begin{Th}\label{lfm-Prop}
Assume that $L$ satisfies \tn{(L1)-(L5)} and $g$ is proper, lower semicontinuous.\linebreak If $V$ is the value function associated with $L$ and $g$, then for all $(t_0,x_0)\in\D V\cap [0,T)\times\R^{\scriptscriptstyle N}$ there exists $v_0\in\R^{\scriptscriptstyle N}$ such that 
\begin{equation}\label{FW-IS111}
d V(t_0,x_0)(1,v_0)\leq -L(t_0,x_0,v_0).
\end{equation}
\end{Th}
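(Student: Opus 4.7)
The plan is to combine the optimal trajectory furnished by Theorem~\ref{vfgr}(b) with the dynamic programming principle (DPP), extract a limit velocity $v_0$ from the mean velocities of that trajectory, and identify the pair $(v_0,-L(t_0,x_0,v_0))$ via the set-valued map $Q$ together with the closure formula of Lemma~\ref{llcq}.

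First I would invoke Theorem~\ref{vfgr}(b) to obtain $\bar{x}(\cdot)\in\mathcal{A}([t_0,T],\R^{\scriptscriptstyle N})$ with $\bar{x}(t_0)=x_0$ and $V(t_0,x_0)=\Gamma[\bar{x}(\cdot)]$. Since $\dot{\bar{x}}(s)\in\D L(s,\bar{x}(s),\cdot)$ for a.e. $s$, condition (L5) together with Gronwall's inequality (as in Step~1 of the proof of Theorem~\ref{Twi5}) yields $\bar{x}([t_0,T])\subset\B_R$ for some $R>0$, and then (L4) upgrades this to $|\dot{\bar{x}}(s)|\leq C_R$ a.e. A standard restriction argument shows that $\bar{x}\!\restriction_{[t_0+\tau,T]}$ is optimal from $\bar{x}(t_0+\tau)$, so the DPP identity
\begin{equation*}
V(t_0+\tau,\bar{x}(t_0+\tau))-V(t_0,x_0)=-\int_{t_0}^{t_0+\tau}L(s,\bar{x}(s),\dot{\bar{x}}(s))\,ds
\end{equation*}
holds for all $\tau\in(0,T-t_0]$. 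Setting $y_\tau:=\tau^{-1}(\bar{x}(t_0+\tau)-x_0)$ and $M(\tau):=\tau^{-1}\int_{t_0}^{t_0+\tau}L(s,\bar{x}(s),\dot{\bar{x}}(s))\,ds$, the bound $|\dot{\bar{x}}|\leq C_R$ forces $|y_\tau|\leq C_R$, so I may pick $\tau_n\to 0^+$ realising $L_\ast:=\liminf_{\tau\to 0^+}M(\tau)$ and, passing to a subsequence, extract $y_{\tau_n}\to v_0$ for some $v_0\in\R^{\scriptscriptstyle N}$.

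The crucial step is to prove $L(t_0,x_0,v_0)\leq L_\ast$. By the definition of $Q$, the $L^1$-integrand $s\mapsto(\dot{\bar{x}}(s),-L(s,\bar{x}(s),\dot{\bar{x}}(s)))$ is an a.e.\ selection of $Q(s,\bar{x}(s))$; fixing $\varepsilon>0$ and using continuity of $\bar{x}$ at $t_0$, this integrand takes values in $Q(t_0,x_0;\varepsilon)$ for a.e.\ $s\in[t_0,t_0+\tau]$ once $\tau$ is small enough. A Hahn--Banach/Jensen argument then places the mean
\begin{equation*}
(y_{\tau_n},-M(\tau_n))=\frac{1}{\tau_n}\int_{t_0}^{t_0+\tau_n}\!\big(\dot{\bar{x}}(s),-L(s,\bar{x}(s),\dot{\bar{x}}(s))\big)\,ds
\end{equation*}
inside $\mathrm{cl}\,\mathrm{conv}\,Q(t_0,x_0;\varepsilon)$ for all large $n$. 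Letting $n\to\infty$ and then $\varepsilon\to 0^+$, Lemma~\ref{llcq} delivers $(v_0,-L_\ast)\in Q(t_0,x_0)$, which is precisely $L(t_0,x_0,v_0)\leq L_\ast$.

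To conclude, I would feed the admissible sequence $\tau_n\to 0^+$, $(1,y_{\tau_n})\to(1,v_0)$ into the subderivative definition. Since $x_0+\tau_n y_{\tau_n}=\bar{x}(t_0+\tau_n)$ and the corresponding difference quotient equals $-M(\tau_n)\to-L_\ast$ along the chosen subsequence,
\begin{equation*}
dV(t_0,x_0)(1,v_0)\;\leq\;\liminf_{n\to\infty}\frac{V(t_0+\tau_n,\bar{x}(t_0+\tau_n))-V(t_0,x_0)}{\tau_n}\;=\;-L_\ast\;\leq\;-L(t_0,x_0,v_0),
\end{equation*}
which is exactly \eqref{FW-IS111}. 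The delicate point I expect to require the most care is the mean-value step: the second coordinate of the integrand need not be bounded (only $L^1$-integrable, via $\Gamma[\bar{x}]<\infty$ and the pointwise lower bound on $L$ coming from lower semicontinuity on $[0,T]\times\B_R\times\B_{C_R}$), so the inclusion in the closed convex hull of the \emph{non-convex} set $Q(t_0,x_0;\varepsilon)$ must be justified by a linear-functional argument rather than a direct Riemann-sum approximation, after which Lemma~\ref{llcq} does the heavy lifting.
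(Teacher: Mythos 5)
Your proposal follows essentially the same route as the paper's proof: take the optimal trajectory from Theorem~\ref{vfgr}(b), use (L4) to get a Lipschitz bound $|\dot{\bar{x}}|\leq C_R$, extract a limit $v_0$ of the mean velocities, place the averaged pair $\bigl(y_{\tau_n},-M(\tau_n)\bigr)$ in $\mathrm{cl}\,\mathrm{conv}\,Q(t_0,x_0;\varepsilon)$, and close with Lemma~\ref{llcq}; the paper even uses the identical mean-value inclusion without the Hahn--Banach commentary, and it gets by with only the one-sided inequality $V(t,\bar{x}(t))\leq\bar{u}(t)$ rather than your full DPP equality (both are fine). The one point you should patch is the degenerate case $L_\ast=+\infty$: since $s\mapsto L(s,\bar{x}(s),\dot{\bar{x}}(s))$ is only integrable, the averages $M(\tau)$ can blow up as $\tau\to 0^+$ when $t_0$ is not a Lebesgue point, and then $(v_0,-L_\ast)$ is not a point of $\R^{\scriptscriptstyle N+1}$, so the Lemma~\ref{llcq} identification does not apply. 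In that case the conclusion is trivial because the difference quotients tend to $-\infty$, hence $dV(t_0,x_0)(1,v_0)=-\infty\leq -L(t_0,x_0,v_0)$ by properness of $L$ in $v$; the paper handles exactly this by splitting into the cases $dV(t_0,x_0)(1,v_0)=-\infty$ and $>-\infty$, and you should add the analogous one-line dichotomy.
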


\begin{proof}
Fix $(t_0,x_0)\in\D V\cap [0,T)\times\R^{\scriptscriptstyle N}$. By Theorem \ref{vfgr} there exists $\bar{x}(\cdot)\in\mathcal{A}([t_0,T],\R^{\scriptscriptstyle N})$ such that $V(t_0,x_0)=\Gamma[\bar{x}(\cdot)]$ and $\bar{x}(t_0)=x_0$. We define the absolutely continuous function $\bar{u}:[t_0,T]\to\R$ by the formula
\begin{equation*}
\bar{u}(t):=g(\bar{x}(T))+\int_t^TL(s,\bar{x}(s),\dot{\bar{x}}(s))\;d\!s.
\end{equation*}
Observe that $\bar{u}(t_0)=V(t_0,x_0)$, $\bar{u}(t)\geq V(t,\bar{x}(t))$ for all $t\in[t_0,T]$ and $\dot{\bar{u}}(t)=-L(t,\bar{x}(t),\dot{\bar{x}}(t))$ for a.e. $t\in[t_0,T]$. Since $V(t_0,x_0)<+\infty$, we have $\dot{\bar{x}}(t)\in\D L(t,\bar{x}(t),\cdot)$ for a.e. $t\in[t_0,T]$. Therefore, by (L4),  we obtain $|\dot{\bar{x}}(t)|\leq C_R$ for a.e. $t\in[t_{0},T]$, where $R:=\max_{t\in[t_0,T]}|\bar{x}(t)|$.  The latter inequality, together with absolute continuity of $\bar{x}(\cdot)$, implies that $\bar{x}(\cdot)$ is Lipschitz continuous. Hence there exist $h_n\rightarrow 0+$ and $v_0\in\R^{\scriptscriptstyle N}$ such that
\begin{equation}\label{lfm-0}
\frac{1}{h_n}\int_{t_0}^{t_0+h_n}\dot{\bar{x}}(t)\;dt=\frac{\bar{x}(t_0+h_n)-\bar{x}(t_0)}{h_n}\rightarrow v_0.
\end{equation}
We choose $v_{n}$ such that  $\bar{x}(t_0+h_n)=x_0+v_nh_n$. Then we have $(1,v_n)\rightarrow(1,v_0)$. Moreover
\begin{eqnarray}\label{lfm-1}
\nonumber d V(t_0,x_0)(1,v_0) &\leq & \liminf_{n\to\infty}\frac{V((t_0,x_0)+h_n(1,v_n))-V(t_0,x_0)}{h_n}\\
\nonumber &= & \liminf_{n\to\infty}\frac{V(t_0+h_n,\bar{x}(t_0+h_n))-V(t_0,x_0)}{h_n}\\
\nonumber &\leq & \liminf_{n\to\infty}\frac{\bar{u}(t_0+h_n)-\bar{u}(t_0)}{h_n}\\
\nonumber &= & \liminf_{n\to\infty}\frac{1}{h_n}\int_{t_0}^{t_0+h_n}\dot{\bar{u}}(t)\;dt\\
 &\leq & \liminf_{n\to\infty}\frac{1}{h_n}\int_{t_0}^{t_0+h_n}-L(t,\bar{x}(t),\dot{\bar{x}}(t)))\;dt.
\end{eqnarray}
We consider two cases:

\bf{Case 1.} If $d V(t_0,x_0)(1,v_0)=-\infty$, then the inequality \eqref{FW-IS111} holds obviously.  

\bf{Case 2.} If $d V(t_0,x_0)(1,v_0)>-\infty$, then denoting by $\eta_0$  the right hand side of  \eqref{lfm-1} we obtain $\eta_0>-\infty$. Since $L$ is proper and lower semicontinuous, there exists a constant $D>0$ such that $L(t,x,v)\geq -D$ for all $t\in[0,T]$, $x\in\B_R$, $v\in\B_{C_R}$. Hence, $-L(t,\bar{x}(t),\dot{\bar{x}}(t))\leq D$ for a.e. $t\in[t_{0},T]$. The latter inequality yields  $\eta_0<+\infty$. Therefore $\eta_0$ is a real number. Fix $\varepsilon>0$ and choose $n_0\in\N$ such that for every $n>n_0$  the following property holds
\begin{equation}\label{lfm-2}
\forall\:t\in[t_0,t_0+h_n],\;\;\;\; |t_0-t|<\varepsilon,
\;\;\;|x_0-\bar{x}(t)|<\varepsilon.
\end{equation}
We observe that for a.e. $t\in[t_0,T]$ we have
$$(\dot{\bar{x}}(t),-L(t,\bar{x}(t),\dot{\bar{x}}(t)))\in Q(t,\bar{x}(t)),$$
so using \eqref{lfm-2} for a.e. $t\in[t_0,t_0+h_n]$  we have
$$(\dot{\bar{x}}(t),-L(t,\bar{x}(t),\dot{\bar{x}}(t)))\in Q(t_0,x_0;\varepsilon).$$
Therefore for a.e. $t\in[t_0,t_0+h_n]$ we have
$$(\dot{\bar{x}}(t),-L(t,\bar{x}(t),\dot{\bar{x}}(t)))\in \mathrm{cl}\,\mathrm{conv}\,Q(t_0,x_0;\varepsilon),$$
hence for every $n>n_0$ we have
$$\frac{1}{h_n}\int_{t_0}^{t_0+h_n}\big(\dot{\bar{x}}(t),-L(t,\bar{x}(t),\dot{\bar{x}}(t))\big)\,dt\in
\mathrm{cl}\,\mathrm{conv}\,Q(t_0,x_0;\varepsilon).$$
Passing to a subsequence as $n\rightarrow\infty$ we have
$$(v_0,\eta_0)\in\mathrm{cl}\,\mathrm{conv}\,Q(t_0,x_0;\varepsilon).$$
Since $\varepsilon$ is arbitrary, we obtain 
$$(v_0,\eta_0)\in \bigcap_{\varepsilon>0}\mathrm{cl}\,\mathrm{conv}\,Q(t_0,x_0;\varepsilon).$$
By Lemma \ref{llcq} we get $(v_0,\eta_0)\in Q(t_0,x_0)$. The latter, together with \eqref{lfm-1}, implies  
$$d V(t_0,x_0)(1,v_0)\leq\eta_0\leq-L(t_0,x_0,v_0).$$
Thus the inequality \eqref{FW-IS111} also holds in this case.
\end{proof}

\begin{Rem}
We cannot strengthen the conclusion of Theorem \ref{lfm-Prop} by requiring that \linebreak $v_0\in\D L(t_0,x_0,\cdot)$. Indeed, let $L$ and $V$ be given by \eqref{expnun1} and \eqref{expnun2} respectively. Suppose that \eqref{FW-IS111} holds for some $v_0\in\D L(t_0,x_0,\cdot)$, then $+\infty=d V(\xi,\xi)(1,0)\leq -L(\xi,\xi,0)=0$ for all $\xi\in(0,T)$, which is impossible.
\end{Rem}

\begin{Lem}\label{cdwt-01}
Assume that $L$ satisfies \tn{(L1)-(L3)}. Let $(t_0,x_0,u_0)\in(0,T]\times\R^{\scriptscriptstyle N}\times\R$ and\linebreak $v_0\in\D L(t_0,x_0,\cdot)$. Then there exist $\tau>0$ and a function $(x,u):[t_0-\tau,t_0]\to\R^{\scriptscriptstyle N}\times\R$\linebreak of class $\mathcal{C}^1$ with $(x,u)(t_0)=(x_0,u_0)$ which satisfies  $(\dot{x}(t),\dot{u}(t))\in Q(t,x(t))$ for all $t\in[t_0-\tau,t_0]$  and  $(\dot{x},\dot{u})(t_0^-)=(v_0,-L(t_0,x_0,v_0))$.
\end{Lem}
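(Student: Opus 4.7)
The plan is to realize the trajectory as a classical $C^1$ solution of an ordinary differential equation driven by a continuous selection of $Q$ that passes through the prescribed point with the prescribed derivative.

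First, I would observe that $(v_0,-L(t_0,x_0,v_0))\in Q(t_0,x_0)$, directly from the definition of $Q$ and from $v_0\in\D L(t_0,x_0,\cdot)$. By Corollary \ref{wrow-wm}, the set-valued map $Q:[0,T]\times\R^{\scriptscriptstyle N}\multimap\R^{\scriptscriptstyle N+1}$ has nonempty, closed, convex values and is lower semicontinuous in Kuratowski's sense. This is exactly the setting in which a continuous selection exists.

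Second, I would invoke Michael's continuous selection theorem in its pinned form: if $F$ is an lsc set-valued map from a paracompact space into a Banach space with nonempty closed convex values and $y_0\in F(x_0)$, then there is a continuous selection $f$ of $F$ with $f(x_0)=y_0$. The standard way to derive this from the ordinary Michael theorem is to replace $F$ by the map $\tilde F$ equal to $F$ off $x_0$ and to $\{y_0\}$ at $x_0$; one checks routinely that $\tilde F$ retains closed convex values and is still lsc (lsc at $x_0$ follows from lsc of $F$ at $x_0$ together with $y_0\in F(x_0)$, and lsc at any other point is unaffected). Applied to $Q$, this yields a continuous function $q=(\phi,\psi):[0,T]\times\R^{\scriptscriptstyle N}\to\R^{\scriptscriptstyle N}\times\R$ with $q(t,x)\in Q(t,x)$ for all $(t,x)$ and $q(t_0,x_0)=(v_0,-L(t_0,x_0,v_0))$.

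Third, I would apply Peano's existence theorem (after reversing time) to the ordinary differential equation $\dot x(t)=\phi(t,x(t))$, $x(t_0)=x_0$, obtaining $\tau>0$ and a $C^1$ solution $x:[t_0-\tau,t_0]\to\R^{\scriptscriptstyle N}$. Then I would simply set
\begin{equation*}
u(t):=u_0-\int_{t}^{t_0}\psi(s,x(s))\,ds,\qquad t\in[t_0-\tau,t_0],
\end{equation*}
which is $C^1$ with $u(t_0)=u_0$ and $\dot u(t)=\psi(t,x(t))$. Hence $(\dot x(t),\dot u(t))=q(t,x(t))\in Q(t,x(t))$ for every $t\in[t_0-\tau,t_0]$, and the continuity of $q$ together with the $C^1$ regularity at $t_0$ gives $(\dot x,\dot u)(t_0^-)=q(t_0,x_0)=(v_0,-L(t_0,x_0,v_0))$.

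The only delicate step is the pinned Michael selection; the verification that the one-point modification $\tilde Q$ of $Q$ is still lower semicontinuous is where the lower semicontinuity hypothesis $(\mathcal{Q}2)$ gets used in an essential way. Everything else is a direct application of Peano's theorem and integration of a continuous function.
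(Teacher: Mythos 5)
Your proof is correct, and its skeleton --- produce a continuous selection of $Q$ passing through $(v_0,-L(t_0,x_0,v_0))$ at the point $(t_0,x_0)$, solve the resulting ODE backwards in time by Peano's theorem, and recover $u$ by integrating the last component --- is exactly the paper's. The one genuine difference is how the pinned continuous selection is obtained. You use Michael's selection theorem together with the standard one-point modification $\tilde Q$ (legitimate here: $[0,T]\times\R^{\scriptscriptstyle N}$ is metric, hence paracompact, $(\mathcal{Q}1)$--$(\mathcal{Q}2)$ from Corollary \ref{wrow-wm} give nonempty closed convex values and lower semicontinuity, and your check that $\tilde Q$ remains lsc is the standard one). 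The paper instead freezes the parameter in the parametrization $\e$ of Theorem \ref{th-oparam}: setting $a_0:=(v_0,-L(t_0,x_0,v_0))\in Q(t_0,x_0)$, the map $(t,x)\mapsto\e(t,x,a_0)$ is a continuous selection of $Q$ by (P1), and the extra-property (P2) pins it to $a_0$ at $(t_0,x_0)$. The two devices are interchangeable for this lemma; the paper's choice reuses machinery it needs anyway (Theorem \ref{th-oparam} is essential in the proof of the Invariance Theorem, where (P2) does work that a bare Michael selection could not), while your route is more self-contained and appeals only to a textbook result. One trivial point worth making explicit: choose $\tau\leq t_0$ so that the backward solution stays inside the domain $[0,T]\times\R^{\scriptscriptstyle N}$ on which the selection is defined; this is possible precisely because $t_0>0$.
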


\begin{proof}
Fix $(t_0,x_0,u_0)\in(0,T]\times\R^{\scriptscriptstyle N}\times\R$ and $v_0\in\D L(t_0,x_0,\cdot)$. From Corollaty \ref{wrow-wm} it follows that $Q$ satisfies $(\mathcal{Q}1)$-$(\mathcal{Q}3)$. In view of Theorem \ref{th-oparam} there exists a continuous function $\e(\cdot,\cdot,\cdot)$ satisfying (P1) and (P2). Let us fix $a_0:=(v_0,-L(t_0,x_0,v_0))\in Q(t_0,x_0)$. We define a continuous function $\bar{\e}(\cdot,\cdot,\cdot)$ for every $(t,x,u)\in[0,T]\times\R^{\scriptscriptstyle N}\times\R$ by 
$$\bar{\e}(t,x,u):=\,\e(t,x,a_0).$$
In view of Peano Theorem there exist $\tau>0$ and the function $(x,u):[t_0-\tau,t_0]\to\R^{\scriptscriptstyle N}\times\R$ of class $\mathcal{C}^1$ which is a solution to
\begin{displaymath}
\left\{
\begin{array}{l}
(\dot{x},\dot{u})(t)= \bar{\e}(t,x(t),u(t))\;\;\tn{for all}\;\;t\in[t_0-\tau,t_0],\\
(x,u)(t_0)=(x_0,u_0).
\end{array}
\right.
\end{displaymath}
By (P1) for all $t\in[t_0-\tau,t_0]$ we have
\begin{eqnarray*}
(\dot{x},\dot{u})(t)&=&\bar{\e}(t,x(t),u(t))\\
&=&\e(t,x(t),a_0)\;\in\; Q(t,x(t)).
\end{eqnarray*}
Since $a_0\in Q(t_0,x_0)$, from (P2) we deduce
\begin{eqnarray*}
(\dot{x},\dot{u})(t_0^-) &=&  \bar{\e}(t_0,x(t_0),u(t_0))\\
&=&\bar{\e}(t_0,x_0,u_0)\;=\;\e(t_0,x_0,a_0)\\
&=& a_0\;=\;(v_0,-L(t_0,x_0,v_0)).
\end{eqnarray*}
This finishes the proof.
\end{proof}

\begin{Th}\label{olr-P1}
Assume that $L$ satisfies \tn{(L1)-(L5)} and $g$ is proper, lower semicontinuous.\linebreak If $V$ is the value function associated with $L$ and $g$, then for all $(t_0,x_0)\in\D V\cap (0,T]\times\R^{\scriptscriptstyle N}$  and all $v_0\in\D L(t_0,x_0,\cdot)$ we have
\begin{equation}\label{olr-n1}
d V(t_0,x_0)(-1,-v_0)\leq L(t_0,x_0,v_0).
\end{equation}
\end{Th}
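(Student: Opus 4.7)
The plan is to exploit the dynamic programming principle for $V$ together with a backward trajectory of the inclusion $Q$ emanating from $(t_0,x_0,V(t_0,x_0))$ with the prescribed initial velocity $(v_0,-L(t_0,x_0,v_0))$. The backward trajectory supplies a ``test curve'' along which we can control the increment of $V$, while the definition of $dV(t_0,x_0)(-1,-v_0)$ requires only a liminf along some sequence, so one admissible sequence suffices.

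First, fix $(t_0,x_0)\in \D V\cap(0,T]\times\R^{\scriptscriptstyle N}$ and $v_0\in\D L(t_0,x_0,\cdot)$. Apply Lemma \ref{cdwt-01} with $u_0:=V(t_0,x_0)\in\R$ to obtain $\tau>0$ and a $\mathcal{C}^1$ curve $(x,u):[t_0-\tau,t_0]\to\R^{\scriptscriptstyle N}\times\R$ with $(x,u)(t_0)=(x_0,V(t_0,x_0))$, satisfying $(\dot x(t),\dot u(t))\in Q(t,x(t))$ on $[t_0-\tau,t_0]$ and with the one-sided derivative $(\dot x,\dot u)(t_0^-)=(v_0,-L(t_0,x_0,v_0))$. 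The inclusion into $Q$ gives in particular $-\dot u(t)\geq L(t,x(t),\dot x(t))$ for every $t\in[t_0-\tau,t_0]$.

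Second, invoke Theorem \ref{vfgr}(b) to pick an optimal trajectory $\bar x^\ast(\cdot)\in\mathcal{A}([t_0,T],\R^{\scriptscriptstyle N})$ with $\bar x^\ast(t_0)=x_0$ and $\Gamma[\bar x^\ast]=V(t_0,x_0)$. For $h\in(0,\tau]$ concatenate $x(\cdot)$ on $[t_0-h,t_0]$ with $\bar x^\ast(\cdot)$ on $[t_0,T]$; this is admissible from $(t_0-h,x(t_0-h))$, so the definition of $V$ and the bound on $L(s,x(s),\dot x(s))$ above yield
\begin{equation*}
V(t_0-h,x(t_0-h))\;\leq\;V(t_0,x_0)+\!\int_{t_0-h}^{t_0}\!L(s,x(s),\dot x(s))\,ds\;\leq\;V(t_0,x_0)+u(t_0-h)-u(t_0).
\end{equation*}
Since $u(t_0)=V(t_0,x_0)$, rearrangement gives $V(t_0-h,x(t_0-h))-V(t_0,x_0)\leq u(t_0-h)-u(t_0)$.

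Finally, take any $h_n\to 0+$ and set $y_n:=(-1,(x(t_0-h_n)-x_0)/h_n)$. Because $\dot x(t_0^-)=v_0$ we have $y_n\to(-1,-v_0)$, and $(t_0-h_n,x(t_0-h_n))=(t_0,x_0)+h_n\,y_n$. Hence, directly from the definition of the subderivative,
\begin{equation*}
dV(t_0,x_0)(-1,-v_0)\;\leq\;\liminf_{n\to\infty}\frac{V(t_0-h_n,x(t_0-h_n))-V(t_0,x_0)}{h_n}\;\leq\;\lim_{n\to\infty}\frac{u(t_0-h_n)-u(t_0)}{h_n},
\end{equation*}
and the last limit equals $-\dot u(t_0^-)=L(t_0,x_0,v_0)$, which is \eqref{olr-n1}.

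The only point that requires some care is the concatenation step: one must ensure that the admissibility of $\bar x^\ast$ on $[t_0,T]$ combined with the backward piece $x$ on $[t_0-h,t_0]$ produces a genuinely absolutely continuous curve so that the defining infimum for $V(t_0-h,x(t_0-h))$ can be applied. This is immediate since both pieces are absolutely continuous and match at $t_0$. Everything else is a matter of chasing constants through the definitions; the use of Lemma \ref{cdwt-01} is what makes the argument go through even though $Q$ need not admit a continuous single-valued selection with a prescribed value at a given point in the general unbounded-valued setting.
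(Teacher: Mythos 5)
Your proposal is correct and follows essentially the same route as the paper: a backward $\mathcal{C}^1$ arc from Lemma \ref{cdwt-01} with one-sided derivative $(v_0,-L(t_0,x_0,v_0))$, concatenated with an optimal forward trajectory from Theorem \ref{vfgr}(b), and the resulting bound $V(t_0-h,x(t_0-h))-V(t_0,x_0)\leq u(t_0-h)-u(t_0)$ fed into the definition of the subderivative. The paper phrases the key inequality as $\bar{u}(t)\geq V(t,\bar{x}(t))$ along the whole concatenated curve rather than via the dynamic programming inequality at $t_0-h$, but this is the same estimate.
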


\begin{proof}
Fix $(t_0,x_0)\in\D V\cap (0,T]\times\R^{\scriptscriptstyle N}$ and  $v_0\in\D L(t_0,x_0,\cdot)$.
In view of Lemma \ref{cdwt-01} there exist $\tau>0$ and a function $(x_1,u_1):[t_0-\tau,t_0]\to\R^{\scriptscriptstyle N}\times\R$ of class $C^1$ with $(x_1,u_1)(t_0)=(x_0,V(t_0,x_0))$ which satisfies  $(\dot{x}_1(t),\dot{u}_1(t))\in Q(t,x_1(t))$ for all $t\in[t_0-\tau,t_0]$ and $(\dot{x}_1,\dot{u}_1)(t_0^-)=(v_0,-L(t_0,x_0,v_0))$.
Let $0<h_n<\tau$ and $h_n\rightarrow 0$. Then, we choose a sequence $v_n$ such that $x_1(t_0-h_n)=x_0-h_nv_n$ and $v_n\rightarrow v_0$. By Theorem \ref{vfgr} (b) there exists  $x_2(\cdot)\in\mathcal{A}([t_0,T],\R^{\scriptscriptstyle N})$ such that $V(t_0,x_0)=\Gamma[x_2(\cdot)]$ and $x_2(t_0)=x_0$.  We define an absolutely continuous function $u_2:[t_0,T]\to\R$ by the formula
\begin{equation*}
u_2(t):=g(x_2(T))+\int_t^TL(s,x_2(s),\dot{x}_2(s))\;d\!s.
\end{equation*}
Then, we can define an absolutely continuous function  $(\bar{x},\bar{u})(\cdot)$ on $[t_0-\tau,T]$ by 
\begin{equation*}
(\bar{x},\bar{u})(t):=\left\{
\begin{array}{ccl}
(x_1,u_1)(t) & \tn{if} & t\in[t_0-\tau,t_0], \\[1mm]
(x_2,u_2)(t) & \tn{if} & t\in[t_0,T].
\end{array}
\right.
\end{equation*}
Observe that $\bar{u}(t_0)=V(t_0,x_0)$, $\bar{u}(T)=g(\bar{x}(T))$ and $(\dot{\bar{x}},\dot{\bar{u}})(t)\in Q(t,\bar{x}(t))$ for a.e. $t\in[t_0-\tau,T]$. Hence, for all $t\in[t_0-\tau,T]$, we have
$$\bar{u}(t)\,=\,\bar{u}(T)+\int_t^T\!\!-\dot{\bar{u}}(s)\;d\!s\,\geq\, g(\bar{x}(T))+\int_t^TL(s,\bar{x}(s),\dot{\bar{x}}(s))\;d\!s\,\geq\, V(t,\bar{x}(t)).$$
Since $V(t,\bar{x}(t))\leq\bar{u}(t)$ for all $t\in[t_0-\tau,T]$ and $V(t_0,x_0)=\bar{u}(t_0)$, we obtain
\begin{eqnarray*}
d V(t_0,x_0)(-1,-v_0) &\leq & \liminf_{n\to\infty}\frac{V((t_0,x_0)+h_n(-1,-v_n))-V(t_0,x_0)}{h_n}\\
 &= & \liminf_{n\to\infty}\frac{V(t_0-h_n,x_1(t_0-h_n))-V(t_0,x_0)}{h_n}\\
 &= & \liminf_{n\to\infty}\frac{V(t_0-h_n,\bar{x}(t_0-h_n))-V(t_0,x_0)}{h_n}\\
 &\leq & \lim_{n\to\infty}\frac{\bar{u}(t_0-h_n)-\bar{u}(t_0)}{h_n}\\
 &=& \lim_{n\to\infty}\frac{u_1(t_0-h_n)-u_1(t_0)}{h_n}\\ 
&= & -\dot{u}_1(t_0^-)\;\;=\;\; L(t_0,x_0,v_0),
\end{eqnarray*}
which completes the proof.
\end{proof}

\begin{Th}[Existence]\label{tw-ist-hjb}
Assume that $H$ satisfies \tn{(H1)-(H4)} and $g$ is proper and lower semicontinuous.
If $V$ is the value function associated with $H^{\ast}$ and $g$, then $V$ is  a lower semicontinuous solution of \eqref{rowhj}. 
\end{Th}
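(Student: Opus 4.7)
The plan is to verify each of the four requirements in Definition \ref{lsc-solutions} separately. The terminal condition $V(T,x)=g(x)$ and the lower semicontinuity of $V$ are given directly by Theorem \ref{vfgr}(a), since the Lagrangian $L=H^{\ast}$ satisfies (L1)-(L5) by Proposition \ref{prop2-fmw}. It remains to establish the two subdifferential inequalities \eqref{lsc-solutions-1} and \eqref{lsc-solutions-2}, and these will follow from Theorems \ref{lfm-Prop} and \ref{olr-P1} respectively, combined with the Fenchel-Young formula for $H$ and $H^{\ast}$.

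First I would verify the supersolution inequality \eqref{lsc-solutions-1}. Fix $(t_0,x_0)\in\D V\cap[0,T)\times\R^{\scriptscriptstyle N}$ and $(p_t,p_x)\in\partial V(t_0,x_0)$. By Theorem \ref{lfm-Prop} there exists $v_0\in\R^{\scriptscriptstyle N}$ with $dV(t_0,x_0)(1,v_0)\leq -L(t_0,x_0,v_0)$. The definition of the subdifferential yields $p_t+\langle v_0,p_x\rangle\leq dV(t_0,x_0)(1,v_0)\leq -H^{\ast}(t_0,x_0,v_0)$. Since $H(t_0,x_0,\cdot)$ is the Legendre-Fenchel conjugate of $H^{\ast}(t_0,x_0,\cdot)$ (note $v_0\in\D L(t_0,x_0,\cdot)$ as otherwise the right-hand side would be $-\infty$ contradicting $(p_t,p_x)\in\partial V(t_0,x_0)\subset\R^{\scriptscriptstyle N+1}$), the Fenchel inequality gives $H(t_0,x_0,-p_x)\geq\langle v_0,-p_x\rangle-H^{\ast}(t_0,x_0,v_0)$. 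Combining the two estimates yields $p_t\leq -\langle v_0,p_x\rangle-H^{\ast}(t_0,x_0,v_0)\leq H(t_0,x_0,-p_x)$, which is exactly \eqref{lsc-solutions-1}.

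Next I would handle the subsolution inequality \eqref{lsc-solutions-2}. Fix $(t_0,x_0)\in\D V\cap(0,T]\times\R^{\scriptscriptstyle N}$ and $(p_t,p_x)\in\partial V(t_0,x_0)$. By Theorem \ref{olr-P1}, for every $v_0\in\D L(t_0,x_0,\cdot)$ we have $dV(t_0,x_0)(-1,-v_0)\leq H^{\ast}(t_0,x_0,v_0)$, so the subdifferential inequality gives $-p_t-\langle v_0,p_x\rangle\leq H^{\ast}(t_0,x_0,v_0)$, i.e. $-p_t\leq\langle v_0,p_x\rangle+H^{\ast}(t_0,x_0,v_0)$ for every $v_0\in\D L(t_0,x_0,\cdot)$. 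Taking the infimum over $v_0$ and using that $H^{\ast}(t_0,x_0,v)=+\infty$ off the effective domain, we obtain
\begin{equation*}
-p_t\;\leq\;\inf_{v\in\R^{\scriptscriptstyle N}}\bigl\{\,\langle v,p_x\rangle+H^{\ast}(t_0,x_0,v)\,\bigr\}\;=\;-\sup_{v\in\R^{\scriptscriptstyle N}}\bigl\{\,\langle v,-p_x\rangle-H^{\ast}(t_0,x_0,v)\,\bigr\}\;=\;-H(t_0,x_0,-p_x),
\end{equation*}
which rearranges to \eqref{lsc-solutions-2}.

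The only real subtlety is the reduction from subderivatives to subdifferentials via $\langle w,q\rangle\leq d\varphi(z)(w)$ for all $q\in\partial\varphi(z)$ and from there to the Hamiltonian through the Legendre-Fenchel duality; neither requires (H5). Because Theorems \ref{lfm-Prop} and \ref{olr-P1} already isolate the delicate analytic content (existence of optimal trajectories and the tangential behavior along them at the endpoints), the present statement reduces to these purely convex-analytic manipulations, which is why (H5) does not appear in the existence theorem although it is needed for uniqueness in Section \ref{section-5} below.
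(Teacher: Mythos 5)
Your proposal is correct and follows essentially the same route as the paper: both verify \eqref{lsc-solutions-1} via Theorem \ref{lfm-Prop} and \eqref{lsc-solutions-2} via Theorem \ref{olr-P1}, passing from the subderivative bounds to the subdifferential inequality and then to the Hamiltonian through Legendre--Fenchel duality. Your extra remarks (the terminal condition and lower semicontinuity from Theorem \ref{vfgr}(a), and the observation that $v_0\in\D L(t_0,x_0,\cdot)$ is forced) are correct details the paper leaves implicit.
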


\begin{proof}
Let $L(t,x,\cdot\,)=H^{\ast}(t,x,\cdot\,)$. In view of Proposition \ref{prop2-fmw} $L$ satisfies (L1)-(L5).

Fix $(t,x)\in\D V\cap [0,T)\times\R^{\scriptscriptstyle N}$. Then by Theorem \ref{lfm-Prop} there exists  $\bar{v}\in\R^{\scriptscriptstyle N}$ such that
\begin{equation}\label{tw-ist-hjb-1}
d V(t,x)(1,\bar{v})\,\leq\, -L(t,x,\bar{v}).
\end{equation}
Let  $(p_t,p_x)\in\partial V(t,x)$. Then, by the definition of the subdifferential,  we deduce that
\begin{equation}\label{tw-ist-hjb-2}
\langle\,(p_t,p_x),\,(1,\bar{v})\,\rangle\,\leq\, d V(t,x)(1,\bar{v}).
\end{equation}
Combining inequalities  \eqref{tw-ist-hjb-1} and \eqref{tw-ist-hjb-2} we obtain that
$$p_t+\langle p_x,\bar{v}\rangle\,\leq\, -L(t,x,\bar{v}).$$
The latter inequality, together with $H(t,x,\cdot\,)=L^{\ast}(t,x,\cdot\,)$, implies that
$$-p_t+H(t,x,-p_x)\,\geq\,-p_t+\langle -p_x,\bar{v}\rangle-L(t,x,\bar{v})\,\geq\,0.$$ 
Therefore the inequality \eqref{lsc-solutions-1} holds.

Fix $(t,x)\in\D V\cap (0,T]\times\R^{\scriptscriptstyle N}$. Then, by Theorem \ref{olr-P1}, we have
\begin{equation}\label{tw-ist-hjb-3}
\forall\,v\in\D L(t,x,\cdot),\quad d V(t,x)(-1,-v)\,\leq\, L(t,x,v).
\end{equation}
Let  $(p_t,p_x)\in\partial V(t,x)$. Then, by the definition of the subdifferential, we deduce that
\begin{equation}\label{tw-ist-hjb-4}
\forall\,v\in\D L(t,x,\cdot),\quad\langle(p_t,p_x),(-1,-v)\rangle\leq d V(t,x)(-1,-v).
\end{equation}
Combining inequalities \eqref{tw-ist-hjb-3} and \eqref{tw-ist-hjb-4}  we obtain
$$\forall\,v\in\D L(t,x,\cdot),\quad -p_t-\langle p_x,v\rangle\leq L(t,x,v).$$
The latter inequality, together with $H(t,x,\cdot\,)=L^{\ast}(t,x,\cdot\,)$, implies that
$$-p_t+H(t,x,-p_x)= -p_t+\sup\nolimits_{\;v\in\R^{\scriptscriptstyle N}}\{\langle -p_x,v\rangle-L(t,x,v)\}\leq 0.$$ 
Therefore the inequality \eqref{lsc-solutions-2} holds.
\end{proof}

\subsection{Uniqueness of Lower Semicontinuous Solutions}\label{uls}

\begin{Prop}\label{ws-11}
Let $U$ be a proper and lower semicontinuous function. Assume that $H$ satisfies \tn{(H1)-(H2)}. If for every $(t,x)\in \D U\cap (0,T]\times\R^{\scriptscriptstyle N}$ and every $(p_{t},p_{x})\in\partial U(t,x)$ the inequality \eqref{lsc-solutions-2} holds, then the condition \eqref{mwn} also holds. 
\end{Prop}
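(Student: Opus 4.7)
The plan is to reduce the condition \eqref{mwn} to the hypothesis \eqref{lsc-solutions-2} by homogenizing, and to handle the horizontal case $n^u=0$ via Rockafellar's approximation lemma. Fix $(t,x)\in \D U\cap (0,T]\times\R^{\scriptscriptstyle N}$, $(n^t,n^x,n^u)\in N_{\E U}(t,x,U(t,x))$ and $v\in\D L(t,x,\cdot)$. Recall from Subsection~\ref{nap} that necessarily $n^u\leq 0$, and that $p\in\partial U(t,x)$ iff $(p,-1)\in N_{\E U}(t,x,U(t,x))$.

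First I would treat the case $n^u<0$. By positive homogeneity of the normal cone, $(-n^t/n^u,-n^x/n^u,-1)\in N_{\E U}(t,x,U(t,x))$, hence $(p_t,p_x):=(-n^t/n^u,-n^x/n^u)\in\partial U(t,x)$. The hypothesis \eqref{lsc-solutions-2} then gives $H(t,x,-p_x)\leq p_t$. Since $L(t,x,\cdot)=H^{\ast}(t,x,\cdot)$, the Fenchel--Young inequality yields, for the fixed $v$,
\begin{equation*}
\langle -p_x,v\rangle-L(t,x,v)\;\leq\; H(t,x,-p_x)\;\leq\; p_t,
\end{equation*}
i.e.\ $p_t+\langle p_x,v\rangle+L(t,x,v)\geq 0$. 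Multiplying by $-n^u>0$ and substituting back gives exactly $n^t+\langle v,n^x\rangle-n^u L(t,x,v)\geq 0$.

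For the boundary case $n^u=0$ I would invoke Lemma~\ref{lemrlsc} applied to the proper lower semicontinuous function $U$: there exist $(t_k,x_k)\to(t,x)$, $(n^t_k,n^x_k)\to(n^t,n^x)$ and $n^u_k\to 0$ with $n^u_k<0$, $U(t_k,x_k)\to U(t,x)$, and $(n^t_k,n^x_k,n^u_k)\in N_{\E U}(t_k,x_k,U(t_k,x_k))$. Since $t>0$ and $U(t,x)$ is finite, for large $k$ one has $t_k>0$ and $(t_k,x_k)\in\D U$. Next, by property (L3) of Proposition~\ref{prop2-fmw}, since $v\in\D L(t,x,\cdot)$ and $(t_k,x_k)\to(t,x)$, there exist $v_k\to v$ with $L(t_k,x_k,v_k)\to L(t,x,v)\in\R$; in particular $v_k\in\D L(t_k,x_k,\cdot)$ for large $k$. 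Applying the case already proved at $(t_k,x_k)$ with the normal $(n^t_k,n^x_k,n^u_k)$ and the velocity $v_k$ gives
\begin{equation*}
n^t_k+\langle v_k,n^x_k\rangle-n^u_k L(t_k,x_k,v_k)\;\geq\; 0.
\end{equation*}
Passing to the limit, using $n^u_k\to 0$ and the convergence $L(t_k,x_k,v_k)\to L(t,x,v)\in\R$, produces $n^t+\langle v,n^x\rangle\geq 0$, which equals $n^t+\langle v,n^x\rangle-n^u L(t,x,v)$ since $n^u=0$.

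The main obstacle is the horizontal case $n^u=0$: one cannot directly normalize the normal to recover an element of $\partial U(t,x)$. Overcoming it requires both Rockafellar's density result (Lemma~\ref{lemrlsc}), which supplies nearby ``non-horizontal'' normals, and the refined lower semicontinuity (L3) of $L$, which provides a sequence $v_k$ along which the Lagrangian values actually converge rather than merely satisfy the wrong-sign inequality from plain lower semicontinuity. Everything else is bookkeeping with the Fenchel--Young inequality.
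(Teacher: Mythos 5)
Your proof is correct and follows essentially the same route as the paper's: normalize the normal vector when $n^u<0$ to obtain a subdifferential element and apply \eqref{lsc-solutions-2} together with the Fenchel--Young inequality, then handle $n^u=0$ via Lemma~\ref{lemrlsc} combined with property (L3) to pass to the limit. The only cosmetic difference is that you invoke the already-proved case at the approximating points, whereas the paper rederives the inequality there; the substance is identical.
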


\begin{proof}
Let $L(t,x,\cdot\,)=H^{\ast}(t,x,\cdot\,)$. In view of Proposition \ref{prop2-fmw} the Lagrangian $L$ satisfies (L1)-(L3). Let us fix $(t,x)\in\D U\cap (0,T]\times\R^{\scriptscriptstyle N}$.

Let  $(n^t\!,n^x\!\!,n^u)\!\in\! N_{\E U}(t,x,U(t,x))$ and $n^u\!<\!0$. Then, by the definition of the normal cone, 
$$\left(n^t/|n^u|,n^x/|n^u|,-1\right)\in N_{\E U}(t,x,U(t,x)).$$
Due to relation between normal cones and subdifferentials, from Subsection \ref{nap}, we get
$$(n^t/|n^u|,n^x/|n^u|)\in\partial U(t,x).$$ 
The latter, together with \eqref{lsc-solutions-2}, implies that
$$-n^t/|n^u|+H(t,x,-n^x/|n^u|)\,\leq\, 0.$$
Since $H(t,x,\cdot\,)=L^{\ast}(t,x,\cdot\,)$, for all   $v\in \D L(t,x,\cdot)$ we have 
$$-n^t/|n^u|+\langle\, -n^x/|n^u|,v\, \rangle -L(t,x,,v)\,\leq\, 0. $$
By multiplying both sides of the above inequality by $-|n^u|$, we get
$$n^t+\langle n^x,v \rangle -n^uL(t,x,v)\,\geq\, 0, $$
which completes the proof of \eqref{mwn} in this case.

Let $(n^t,n^x,0)\in N_{\E U}(t,x,U(t,x))$. In view of Lemma \ref{lemrlsc} there exist $(t_k,x_k)\rightarrow (t,x)$\linebreak  and $(n_k^t,n_k^x,n_k^u)\rightarrow(n^t,n^x,0)$ satisfying $n_k^u<0$ and
$(n_k^t,n_k^x,n_k^u)\in N_{\E U}(t_k,x_k,U(t_k,x_k))$\linebreak for all $k\in\N$. Then, by the definition of a normal cone, for all $k\in\N$ we have
$$\left(n_k^t/|n_k^u|,\,n_k^x/|n_k^u|,\,-1\right)\in N_{\E U}(t_k,x_k,U(t_k,x_k)).$$ 
Due to relation between normal cones and subdifferentials, from Subsection \ref{nap}, we get 
$$(n_k^t/|n_k^u|,n_k^x/|n_k^u|)\in \partial U(t_k,x_k),$$ 
for all $k\in\N$. The latter, together with \eqref{lsc-solutions-2}, implies that, for all large $k\in\N$,
$$-n_k^t/|n_k^u|+H(t_k,x_k,-n_k^x/|n_k^u|)\,\leq\, 0.$$
Since $H(t,x,\cdot\,)=L^{\ast}(t,x,\cdot\,)$, we have, for all   $v\in \D L(t_k,x_k,\cdot)$ and all large $k\in\N$,
$$-n_k^t/|n_k^u|+\langle -n_k^x/|n_k^u|,v\, \rangle -L(t_k,x_k,v)\,\leq\, 0.$$
By multiplying both sides of the above inequality by $-|n_k^u|$, we get
\begin{equation}\label{r4n0}
n_k^t+\langle\, n_k^x,v\, \rangle -n_k^uL(t_k,x_k,v)\,\geq\, 0, 
\end{equation}
for all   $v\in \D L(t_k,x_k,\cdot)$ and all large $k\in\N$. Let us fix $\bar{v}\in\D L(t,x,\cdot)$. By (L3)  there exists $v_k\to\bar{v}$ such that $L(t_k,x_k,v_k)\to L(t,x,\bar{v})$. Hence $v_k\in\D L(t_k,x_k,\cdot)$  for all large $k\in\N$. If we set $v:=v_k$ in the inequality \eqref{r4n0} and pass to the limit as $k\to\infty$, then,
$$n^t+\langle\, n^x,\bar{v}\, \rangle\,\geq \,0,$$
which completes the proof of \eqref{mwn}. 
\end{proof}

\begin{Prop}\label{flw1}
Let $U$ be a proper and lower semicontinuous function. Assume that $H$ satisfies \tn{(H1)-(H3)}. If for every $(t,x)\in \D U\cap [0,T)\times\R^{\scriptscriptstyle N}$ and every $(p_{t},p_{x})\in\partial U(t,x)$ the inequality \eqref{lsc-solutions-1} holds, then the condition \eqref{swn} also holds.
\end{Prop}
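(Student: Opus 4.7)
The plan is to mirror the two-case structure of Proposition~\ref{ws-11}, reusing the link between $\partial U$ and $N_{\E U}$, but converting the hypothesis \eqref{lsc-solutions-1} via the supremum formulation $H(t,x,\cdot)=L^{\ast}(t,x,\cdot)$ in order to \emph{extract} the approximating velocities $v_k$ demanded by \eqref{swn}.

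First I would handle the easy case $n^u<0$. Dividing the normal vector by $|n^u|$ gives $(n^t/|n^u|,n^x/|n^u|,-1)\in N_{\E U}(t,x,U(t,x))$, so $(n^t/|n^u|,n^x/|n^u|)\in\partial U(t,x)$; \eqref{lsc-solutions-1} then yields $H(t,x,-n^x/|n^u|)\geq n^t/|n^u|$. Since $H(t,x,\cdot)=L^{\ast}(t,x,\cdot)$, for each $k\in\N$ the definition of supremum produces some $v_k\in\D L(t,x,\cdot)$ with $\langle -n^x/|n^u|,v_k\rangle-L(t,x,v_k)>n^t/|n^u|-1/k$; multiplying by $|n^u|$ and choosing the constant sequence $(t_k,x_k)=(t,x)$ together with $\alpha_k=|n^u|/k$ delivers the required inequality.

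The delicate case is $n^u=0$. Here I would invoke Lemma~\ref{lemrlsc} to obtain $(t_k,x_k)\to(t,x)$ and $(n^t_k,n^x_k,n^u_k)\to(n^t,n^x,0)$ with $n^u_k<0$ and $(n^t_k,n^x_k,n^u_k)\in N_{\E U}(t_k,x_k,U(t_k,x_k))$; since $t<T$ one automatically has $t_k\in[0,T)$ for large $k$. Applying the previous case at each index $k$ (with accuracy $1/k$) yields $v_k\in\D L(t_k,x_k,\cdot)$ satisfying
\begin{equation*}
n^t_k+\langle n^x_k,v_k\rangle-n^u_k L(t_k,x_k,v_k)\leq |n^u_k|/k.
\end{equation*}
Because $n^u=0$, the target quantity in \eqref{swn} decomposes as
\begin{align*}
n^t+\langle n^x,v_k\rangle &= (n^t-n^t_k)+\langle n^x-n^x_k,v_k\rangle+n^u_k L(t_k,x_k,v_k)\\
&\quad +\bigl[n^t_k+\langle n^x_k,v_k\rangle-n^u_k L(t_k,x_k,v_k)\bigr].
\end{align*}

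The main obstacle is showing each of the four summands on the right-hand side can be dominated by a vanishing $\alpha_k$. The key technical input is (H3), which via Proposition~\ref{prop2-fmw} implies (L4); choosing $R>\sup_k|x_k|$ forces $\D L(t_k,x_k,\cdot)\subset\B_{C_R}$, so $|v_k|\leq C_R$ uniformly and therefore $\langle n^x-n^x_k,v_k\rangle\to 0$. Moreover, on the compact set $[0,T]\times\B_R\times\B_{C_R}$ the proper lower semicontinuous function $L$ attains a finite minimum $-D$; since $n^u_k<0$, this gives $n^u_k L(t_k,x_k,v_k)\leq|n^u_k|D\to 0$. The bracketed term is at most $|n^u_k|/k\to 0$ and $n^t-n^t_k\to 0$, so the sum of these four upper bounds defines an $\alpha_k\to 0$ for which \eqref{swn} holds.
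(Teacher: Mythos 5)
Your proof is correct and follows essentially the same route as the paper: the same two-case split on the sign of $n^u$, the same use of Lemma~\ref{lemrlsc} when $n^u=0$, and the same appeal to (L4) (for the uniform bound $|v_k|\leq C_R$) and to the lower bound $L\geq -D$ on a compact set to control $\langle n^x-n^x_k,v_k\rangle$ and $n^u_kL(t_k,x_k,v_k)$. The only difference is cosmetic: you extract $v_k$ as a $1/k$-approximate maximizer of $\langle -n^x/|n^u|,\cdot\rangle-L(t,x,\cdot)$ and absorb the extra $|n^u_k|/k$ into $\alpha_k$, whereas the paper uses (L4) to justify taking an exact maximizer.
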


\begin{proof} Let $L(t,x,\cdot\,)=H^{\ast}(t,x,\cdot\,)$. In view of Proposition \ref{prop2-fmw} the Lagrangian $L$ satisfies (L1)-(L4). Let us fix $(t,x)\in\D U\cap [0,T)\times\R^{\scriptscriptstyle N}$.

Let  $(n^t\!,n^x\!\!,n^u)\!\in\! N_{\E U}(t,x,U(t,x))$ and $n^u\!<\!0$.  Then, by the definition of the normal cone,
$$\left(n^t/|n^u|,\,n^x/|n^u|,\,-1\right)\in N_{\E U}(t,x,U(t,x)).$$
Due to relation between normal cones and subdifferentials, from Subsection \ref{nap}, we get
$$(n^t/|n^u|,n^x/|n^u|)\in\partial U(t,x).$$ 
The latter, together with \eqref{lsc-solutions-1}, implies that
$$-n^t/|n^u|+H(t,x,-n^x/|n^u|)\,\geq\, 0.$$
Since $H(t,x,\cdot\,)=L^{\ast}(t,x,\cdot\,)$ and (L4) holds, there exists $v\in \D L(t,x,\cdot)$ such that 
$$-n^t/|n^u|+\langle -n^x/|n^u|,v\, \rangle -L(t,x,v)\,\geq\, 0. $$
By multiplying both sides of the above inequality by $-|n^u|$, we have
$$n^t+\langle n^x,v \rangle -n^uL(t,x,U(t,x),v)\leq 0.$$
If we set $t_k:=t$, $x_k:=x$, $v_k:=v$, $\alpha_k:=0$, then we obtain \eqref{swn} in this case.

Let $(n^t,n^x,0)\in N_{\E U}(t,x,U(t,x))$. In view of Lemma \ref{lemrlsc} there exist $(t_k,x_k)\rightarrow (t,x)$\linebreak  and $(n_k^t,n_k^x,n_k^u)\rightarrow(n^t,n^x,0)$ satisfying $n_k^u<0$ and
$(n_k^t,n_k^x,n_k^u)\in N_{\E U}(t_k,x_k,U(t_k,x_k))$\linebreak for all $k\in\N$. Then, by the definition of a normal cone, for all $k\in\N$ we have
$$\left(n_k^t/|n_k^u|,\,n_k^x/|n_k^u|,\,-1\right)\in N_{\E U}(t_k,x_k,U(t_k,x_k)).$$ 
Due to relation between normal cones and subdifferentials, from Subsection \ref{nap}, we get 
$$(n_k^t/|n_k^u|,n_k^x/|n_k^u|)\in \partial U(t_k,x_k),$$ 
for all $k\in\N$. The latter, together with \eqref{lsc-solutions-1}, implies that, for all large $k\in\N$,
$$-n_k^t/|n_k^u|+H(t_k,x_k,-n_k^x/|n_k^u|)\,\geq\, 0.$$
Since $H(t,x,\cdot\,)=L^{\ast}(t,x,\cdot\,)$ and (L4) holds, there exists $v_k\in \D L(t_k,x_k,\cdot)$ such that
$$-n_k^t/|n_k^u|+\langle -n_k^x/|n_k^u|,v_k \rangle -L(t_k,x_k,v_k)\,\geq\, 0. $$
By multiplying both sides of the above inequality by $-|n_k^u|$, we get
\begin{equation}\label{snode}
n_k^t+\langle n_k^x,v_k \rangle +|n_k^u|\,L(t_k,x_k,v_k)\,\leq\, 0.
\end{equation}
Let $R:=\sup\{|x_k|\mid k\in\N\}$. Then, by (L4), we obtain $|v_k|\leq C_R$ for all large $k\in\N$.
Since $L$ is proper and lower semicontinuous (due to (L1)-(L2)), there exists a constant $D>0$ such that $L(t,x,v)\geq -D$ for all $t\in[0,T]$, $x\in\B_R$, $v\in\B_{C_R}$. Hence $L(t_k,x_k,v_k)\geq-D$ for all $k\in\N$. The latter, together with \eqref{snode}, implies that, for all large $k\in\N$,
$$n_k^t+\langle\, n_k^x,v_k\, \rangle \,\leq\, -|n_k^u|\,L(t_k,x_k,v_k)\,\leq\,|n_k^u|\,D.$$
Set $a_k:=n^t-n_k^t$, $b_k:=n^x-n_k^x$, $c_k:=|n_k^u|\,D$. We observe that sequences $\{a_k\}$, $\{b_k\}$ and $\{c_k\}$ converge to $0$ as $k\to\infty$. Moreover we derive that
\begin{eqnarray*}
n^t+\langle v_k, n^x \rangle & = & a_k+n_k^t+\langle v_k,b_k+n_k^x \rangle\,=\,a_k+\langle v_k,b_k \rangle +n_k^t +\langle v_k,n_k^x \rangle\\
& \leq &  a_k+|v_k||b_k|+c_k\;\;\leq\;\;a_k+C_R|b_k|+c_k\;\;=:\alpha_k. 
\end{eqnarray*}
Obviously $\alpha_k\rightarrow 0$ as $k\to\infty$, which completes the proof of \eqref{swn}. 
\end{proof}

\begin{Th}[Uniqueness]\label{tw-jed-hjb}
Assume that $H$ satisfies \tn{(H1)-(H5)} and $g$ is proper and lower semicontinuous.
Let $V$ be the value function associated with $H^{\ast}$ and $g$. If $U$ is a lower semicontinuous solution of \eqref{rowhj}, then $U=V$ on $[0,T]\times\R^{\scriptscriptstyle N}$.
\end{Th}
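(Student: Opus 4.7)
The plan is to derive $U\geq V$ and $U\leq V$ separately by coupling the Invariance Theorem \ref{Twon} and the Viability Theorem \ref{Twi5} with the existence of an optimal trajectory for $V$ (Theorem \ref{vfgr}(b)), after first translating the viscosity inequalities \eqref{lsc-solutions-1}-\eqref{lsc-solutions-2} into the normal-cone conditions required by those theorems. Set $L:=H^{\ast}$; by Proposition \ref{prop2-fmw} and Theorem \ref{tw2_rlhmh} $L$ satisfies \tn{(L1)-(L6)}, so both theorems of Section \ref{section-4} apply. Since $U$ is a proper lower semicontinuous solution (properness comes from $U(T,\cdot)=g$ proper and $U$ not taking $-\infty$), Proposition \ref{ws-11} converts \eqref{lsc-solutions-2} into the condition \eqref{mwn} of the Invariance Theorem, and Proposition \ref{flw1} converts \eqref{lsc-solutions-1} into the new boundary condition \eqref{swn} of the Viability Theorem.

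\textbf{Step 1 ($U\geq V$).} I fix $(t_0,x_0)\in\D V\cap[0,T)\times\R^{\scriptscriptstyle N}$. By Theorem \ref{vfgr}(b) there is $\bar{x}(\cdot)\in\mathcal{A}([t_0,T],\R^{\scriptscriptstyle N})$ with $\bar{x}(t_0)=x_0$ and $V(t_0,x_0)=g(\bar{x}(T))+\int_{t_0}^T L(t,\bar{x}(t),\dot{\bar{x}}(t))\,dt$. Define the absolutely continuous function $\bar{u}(t):=g(\bar{x}(T))+\int_t^T L(s,\bar{x}(s),\dot{\bar{x}}(s))\,ds$. Then $\bar{u}(t_0)=V(t_0,x_0)$, $\bar{u}(T)=g(\bar{x}(T))=U(T,\bar{x}(T))$, and by construction $(\dot{\bar{x}}(t),\dot{\bar{u}}(t))=(\dot{\bar{x}}(t),-L(t,\bar{x}(t),\dot{\bar{x}}(t)))\in Q(t,\bar{x}(t))$ for a.e.\ $t\in[t_0,T]$. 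Theorem \ref{Twon} then gives $\bar{u}(t)\geq U(t,\bar{x}(t))$ on $[t_0,T]$, so in particular $V(t_0,x_0)=\bar{u}(t_0)\geq U(t_0,x_0)$. The cases $t_0=T$ and $V(t_0,x_0)=+\infty$ are immediate.

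\textbf{Step 2 ($U\leq V$).} I fix $(t_0,x_0)\in\D U\cap[0,T)\times\R^{\scriptscriptstyle N}$ and apply Theorem \ref{Twi5} to $U$ at $(t_0,x_0)$, obtaining an absolutely continuous $(x,u):[t_0,T]\to\R^{\scriptscriptstyle N+1}$ with $(x,u)(t_0)=(x_0,U(t_0,x_0))$, $(\dot{x},\dot{u})(t)\in Q(t,x(t))$ a.e., and $u(t)\geq U(t,x(t))$ on $[t_0,T]$. The inclusion in $Q$ is equivalent to $\dot{u}(t)\leq -L(t,x(t),\dot{x}(t))$ a.e.; integrating and using $u(T)\geq U(T,x(T))=g(x(T))$ produces
\[
U(t_0,x_0)=u(t_0)\geq u(T)+\int_{t_0}^T L(t,x(t),\dot{x}(t))\,dt\geq g(x(T))+\int_{t_0}^T L(t,x(t),\dot{x}(t))\,dt\geq V(t_0,x_0).
\]
Combined with Step 1 this gives $U=V$ on $\D U\cap[0,T)\times\R^{\scriptscriptstyle N}$, and hence everywhere: at $t_0=T$ both sides equal $g$, while at points where $U(t_0,x_0)=+\infty$ Step 1 forces $V(t_0,x_0)=+\infty$ as well.

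\textbf{Main obstacle.} The reduction above is short once the two Section \ref{section-4} theorems are available; the real difficulty, already handled in the paper, is that $\D L(t,x,\cdot)$ is typically unbounded, so the classical compact-valued viability/invariance machinery of Frankowska does not apply. Two innovations absorb this: on the one hand, the extra-property (P2) of the unbounded parametrization in Theorem \ref{th-oparam}, which makes $a(\cdot)=(\dot{x},\dot{u})(\cdot)$ automatically integrable in the proof of the Invariance Theorem; on the other, the \emph{new} boundary condition \eqref{swn}, which lower semicontinuous solutions do satisfy (Proposition \ref{flw1}) even though the classical pointwise condition \eqref{swnc} may fail, as shown by \eqref{expnun1}-\eqref{expnun2}. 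With those components in place, the uniqueness argument reduces to the trajectory comparison sketched above.
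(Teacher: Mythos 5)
Your proof is correct and follows essentially the same route as the paper: Proposition \ref{ws-11} plus the Invariance Theorem \ref{Twon} applied along an optimal trajectory from Theorem \ref{vfgr}(b) gives $V\geq U$ on $\D V$, and Proposition \ref{flw1} plus the Viability Theorem \ref{Twi5} gives $U\geq V$ on $\D U$, exactly as in the paper's argument. The only slip is cosmetic: the labels of your two steps are swapped (Step 1 actually establishes $U\leq V$ and Step 2 establishes $U\geq V$), but the inequalities you derive, and the handling of the $+\infty$ points, are the right ones.
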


\begin{proof}
Let $L(t,x,\cdot)\!=\!H^{\ast}(t,x,\cdot)$. By Proposition \ref{prop2-fmw} and Theorem \ref{tw2_rlhmh}  $L$ satisfies (L1)-(L6). 

We first show that $U\leq V$ on $[0,T]\times\R^{\scriptscriptstyle N}$. Obviously $U(T,x_0)=g(x_0)=V(T,x_0)$ for all $x_0\in\R^{\scriptscriptstyle N}$. Let us fix $(t_0,x_0)\in\D V\cap [0,T)\times\R^{\scriptscriptstyle N}$. In view of Theorem \ref{vfgr} (b) there exists $\bar{x}(\cdot)\in\mathcal{A}([t_0,T],\R^{\scriptscriptstyle N})$ such that $V(t_0,x_0)=\Gamma[\bar{x}(\cdot)]$ and $\bar{x}(t_0)=x_0$. 
We define an absolutely continuous function $\bar{u}:[t_0,T]\to\R$ by the formula
\begin{equation*}
\bar{u}(t):=g(\bar{x}(T))+\int_t^TL(s,\bar{x}(s),\dot{\bar{x}}(s))\;d\!s.
\end{equation*}
Observe that $\bar{u}(t_0)=V(t_0,x_0)$, $\bar{u}(T)=g(\bar{x}(T))$ and $\dot{\bar{u}}(t)=-L(t,\bar{x}(t),\dot{\bar{x}}(t))$ for a.e. $t\in[t_0,T]$. Hence $(\dot{\bar{x}},\dot{\bar{u}})(t)\in Q(t,\bar{x}(t))$  for a.e. $t\in[t_0,T]$ and $\bar{u}(T)=U(T,\bar{x}(T))$. By Proposition \ref{ws-11} the condition \eqref{mwn} holds. Thus we can use Theorem \ref{Twon} [Invariance Theorem]. In view of this theorem we have $\bar{u}(t)\geq U(t,\bar{x}(t))$ for all $t\in[t_0,T]$. In particular
$$V(t_0,x_0)\,=\,\bar{u}(t_0)\,\geq\, U(t_0,\bar{x}(t_0))\,=\,U(t_0,x_0).$$
Therefore $V(t_0,x_0)\geq U(t_0,x_0)$ for all $(t_0,x_0)\in[0,T]\times\R^{\scriptscriptstyle N}$.

Next, we show that  $U\geq V$ on $[0,T]\times\R^{\scriptscriptstyle N}$. Obviously $U(T,x_0)=g(x_0)=V(T,x_0)$ for all\linebreak $x_0\in\R^{\scriptscriptstyle N}$. Fix $(t_0,x_0)\in\D U\cap [0,T)\times\R^{\scriptscriptstyle N}$. By Proposition \ref{flw1} the condition \eqref{swn} holds. So we can use Theorem \ref{Twi5} [Viability Theorem]. In view of this theorem there exists an absolutely continuous function  $(x,u):[t_0,T]\rightarrow\R^{\scriptscriptstyle N}\times\R$ with $(x,u)(t_0)=(x_0,U(t_0,x_0))$ which satisfies  $(\dot{x},\dot{u})(t)\in Q(t,x(t))$ for a.e. $t\in[t_0,T]$ and $u(t)\geq U(t,x(t))$ for all $t\in[t_0,T]$. Hence
$u(T)\geq U(T,x(T))=g(x(T))$ and
$$U(t_0,x_0)\,=\,u(t_0)\,=\,u(T)+\int_{t_0}^T\!\!-\dot{u}(t)\;dt\,\geq\, g(x(T))+\int_{t_0}^TL(t,x(t),\dot{x}(t))\;dt\,\geq\, V(t_0,x_0).$$
Therefore $U(t_0,x_0)\geq V(t_0,x_0)$ for all $(t_0,x_0)\in[0,T]\times\R^{\scriptscriptstyle N}$.
\end{proof}

\end{document}